\definecolor{red}{rgb}{0.00,0.00,0.00}
{\numberwithin{equation}{section}
\setlength{\parindent}{1em}

\newtheorem{theorem}{Theorem}[section]
\newtheorem{lemma}{Lemma}[section]
\newtheorem{remark}{Remark}[section]

\newtheorem{corollary}{Corollary}[section]

\newcommand{\normmm}[1]{{\left\vert\kern-0.25ex\left\vert
\kern-0.25ex\left\vert #1
    \right\vert\kern-0.25ex\right\vert\kern-0.25ex\right\vert}}

\geometry{left=3cm,right=3cm,top=4cm,bottom=2.5cm}

\newcommand{\Red}[1]{\textcolor{red}{#1}}

\begin{document}           % End of preamble and beginning of text.

\title{Staggered DG method with small edges for \\Darcy flows in fractured porous media}
\author{Lina Zhao\footnotemark[1]\qquad
    \;Dohyun Kim\footnotemark[2]\qquad
    \;Eun-Jae Park\footnotemark[3]\qquad
    \;Eric Chung\footnotemark[4]}
\renewcommand{\thefootnote}{\fnsymbol{footnote}}
\footnotetext[1]{Department of Mathematics,The Chinese University of Hong Kong, Hong Kong SAR, China. ({lzhao@math.cuhk.edu.hk}).}
\footnotetext[2]{Department of Computational Science and Engineering, Yonsei University, Seoul 03722, Korea. ({kim92n@yonsei.ac.kr}).}
\footnotetext[3]{Department of Computational Science and Engineering, Yonsei University, Seoul 03722, Korea. ({ejpark@yonsei.ac.kr}).}
\footnotetext[4]{Department of Mathematics, The Chinese University of Hong Kong, Hong Kong SAR, China. ({tschung@math.cuhk.edu.hk}).}

\maketitle

\begin{abstract}
    In this paper, we present and analyze a staggered discontinuous Galerkin method for Darcy flows in fractured porous media on fairly general meshes.
    A staggered discontinuous Galerkin method and a standard conforming finite element method with appropriate inclusion of interface conditions are exploited for the bulk region and the fracture, respectively.
    Our current analysis weakens the usual assumption on the polygonal mesh, which can integrate more general meshes such as elements with arbitrarily small edges into our theoretical framework.
    We prove the optimal convergence estimates in $L^2$ error for all the variables by exploiting the Ritz projection.
    Importantly, our error estimates are shown to be fully robust with respect to the heterogeneity and anisotropy of the permeability coefficients.
    Several numerical experiments including meshes with small edges and anisotropic meshes are carried out to confirm the theoretical findings.
    Finally, our method is applied in the framework of unfitted mesh.
\end{abstract}

{\bf Key words}: Staggered DG methods, General meshes, Small edges, Unfitted meshes, Darcy flow, Fractured porous media, Anisotropic meshes, Trace inequality

\pagestyle{myheadings} \thispagestyle{plain}
\markboth{Zhao,Kim} {SDG method for Darcy flows in fractured porous media}

\section{Introduction}

Modeling flow in fractured porous media has drawn great attention in the past decades, being fundamental for addressing many environmental and energy problems, such as water resources management, isolation of radioactive waste and ground water contamination.
Given the wide applications of fractured model in practical applications, many advances has been made in the accomplishments of designing efficient  numerical methods for fractured porous media.
In \cite{Martin05}, a mixed finite element method is developed and error estimates are also proved.
Later, a mixed finite element method on non-matching grids is considered in \cite{DAngelo12}.
%In \cite{Boon18}, a robust method based on mixed finite element methods and mortar methods is designed, which can handle non-matching grids, and a priori error analysis with dependence on $h$ is given.
In \cite{Chave18}, a hybrid-high order method is analyzed on fairly general meshes.
The error estimates proposed therein show that the method is fully robust with respect to the heterogeneity of the permeability coefficients.
In \cite{Antonietti19}, a discontinuous Galerkin approximation for flows in fractured porous media on polytopal grids is analyzed, where optimal convergence estimates in mesh-dependent energy norm are derived on fairly general meshes possibly including elements with unbounded number of faces.
In addition to the aforementioned methods, we also mention other methods that have been developed for fractured porous media, see \cite{Lipnikov08, Angot09, Sandve12, Berrone13, Fumagalli13, Benedetto14, KBrenner16, AntoniettiMF16,chen2016,chen2017, KBrenner17, Delpra17,Boon18, Formaggia18}.

Staggered discontinuous Galerkin (DG) methods are initially introduced to solve wave propagation problems \cite{EricEngquistwave06,ChungWave}.
The salient features of staggered DG method make it desirable for practical applications and the applications to various partial differential equations important for both science and engineering have been considered in \cite{ChungKimWid13, EricCiarYu13, KimChungLee13, Cheung15, LeeKim16, ChungQiu17, ChungParkLina18, LinaParkconvection19}.
Recently, staggered DG methods have been successfully designed on fairly general meshes possibly including hanging nodes for Darcy law and the Stokes equations, respectively \cite{LinaPark, LinaParkShin}.
It is further developed with essential modifications to solve coupled Stokes and Darcy problem, and Brinkman problem \cite{LinaParkcoupledSD,LinaEricLam20}.
Staggered DG methods designed therein earn many desirable features, including:
1) It can be flexibly applied to fairly general meshes with possible inclusion of hanging nodes, and the handing nodes can be simply incorporated in the construction of the method;
2) superconvergence can be obtained, which can deliver one order high convergence with proper postprocessing scheme designed;
3) local mass conservations can be preserved, which is highly appreciated in the simulation of multiphase flow.
In addition, the mass matrix is block diagonal which is desirable when explicit time stepping schemes are used;
% In addition, block diagonal mass matrix is generated which is especially important for time dependent problem with explicit time stepping scheme;
4) no numerical flux or penalty term is needed in contrast to other DG methods.

The purpose of this paper is to develop and analyze staggered DG method for the coupled bulk-fracture model stemming from the modeling of flows in fractured porous media, allowing more general meshes such as elements with arbitrarily small edges.
% The flexibility of staggered DG method in handling fairly general meshes possibly including hanging nodes, and the preservation of physical properties indeed make it an attractive candidate for such kind of problems.
The flexibility of staggered DG method in handling fairly general meshes, and the preservation of physical properties indeed make it an attractive candidate for such kind of problems.
In this paper we propose a discretization which combines a staggered DG approximation for the problem in the bulk domains with a conforming finite element approximation on the fracture.
% The nature of staggered DG method is to approximate the vector and scalar variables by finite dimensional spaces with staggered continuity. %, where the degrees of freedom for the scalar variable correlate to the primal edges.
Unlike the strategies employed in \cite{DAngelo12,Chave18}, we impose the coupling conditions by replacing all the terms with respect to the jump and average of flux by the corresponding pressure term, which can compensate for the degrees of freedom for bulk pressure across the fracture.
% This special treatment is in line with peculiar nature of staggered DG method.
% The existence and uniqueness of the resulting system is proved.
% In addition, a rigorous error analysis is carried out.
The existence and uniqueness of the resulting system is proved and a rigorous error analysis is carried out.
% It is important that our convergence estimates bypass the shape regularity assumptions, which is actually more technical and it usually requires some special treatment.
% Our convergence estimates weaken the usual assumption on the polygonal mesh which requires some special treatment.
In particular, we prove the convergence estimates under weaker assumption on the polygonal mesh by exploiting some novel strategies.
Research in this direction has drawn great attention, see \cite{Beiraoda17,BrennerSung18,CaoChen18,Antonietti19,CaoChen19} for works considering general polygonal elements allowing arbitrarily small edges.
The primary difficulty arising from \textit{a priori} error estimates lies in the fact that $L^2$ error estimate for flux is coupled with energy error of fracture pressure, which will naturally lead to suboptimal convergence for $L^2$ error of flux.
To overcome this issue, we construct the Ritz projection for fracture pressure so that the term causing suboptimal convergence can vanish.
Moreover, we are able to show that the Ritz projection superconverges to numerical approximation of fracture pressure. Then without duality argument we can achieve optimal convergence for $L^2$ error of fracture pressure and bulk pressure, respectively.
It is noteworthy that our error estimates are shown to be fully robust with respect to the heterogeneity and anisotropy
of the permeability coefficients, which is desirable feature for fractured flow simulation.
%is to derive $L^2$ error estimates for flux due to the fact that it is coupled with energy error of fracture pressure, which will naturally lead to suboptimal convergence.
%To overcome this issue, we need to decouple flux $L^2$ error from energy error of fracture pressure.
%As such, we propose a discrete trace inequality (cf. Lemma~\ref{lemma:trace}), which can bound the edge term present in the error equations by discrete $H^1$-norm.
%Then with the help of $L^2$ error of fracture pressure, we are able to show the optimal convergence rates for flux $L^2$ error.
%Modify the following sentences
%To overcome this issue, we first derive an error estimate measured in suitable norm (cf. Theorem~\ref{thm:energyError}), where the flux $L^2$ error is sub-optimal.
%Then we exploit the estimates from Theorem~\ref{thm:energyError} and duality argument to show the $L^2$ errors of bulk pressure and fracture pressure, where optimal convergence estimates are obtained.
%To achieve the optimal convergence estimate for flux $L^2$ error, we also prove a discrete trace inequality, which can be combined with $L^2$ error of fracture pressure yielding the optimal convergence estimate of flux $L^2$ error.
The theoretical findings are verified by a series of numerical tests.
Especially, numerical tests indicate that our method is robust to anisotropy of meshes.
%Importantly, our method can handle fairly general meshes, thereby the primal elements in the bulk region can be cut by the fracture in a quite general way, more discussions in this direction can be found in \cite{Antonietti19}.
We emphasize that our method allows general meshes with arbitrarily small edges, thus it can be easily adapted to solve problem on unfitted grids.
In fact, we only need to update the interface elements by connecting the intersection points between background grids and fracture, thereby the resulting grids are again fitted with fracture and thus can be naturally embedded into our current framework.
% Therefore, in this paper we will present the heart of the novelty on the fitted fracture with background mesh to make the presentation clear.
Therefore, this paper focuses on the heart of the novelty on the fitted mesh to make the presentation clear.

The rest of this paper is organized as follows.
In the next section, we describe the model problem and formulate the staggered DG formulation for the bulk region coupled with standard conforming Galerkin formulation inside the fracture.
In addition, some fundamental ingredients are given in order to prove the \textit{a priori} error estimates.
In Section~\ref{sec:error}, \textit{a priori} error analysis is derived for bulk flux, bulk pressure and fracture pressure measured in $L^2$ error, where a discrete trace inequality is proved.
Then several numerical experiments are given in Section~\ref{sec:numerical} to confirm the theoretical findings, where various tests including elements with small edges and anisotropic meshes are demonstrated.
Finally, a conclusion is given.

\section{Description of staggered DG method}

In this section we first describe the governing equations modeling Darcy flows in fractured porous media.
Then staggered DG discretization is derived for the model problem under consideration.
% We end by introducing some technical results that are vital for the convergence estimates without shape regularity assumption.
Finally, we introduce some technical results that are vital for subsequent sections.

%straight line with condition number estimate
%curved fracture without condition number

\subsection{Model problem}
We consider a porous medium saturated by an incompressible fluid that occupies the space region $\Omega\subset \mathbb{R}^2$ and is crossed by a single fracture $\Gamma$.
Here, $\Omega_B:=\Omega\backslash \bar{\Gamma}$ represents the bulk region and can be decomposed as $\Omega_B:=\Omega_{B,1}\cup \Omega_{B,2}$.
In addition, we denote by $\partial \Omega_B:=\bigcup_{i=1}^2 \partial \Omega_{B,i}\backslash \bar{\Gamma}$ and denote by $\partial \Gamma$ the boundary of fracture $\Gamma$.
$\bm{n}_\Gamma$ denotes a unit normal vector to $\Gamma$ with a fixed orientation.
The schematic of the bulk and fracture domain is illustrated in Figure~\ref{fig:bulkdomain}.
Without loss of generality, we assume in the following that the subdomains are numbered so that $\bm{n}_\Gamma$ coincides with the outward normal direction of $\Omega_{B,1}$.

In the bulk region, we model the motion of the incompressible fluid by Darcy's law in mixed form, so that the pressure $p: \Omega_B\rightarrow \mathbb{R}$ and the flux $\bm{u}: \Omega_B \rightarrow \mathbb{R}^2$ satisfy
\begin{align}
    \bm{u}+K\nabla p    & =\bm{0}\quad \mbox{in}\;\Omega_B,\label{eq:bulk1} \\
    \nabla \cdot \bm{u} & =f\quad \mbox{in}\;\Omega_B,\label{eq:bulk2} \\
    p                   & =p_0\quad \mbox{on}\; \partial \Omega_B.
\end{align}
Here, $p_0\in H^{\frac{1}{2}}(\partial \Omega_B)$ the boundary pressure, and $K:\Omega_B \rightarrow \mathbb{R}^{2\times 2}$ the bulk permeability tensor, which is assumed to be a symmetric, piecewise constant.
Further, we assume that $K$ is uniformly elliptic so that there exist two strictly positive real numbers $K_1$ and $K_2$ satisfying for almost every $x\in \Omega_B$ and all $z\in \mathbb{R}^2$ such that $|\bm{z}|=1$
\begin{equation*}
    0<K_1\leq K(x) \bm{z}\cdot \bm{z}\leq K_2.
\end{equation*}

Inside the fracture, we consider the motion of the fluid as governed by Darcy's law in primal form, so that the fracture pressure $p_\Gamma: \Gamma\rightarrow \mathbb{R}$ satisfies
\begin{equation}
    \begin{aligned}
        -\nabla_t \cdot (K_\Gamma \nabla_t p_\Gamma)
         & =\ell_{\Gamma}f_\Gamma+[\bm{u}\cdot \bm{n}_\Gamma]
         &                                                    & \mbox{in}\; \Gamma,          \\
        p_\Gamma
         & = g_\Gamma
         &                                                    & \mbox{on}\; \partial \Gamma,
    \end{aligned}
    \label{eq:fracture}
\end{equation}
where $f_\Gamma\in L^2(\Gamma)$ and $K_\Gamma: = \kappa_\Gamma^*\ell_\Gamma$ with $\kappa_\Gamma^*: \Gamma\rightarrow \mathbb{R}$ and $\ell_\Gamma:\Gamma\rightarrow \mathbb{R}$ denoting the tangential permeability and thickness of the fracture, respectively.
The quantities $\kappa_\Gamma^*$ and $\ell_\Gamma$ are assumed to be piecewise constants.
Here, $\nabla_t\cdot$ and $\nabla_t$ denote the tangential divergence and gradient operators along $\Gamma$, respectively.
\Red{For the sake of simplicity, we assume $p_0=0$, $g_\Gamma=0$ in the analysis.}

The above problems are coupled by the following interface conditions
\begin{equation}
    \begin{aligned}
        \eta_\Gamma \{\bm{u}\cdot\bm{n}_\Gamma\} & =[p]            &  & \mbox{on}\;\Gamma, \\
        \alpha_\Gamma[\bm{u}\cdot\bm{n}_\Gamma]  & =\{p\}-p_\Gamma &  & \mbox{on}\;\Gamma,
    \end{aligned}\label{eq:interface}
\end{equation}
where we set
\begin{equation*}
    \eta_\Gamma: =\frac{\ell_\Gamma}{\kappa_\Gamma^n},\quad \alpha_\Gamma:=\eta_\Gamma(\frac{\xi}{2}-\frac{1}{4}).
\end{equation*}
Here $\xi\in (\frac{1}{2},1]$ is a model parameter, and $\kappa_\Gamma^n: \Gamma\rightarrow \mathbb{R}$ represents the normal permeability of the fracture, which is assumed to be a piecewise constant.
As in the bulk domain, we assume that there exists positive constants $\kappa_1^*,\kappa_2^*,\kappa_1^n,\kappa_2^n$ such that, almost everywhere on $\Gamma$,
\begin{equation*}
    \kappa_1^*\leq \kappa_\Gamma^*\leq \kappa_2^*,\quad \kappa_1^n\leq \kappa_\Gamma^n\leq \kappa_2^n.
\end{equation*}
Also, $[\cdot]$ and $\{\cdot\}$ are jump and average operators, respectively, and their precise definitions can be found in the next subsection.
The well-posedness of the coupled problem for $\xi\in (\frac{1}{2},1]$ has been proved in \cite{Martin05}.
\begin{figure}
    \centering
    \includegraphics[width=0.5\textwidth]{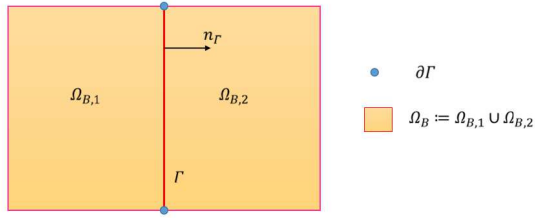}
    \caption{Illustration of bulk and fracture domain.}
    \label{fig:bulkdomain}
\end{figure}

\begin{remark}[Neumann boundary conditions]
    \rm{When the fracture tip is immersed in the domain $\Omega_B$, the boundary condition at the immersed tip can be modeled as a homogeneous Neumann boundary condition, see \cite{Angot09}.
    For both bulk and fracture domains, the Neumann boundary condition can be treated as a natural boundary condition.
    Since the analysis for such boundary condition is parallel to the analysis for Dirichlet boundary conditions, we only consider the latter one for simplicity.}
\end{remark}

Before closing this subsection, we introduce some notations that will be employed throughout the paper.
Let $D\subset \mathbb{R}^d,$ $d=1,2$, we adopt the standard notations for the Sobolev spaces $H^s(D)$ and their associated norms $\|\cdot\|_{s,D}$, and semi-norms $|\cdot|_{s,D}$ for $s\geq 0$.
The space $H^0(D)$ coincides with $L^2(D)$, for which the norm is denoted as $\|\cdot\|_{D}$.
We use $(\cdot,\cdot)_D$ to denote the inner product for $d=2$ and $\langle\cdot,\cdot\rangle_D$ for $d=1$.
If $D=\Omega$, the subscript $\Omega$ will be dropped unless otherwise mentioned.
In the sequel, we use $C$ to denote a generic positive constant which may have different values at different occurrences.

\subsection{Staggered DG method}

In this subsection, we begin with introducing the construction of our staggered DG spaces, in line with this we then present the staggered DG method for the model problem \eqref{eq:bulk1}-\eqref{eq:interface}.
We consider a family of meshes $\mathcal{T}_u$ made of disjoint polygonal (primal) elements which are aligned with the fracture $\Gamma$ so that any element $T\in \mathcal{T}_u$ can not be cut by $\Gamma$.
Note that, since $\Omega_{B,1}$ and $\Omega_{B,2}$ are disjoint, each element $T$ belongs to one of the two subdomains.
The union of all the edges excluding the edges lying on the fracture $\Gamma$ in the decomposition $\mathcal{T}_u$ is called primal edges, which is denoted as $\mathcal{F}_u$.
Here we use $\mathcal{F}_u^0$ to stand for the subset of $\mathcal{F}_u$, that is the set of edges in $\mathcal{F}_{u}$ that do not lie on $\partial\Omega_B$.
%For a polygonal domain $\Omega$, consider a general mesh $\mathcal{T}_{u}$ (of $\Omega$) aligned with $\Gamma$ that consists of nonempty connected close disjoint subsets of $\Omega$:
%\begin{align*}
%\bar{\Omega}=\bigcup_{T\in \mathcal{T}_{u}}T.
%\end{align*}
In addition, we use $\mathcal{F}_h^\Gamma$ to denote the one-dimensional mesh of the fracture $\Gamma$.
For the construction of staggered DG method, we decompose each element $T \in \mathcal{T}_u$ into the union of triangles by connecting the interior point $\nu$ of $T$ to all the vertices.
Here the interior point $\nu$ is chosen as the center point for simplicity.
% We construct the primal sub-meshes $\mathcal{T}_h$ as a triangular sub-grid of the primal grid: for an element $T\in \mathcal{T}_{u}$, elements of $\mathcal{T}_h$ are obtained by connecting the interior point $\nu$ to all vertices of $\mathcal{T}_{u}$ (see Figure~\ref{grid}):
%\begin{align*}
%\bar{\Omega}_B=\bigcup_{\tau\in \mathcal{T}_h}\bar{\tau}.
%\end{align*}
We rename the union of these sub-triangles by $S(\nu)$ to indicate that the triangles sharing common vertex $\nu$.
In addition, the resulting simplicial sub-meshes are denoted as $\mathcal{T}_h$.
Moreover, some additional edges are generated in the subdivision process due to the connection of $\nu$ to all the vertices of the primal element, and these edges are denoted by  $\mathcal{F}_p$.
For each triangle $\tau\in \mathcal{T}_h$, we let $h_\tau$ be the diameter of $\tau$ and $h=\max\{h_\tau, \tau\in \mathcal{T}_h\}$.
%Furthermore,
%$\mathcal{T}_h$ is assumed to satisfy the local quasi-uniform assumption in the sense that for any pair of elements $\tau$ and
%$\tau'$ in $\mathcal{T}_h$ which share an edge, there exists a
%constant $\kappa$ independent of $h_\tau$ and $h_{\tau'}$ such
%that $\kappa^{-1} \le h_\tau/h_{\tau'} \le \kappa$.
In addition, we define $\mathcal{F}:=\mathcal{F}_{u}\cup \mathcal{F}_{p}$ and $\mathcal{F}^{0}:=\mathcal{F}_{u}^{0}\cup \mathcal{F}_{p}$.
The construction for general meshes is illustrated in Figure~\ref{grid}, where the black solid lines are edges in $\mathcal{F}_{u}$ and black dotted lines are edges in $\mathcal{F}_{p}$.

Finally, we construct the dual mesh.
For each interior edge $e\in \mathcal{F}_{u}^0$, we use $D(e)$ to represent the dual mesh, which is the union of the two triangles in $\mathcal{T}_h$ sharing the edge $e$.
For each edge $e\in(\mathcal{F}_{u}\backslash\mathcal{F}_{u}^0)\cup \mathcal{F}_h^\Gamma$, we use $D(e)$ to denote the triangle in $\mathcal{T}_h$ having the edge $e$, see Figure~\ref{grid}.

For each edge $e$, we define a unit normal vector $\bm{n}_{e}$ as follows:
If $e\in \mathcal{F}\backslash \mathcal{F}^{0}$, then $\bm{n}_{e}$ is the unit normal vector of $e$ pointing towards the outside of $\Omega$.
If $e\in \mathcal{F}^{0}$, an interior edge, we then fix $\bm{n}_{e}$ as one of the two possible unit normal vectors on $e$.
When there is no ambiguity, we use $\bm{n}$ instead of $\bm{n}_{e}$ to simplify the notation.

Typical analysis for polygonal element usually requires the following mesh regularity assumptions (cf. \cite{Beir13,Cangiani16}):
\begin{description}
    \item[Assumption (A)] Every element $S(\nu)$ in $\mathcal{T}_{u}$ is star-shaped with respect to a ball of radius $\geq \rho_S h_{S(\nu)}$, where $\rho_S$ is a positive constant and $h_{S(\nu)}$ denotes the diameter of $S(\nu)$.
    \item[Assumption (B)] For every element $S(\nu)\in \mathcal{T}_{u}$ and every edge $e\in \partial S(\nu)$, it satisfies $h_e\geq \rho_E h_{S(\nu)}$, where $\rho_E$ is a positive constant and $h_e$ denotes the length of edge $e$.
\end{description}
Assumption (A) and (B) can guarantee that the triangulation $\mathcal{T}_h$ is shape regular.
However, it excludes the elements with arbitrarily small edges, which is interesting from the practical applications.
Thus, in this paper, we will show the convergence estimates by only assuming Assumption (A).

\begin{figure}
    \centering
    \includegraphics[width=0.35\textwidth]{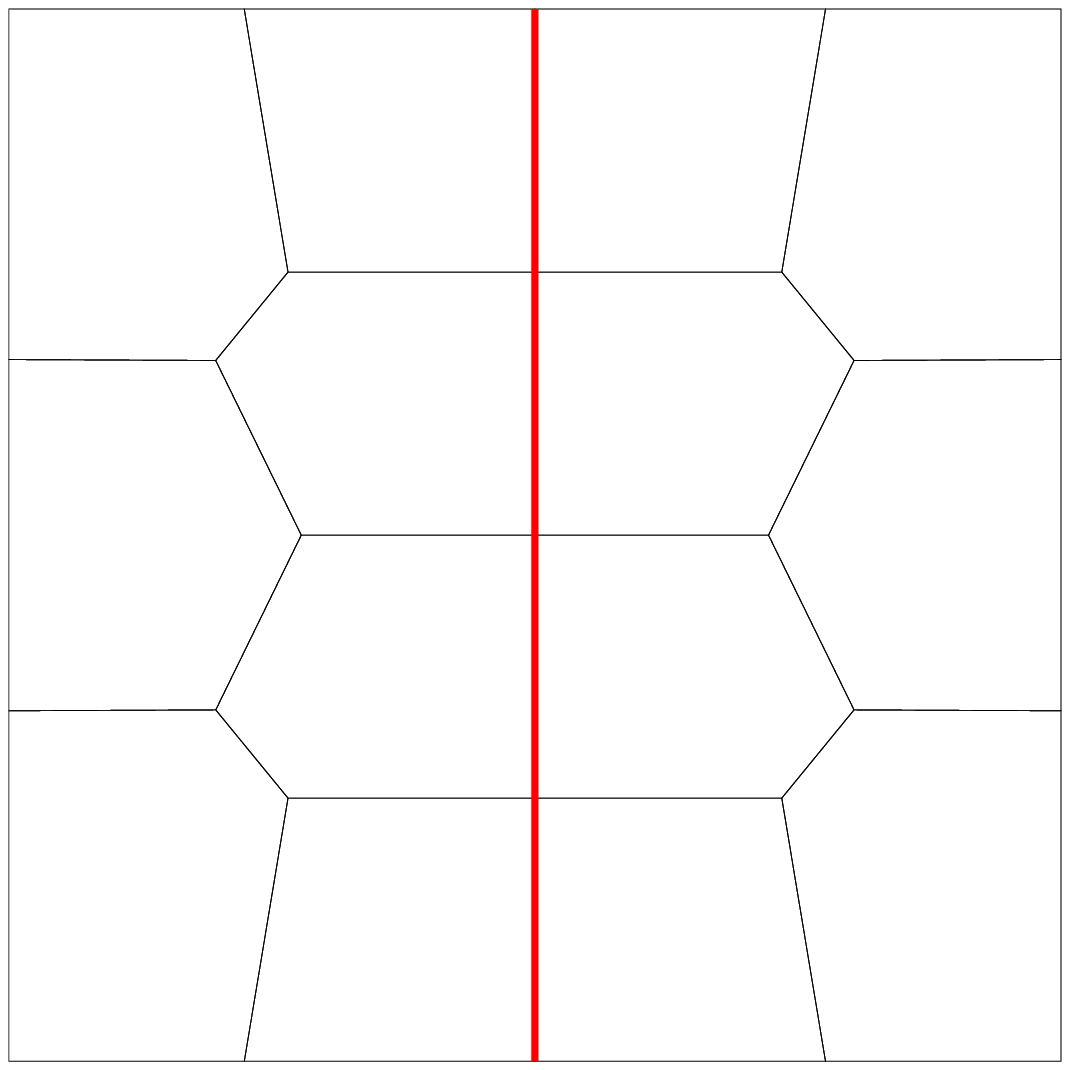}\hspace{1em}
    \includegraphics[width=0.35\textwidth]{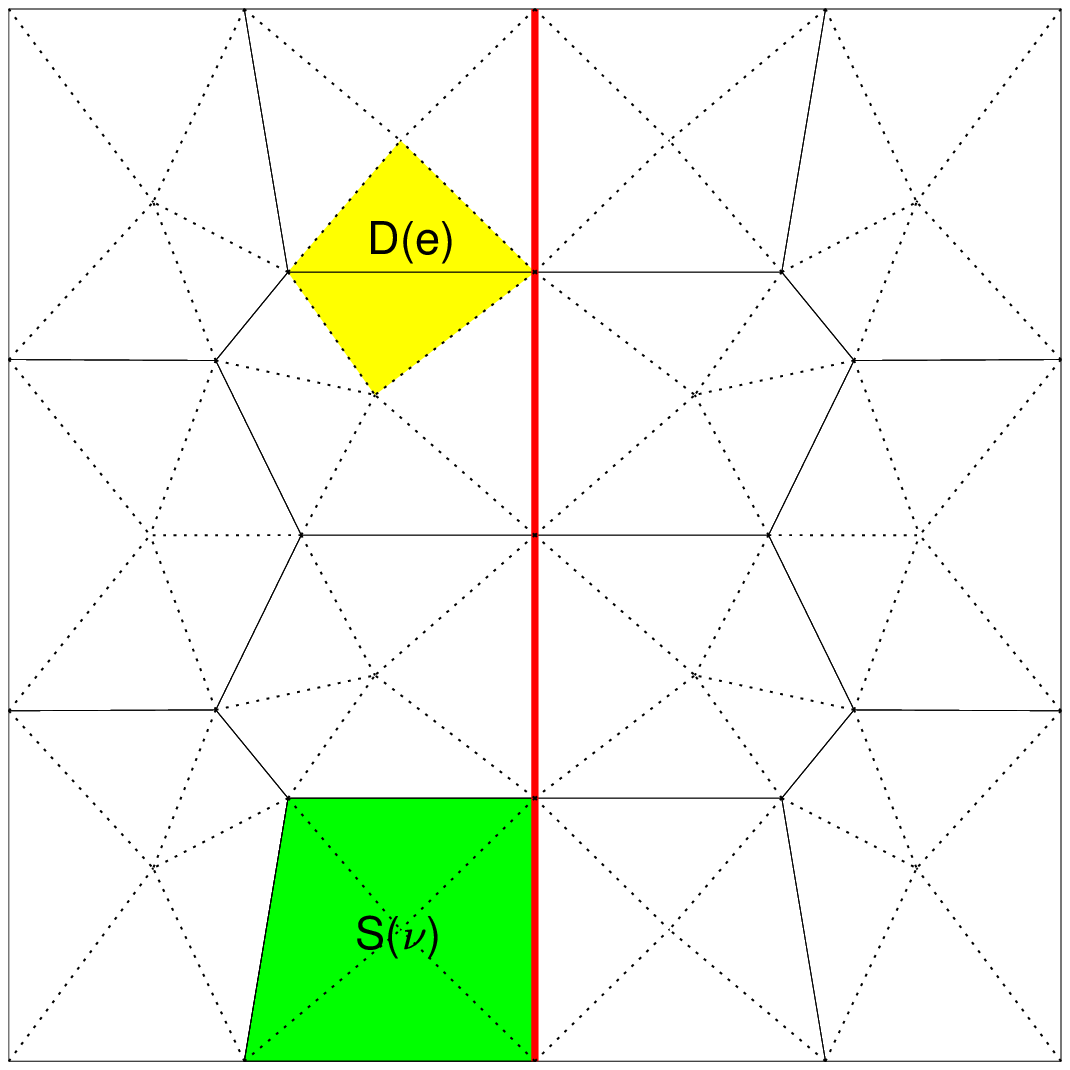}
    \caption{Schematic of the primal mesh $S(\nu)$, the dual mesh $D(e)$ and the primal simplicial sub-meshes.}
    \label{grid}
\end{figure}

%The following mesh regularity assumptions are also needed throughout the paper (cf. \cite{Beir13,Cangiani16}).
%\begin{assumption}\label{assum:regularity}
%We assume there exists a constant $\rho>0$ such that
%\begin{enumerate}[itemindent=1em,,label=(\arabic*)]
%  \item For every element $S(\nu)\in \mathcal{T}_{u}$ and every edge $e\in \partial S(\nu)$, it satisfies $h_e\geq \rho h_{S(\nu)}$, where $h_e$ denotes the length of edge $e$ and $h_{S(\nu)}$ denotes the diameter of $S(\nu)$.
%  \item Every element $S(\nu)$ in $\mathcal{T}_{u}$ is star-shaped with respect to a ball of radius $\geq \rho h_{S(\nu)}$.
%\end{enumerate}
%\end{assumption}

Let $k\geq 0$ be the order of approximation. For every $\tau \in \mathcal{T}_{h}$ and $e\in\mathcal{F}$, we define $P^{k}(\tau)$ and $P^{k}(e)$ as the spaces of polynomials of degree less than or equal to $k$ on $\tau$ and $e$, respectively.
For $q$ and $\bm{v}$ belonging to the broken Sobolev space the jump $[q]\mid_e$ and the jump $[\bm{v}\cdot\bm{n}]\mid_e$ over $e\in \mathcal{F}^0\cup \mathcal{F}_h^\Gamma$ are defined respectively as
\begin{equation*}
    [q]=q_{1}-q_{2}, \quad [\bm{v}\cdot\bm{n}]=\bm{v}_{1}\cdot\bm{n}-\bm{v}_{2}\cdot\bm{n},
\end{equation*}
where $q_{i}=q\mid_{\tau_{i}}$, $\bm{v}_{i}=\bm{v}\mid_{\tau_{i}}$ and $\tau_{1}$, $\tau_{2}$ are the two triangles in $\mathcal{T}_h$ having the edge $e$.
Moreover, for $e\in \mathcal{F}\backslash \mathcal{F}^0$, we define $[q]=q_1$.
In the above definitions, we assume $\bm{n}$ is pointing from $\tau_1$ to $\tau_2$.

Similarly, we define the average $\{q\}\mid_e$ and the average $\{\bm{v}\cdot\bm{n}\}\mid_e$ over $e\in \mathcal{F}^0\cup \mathcal{F}_h^\Gamma$ by
\begin{align*}
    \{q\}=\frac{q_{1}+q_{2}}{2}, \quad \{\bm{v}\cdot\bm{n}\}=\frac{\bm{v}_{1}\cdot\bm{n}+\bm{v}_{2}\cdot\bm{n}}{2},
\end{align*}
where $q_{i}=q\mid_{\tau_{i}}$, $\bm{v}_{i}=\bm{v}\mid_{\tau_{i}}$ and $\tau_{1}$, $\tau_{2}$ are the two triangles in $\mathcal{T}_h$ having the edge $e$.

Next, we will introduce some finite dimensional spaces.
First, we define the following locally $H^{1}(\Omega)$ conforming space $S_h$:
\begin{equation*}
    S_{h}:=\{q :
    q\mid_{\tau}\in P^{k}(\tau)\;
    \forall \tau \in \mathcal{T}_{h};\;[q]\mid_e=0\;\forall e\in\mathcal{F}_{u}^{0};
    \;q\mid_{\partial \Omega_B}=0\}.
\end{equation*}
Notice that, if $q\in S_h$, then $q\mid_{D(e)}\in H^1(D(e))$ for each edge $e\in (\mathcal{F}_{u}\cup \mathcal{F}_h^\Gamma)$ and no continuity is imposed across $e\in \mathcal{F}_h^\Gamma$ for function $q\in S_h$.
The discrete $H^1$-norm for $S_h$ are defined as follows
\begin{equation*}
    %\|q\|_{X}^2&=\|q\|_0^2+\sum_{e\in \mathcal{F}_{u}^0}h_e\| q\|_{0,e}^2,\\
    \|q\|_Z^2=\sum_{\tau\in \mathcal{T}_h}\|\nabla q\|_{0,\tau}^2+\sum_{\tau\in \mathcal{T}_h}\sum_{e\in \mathcal{F}_{p}\cap \partial \tau}\frac{h_e}{2|\tau|}\|[q]\|_{0,e}^2.
\end{equation*}
%An application of scaling arguments implies that
%\begin{align}
%\|q\|_0\leq \|q\|_{X'}\leq C\|q\|_0\quad \forall q\in S_h\label{eq:scaling}.
%\end{align}
where $|\tau|$ represents the area of triangle $\tau\in \mathcal{T}_h$.
Note that the scaling in the second term used here is different from that of \cite{LinaPark}, and this modification enables us to show the convergence estimates without Assumption (B).
% Here, we specify the degrees of freedom for $S_h$ which is unlike that of \cite{ChungWave} over the edges $e\in \mathcal{F}_h^\Gamma$ due to the discontinuity of $S_h$ across $\Gamma$.
We specify the degrees of freedom for $S_h$ similar to that of \cite{ChungWave}.

(SD1) For $e\in \mathcal{F}_u$, we have
\begin{equation*}
    \phi_e(q): = \langle q,p_k\rangle_e\quad \forall p_k\in P^{k}(e).
\end{equation*}

(SD2) For $\tau\in \mathcal{T}_h$, we have
\begin{equation*}
    \phi_\tau(q): = (q,p_{k-1})_\tau \quad \forall p_{k-1}\in P^{k-1}(\tau).
\end{equation*}

(SD3) For $e\in \mathcal{F}_h^\Gamma$, we have for each $i=1,2$
\begin{equation*}
    \phi_{e}^i(q): = \langle q\mid_{\Omega_{B,i}},p_k\rangle_e \quad  \forall p_k\in P^k(e).
\end{equation*}
Note that in original staggered DG method, the finite dimensional space for pressure is continuous over all the primal edges, in which case (SD3) can be compliant with (SD1).
In this paper we consider Darcy flows with fracture where the pressure is discontinuous across the fracture, thereby (SD3) can not be compliant with (SD1).
Proceeding analogously to Lemma~2.2 of \cite{ChungWave}, we can show that any function $q\in S_h$ is uniquely determined by the degrees of freedom (SD1)-(SD3), which is omitted here for simplicity.

We next define the following locally $H(\mbox{div};\Omega)-$conforming space $\bm{V}_h$:
\begin{equation*}
    \bm{V}_{h}=\{\bm{v}:
    \bm{v}\mid_{\tau} \in P^{k}(\tau)^{2}\;\forall \tau \in \mathcal{T}_{h};\;
    [\bm{v}\cdot\bm{n}]\mid_e=0\;\forall e\in \mathcal{F}_{p}\}.
\end{equation*}
Note that if $\bm{v}\in \bm{V}_h$, then $\bm{v}\mid_{S(\nu)}\in H(\textnormal{div};S(\nu))$ for each $S(\nu)\in \mathcal{T}_{u}$.
We equip $\bm{V}_h$ with the following discrete $L^2$ norm
\begin{equation*}
    \|\bm{v}\|_{X'}^2=\|\bm{v}\|_0^2+\sum_{\tau\in \mathcal{T}_h}\sum_{e\in \mathcal{F}_{p}\cap \partial \tau}\frac{|\tau|}{2h_e}\|\bm{v}\cdot \bm{n}\|_{0,e}^2.
    %\|\bm{v}\|_{Z'}^2&=\sum_{\tau\in \mathcal{T}_h}\|\nabla \cdot \bm{v}\|_{0,\tau}^2+\sum_{e\in \mathcal{F}_{u}^0}h_e^{-1}\| [\bm{v}\cdot \bm{n}]\|_{0,e}^2.
\end{equation*}
The degrees of freedom for $\bm{V}_h$ can be defined below.

(VD1) For each edge $e\in \mathcal{F}_{p}$, we have
\begin{equation*}
    \psi_e(\bm{v}):=\langle\bm{v}\cdot\bm{n}, p_k\rangle_e \quad \forall p_k\in P^k(e).
\end{equation*}

(VD2) For each $\tau\in \mathcal{T}_h$, we have
\begin{equation*}
    \psi_\tau(\bm{v}):=(\bm{v}, \bm{p}_{k-1})_\tau \quad \forall \bm{p}_{k-1}\in P^{k-1}(\tau)^2.
\end{equation*}

Finally, we define a finite dimensional subspace of $H^1_0(\Gamma)$ by
\begin{equation*}
    W_h=\{q_\Gamma: q_\Gamma\in H^1_0(\Gamma)\;|\; q_\Gamma\mid_e \in P^k(e), \forall e\in \mathcal{F}_h^\Gamma\}.
\end{equation*}

With the above preparations, we can now derive our staggered DG method by following \cite{LinaPark,LinaParkShin}.
Multiplying \eqref{eq:bulk1} by $\bm{v}\in \bm{V}_h$ and performing integration by parts, we can obtain
\begin{align*}
     & (K^{-1}\bm{u},\bm{v})_{\Omega_B}+\sum_{e\in \mathcal{F}_u}\langle p, [\bm{v}\cdot \bm{n}]\rangle_e+\sum_{e\in \mathcal{F}_h^\Gamma}\langle [p],\{\bm{v}\cdot \bm{n}\}\rangle_e \\
     & \;+\sum_{e\in \mathcal{F}_h^\Gamma}\langle \{p\},[\bm{v}\cdot \bm{n}]\rangle_e-\sum_{\tau\in \mathcal{T}_h}(p, \nabla \cdot \bm{v})_\tau=0,
\end{align*}
where the staggered continuity property of $\bm{v}$ is integrated into the derivation.

Similarly, multiplying \eqref{eq:bulk2} by $q\in S_h$ and performing integration by parts yield
\begin{equation}
    \sum_{e\in \mathcal{F}_p}\langle\bm{u}\cdot \bm{n}, [q]\rangle_e+\sum_{e\in \mathcal{F}_h^\Gamma} \langle [\bm{u}\cdot \bm{n} ],\{q\}\rangle_e+\sum_{e\in \mathcal{F}_h^\Gamma} \langle \{\bm{u}\cdot \bm{n}\},[q]\rangle_e-\sum_{\tau\in \mathcal{T}_h}(\bm{u}, \nabla q)_\tau=(f,q)_{\Omega_B}\label{eq:uterm}.
\end{equation}
Then we exploit the interface condition \eqref{eq:interface} in \eqref{eq:uterm} and recast the above formulation as
\begin{equation*}
    \sum_{e\in \mathcal{F}_p}\langle\bm{u}\cdot \bm{n}, [q]\rangle_e+\sum_{e\in \mathcal{F}_h^\Gamma}\langle \frac{1}{\alpha_\Gamma}(\{p\}-p_\Gamma),\{q\}\rangle_e+\sum_{e\in \mathcal{F}_h^\Gamma}\langle \frac{1}{\eta_\Gamma}[p],[q]\rangle_e-\sum_{\tau\in \mathcal{T}_h}(\bm{u}, \nabla q)_\tau=(f,q)_{\Omega_B}.
\end{equation*}
As for the fracture model \eqref{eq:fracture}, we multiply by $q_{\Gamma}\in W_h$ and replace the jump term $[\bm{u}]|_\Gamma\cdot \bm{n}_\Gamma$ by utilizing \eqref{eq:interface}, which implies
\begin{equation*}
    \langle K_\Gamma\nabla_t p_{\Gamma}, \nabla_t q_\Gamma\rangle_\Gamma-\sum_{e\in \mathcal{F}_h^\Gamma}\langle\frac{1}{\alpha_\Gamma}(\{p\}-p_{\Gamma}),q_\Gamma\rangle_e
    =\langle\ell_\Gamma f_\Gamma, q_\Gamma\rangle_\Gamma.
\end{equation*}

Thereby we obtain the following discrete formulation for the model problem \eqref{eq:bulk1}-\eqref{eq:fracture}:
Find $(\bm{u}_h, p_h, p_{\Gamma,h})\in \bm{V}_h\times S_h\times W_h$ such that
\begin{equation}
    \begin{split}
        (K^{-1} \bm{u}_h, \bm{v})_{\Omega_B}+b_h^*(p_h, \bm{v})&=0,\\
        -b_h(\bm{u}_h, q)+\sum_{e\in \mathcal{F}_h^\Gamma}\langle \frac{1}{\alpha_\Gamma}(\{p_h\}-p_{\Gamma,h}),\{q\}\rangle_e+\sum_{e\in \mathcal{F}_h^\Gamma}\langle \frac{1}{\eta_\Gamma}[p_h],[q]\rangle_e&=(f,q)_{\Omega_B},\\
        \langle K_\Gamma\nabla_t p_{\Gamma,h}, \nabla_t q_{\Gamma}\rangle_\Gamma-\sum_{e\in \mathcal{F}_h^\Gamma}\langle\frac{1}{\alpha_\Gamma}(\{p_h\}-p_{\Gamma,h}),q_{\Gamma}\rangle_e
        &=\langle\ell_\Gamma f_\Gamma, q_{\Gamma}\rangle_\Gamma,\\
        \forall (\bm{v},q,q_{\Gamma})\in \bm{V}_h\times S_h\times W_h,
    \end{split}
    \label{eq:discrete}
\end{equation}
where the bilinear forms are defined by
\begin{align*}
    %I_1(p_h,\bm{v}_h)&= \langle[p_h],\{\bm{v}_h\cdot\bm{n}\}\rangle_\Gamma
    %+\langle\{p_h\},[\bm{v}_h\cdot\bm{n}]\rangle_\Gamma,\\
    %I_2((p_h,p_{\Gamma,h}),q_h) &=\langle \frac{1}{\alpha_\Gamma}(\{p_h\}-p_{\Gamma,h}),\{q_h\}\rangle_\Gamma+\langle \frac{1}{\eta_\Gamma}[p_h],[q_h]\rangle_\Gamma,\\
    %I_3((p_h,p_{\Gamma,h}),q_{\Gamma,h})&=
    %-\frac{1}{\alpha_\Gamma}\langle\{p_h\},q_{\Gamma,h}\rangle_\Gamma
    %+\frac{1}{\alpha_\Gamma}\langle p_{\Gamma,h},q_{\Gamma,h}\rangle_\Gamma,\\
    b_h(\bm{u}_h, q)   & =- \sum_{e\in \mathcal{F}_p}\langle \bm{u}_h\cdot\bm{n},[q]\rangle_e+\sum_{\tau\in \mathcal{T}_h}(\bm{u}_h,\nabla q)_\tau,                                                                                 \\
    b_h^*(p_h, \bm{v}) & =\sum_{e\in \mathcal{F}_u^0}\langle p_h,[\bm{v}\cdot\bm{n}]\rangle_e-\sum_{\tau\in \mathcal{T}_h}(p_h,\nabla \cdot\bm{v})_\tau+\sum_{e\in \mathcal{F}_h^\Gamma}\langle[p_h],\{\bm{v}\cdot\bm{n}\}\rangle_e
    +\sum_{e\in \mathcal{F}_h^\Gamma}\langle\{p_h\},[\bm{v}\cdot\bm{n}]\rangle_e                                                                                                                                                    \\
                       & =\sum_{e\in \mathcal{F}_u^0}\langle p_h,[\bm{v}\cdot\bm{n}]\rangle_e
    -\sum_{\tau\in \mathcal{T}_h}(p_h,\nabla \cdot\bm{v})_\tau
    +\sum_{e\in \mathcal{F}_h^\Gamma}\langle[p_h(\bm{v}\cdot\bm{n})],1\rangle_e.
\end{align*}
Summing up the equations in \eqref{eq:discrete} yields the following formulation: Find $(\bm{u}_h,p_h,p_{\Gamma,h})\in \bm{V}_h\times S_h\times W_h$ such that
\begin{equation}
    \begin{split}
        &(K^{-1}\bm{u}_h,\bm{v})_{\Omega_B}+b_h^*(p_h,\bm{v})-b_h(\bm{u}_h,q)+\sum_{e\in \mathcal{F}_h^\Gamma}\langle \frac{1}{\alpha_\Gamma}(\{p_h\}-p_{\Gamma}),\{q\}-q_{\Gamma}\rangle_e\\
        &\;+\sum_{e\in \mathcal{F}_h^\Gamma}\langle \frac{1}{\eta_\Gamma}[p_h],[q]\rangle_e
        +\langle K_\Gamma\nabla_t p_{\Gamma,h}, \nabla_t q_{\Gamma}\rangle_\Gamma=(f,q)_{\Omega_B}+\langle\ell_\Gamma f_\Gamma, q_{\Gamma}\rangle_\Gamma,\\
        &\hskip 8cm\forall (\bm{v},q,q_{\Gamma})\in \bm{V}_h\times S_h\times W_h.
    \end{split}
    \label{eq:weak}
\end{equation}
Integration by parts reveals the following adjoint property
\begin{equation}
    b_h(\bm{v},q)=b_h^*(q, \bm{v})\quad \forall (\bm{v},q)\in \bm{V}_h\times S_h.\label{eq:adjoint}
\end{equation}

\begin{remark}
    \rm{
        In the derivation we employ the interface conditions \eqref{eq:interface} to replace all the terms corresponding to $\bm{u}$ on the fracture $\Gamma$ by $p$ and $p_\Gamma$, which is different from existing methods such as the hybrid high-order method and mixed finite element method \cite{Chave18,DAngelo12}.
        %\Red{The reason why we have pressure terms instead of velocity terms is as follows:
        %Edges on the fracture are primal edges by the construction of staggered DG and the degrees of freedom related to the primal edges are pressure (SD3).
%        This difference yields stability estimate in terms of pressure terms on the fracture, not the velocity, see Theorem~\ref{thm:stability} for more detail.}
         \Red{Our methodology is based on the fact that the degrees of freedom for bulk pressure (SD3) are defined with respect to the primal edges on the fracture.}
         %, thus bulk pressure on the fracture should be preserved to ensure the stability of staggered DG methods.}
        %We take advantage of the essences of staggered DG method to come up with this novel strategy that the degrees of freedom associated to bulk pressure should be kept, which ensures the stability of staggered DG methods.
        We also emphasize that the velocity $\bm{u}$ can be made both locally and globally mass conservative by a suitable postprocessing (cf. \cite{ChungCockburn14}).
        Moreover, the use of conforming finite element to discretize the equations in the fracture is made just for simplicity, other discretization techniques can be exploited.
    }
\end{remark}
%In \cite{ChungWave}, the following inf-cup condition is proved with shape regularity assumption.
%\begin{align}
%\inf_{q\in S_h}\sup_{\bm{v}\in \bm{V}_h}\frac{b_h(\bm{v},q)}{\|\bm{v}\|_{X'}\|q\|_Z}\geq C.\label{eq:inf-sup}
%\end{align}

\begin{lemma}Under Assumption (A), we have the following inf-sup condition
    \begin{align}
        \inf_{q\in S_h}\sup_{\bm{v}\in \bm{V}_h}\frac{b_h(\bm{v},q)}{\|\bm{v}\|_{X'}\|q\|_Z}\geq C.\label{eq:inf-sup}
    \end{align}
\end{lemma}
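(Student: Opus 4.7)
The plan is a constructive Babu\v{s}ka-Brezzi argument: given $q \in S_h$, construct a test function $\bm{v}_q \in \bm{V}_h$ satisfying $b_h(\bm{v}_q, q) \geq C_1\|q\|_Z^2$ and $\|\bm{v}_q\|_{X'} \leq C_2\|q\|_Z$, from which \eqref{eq:inf-sup} follows with constant $C_1/C_2$. I would specify $\bm{v}_q$ through its degrees of freedom (VD1)-(VD2): on each triangle $\tau \in \mathcal{T}_h$, set $(\bm{v}_q, \bm{p}_{k-1})_\tau = (\nabla q, \bm{p}_{k-1})_\tau$ for every $\bm{p}_{k-1} \in P^{k-1}(\tau)^2$; on each $e \in \mathcal{F}_p$ shared by triangles $\tau_1, \tau_2$, prescribe the edge moments so that $\bm{v}_q \cdot \bm{n}|_e = \{\nabla q \cdot \bm{n}\}_w - \sigma_e [q]|_e$, where $\{\cdot\}_w$ is the \emph{area-weighted} average with weights $w_i = |\tau_i|/(|\tau_1|+|\tau_2|)$ and $\sigma_e$ is of the same order as the jump-weight $\frac{h_e}{2}(|\tau_1|^{-1}+|\tau_2|^{-1})$ in $\|\cdot\|_Z$. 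The area-weighted averaging is essential: an unweighted arithmetic average makes the $X'$-norm of the resulting function blow up when the two adjacent triangles have very different areas, which is precisely the situation allowed once Assumption (B) is dropped.

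Integration by parts together with the interior-moment prescription yields
\begin{equation*}
b_h(\bm{v}_q, q) = \sum_{\tau \in \mathcal{T}_h}\|\nabla q\|_{0,\tau}^2 - \sum_{e \in \mathcal{F}_p}\langle\{\nabla q \cdot \bm{n}\}_w, [q]\rangle_e + \sum_{e \in \mathcal{F}_p}\sigma_e\|[q]\|_{0,e}^2.
\end{equation*}
The cross term is bounded via Cauchy-Schwarz and the polynomial trace inequality $\|\nabla q \cdot \bm{n}\|_{0,e}^2 \leq C_k (h_e/|\tau|)\|\nabla q\|_{0,\tau}^2$ (whose constant depends only on the polynomial degree, not on the shape of $\tau$), then absorbed into $\|\nabla q\|_0^2$ by Young's inequality and into the stabilization through the choice of $\sigma_e$ (possibly after a one-time rescaling $\bm{v}_q \mapsto \eta\bm{v}_q$), producing $b_h(\bm{v}_q, q) \gtrsim \|q\|_Z^2$.

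For $\|\bm{v}_q\|_{X'} \lesssim \|q\|_Z$, the edge part is handled directly: convexity gives $\|\{\nabla q \cdot \bm{n}\}_w\|_{0,e}^2 \leq w_1\|\nabla q|_{\tau_1}\cdot\bm{n}\|_{0,e}^2 + w_2\|\nabla q|_{\tau_2}\cdot\bm{n}\|_{0,e}^2$, which combined with the polynomial trace inequality yields $\sum_\tau\sum_{e \in \mathcal{F}_p \cap \partial\tau}(|\tau|/h_e)\|\{\nabla q\cdot\bm{n}\}_w\|_{0,e}^2 \lesssim \|\nabla q\|_0^2$ since the area weights make the combination $w_i(|\tau_1|+|\tau_2|)/|\tau_i| = 1$ cancel exactly; the jump contribution matches the $Z$-norm edge term by the choice of $\sigma_e$. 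The bulk part $\sum_\tau\|\bm{v}_q\|_{0,\tau}^2$ reduces, upon writing $\bm{v}_q|_\tau = \nabla q + \bm{w}$ with $\bm{w}$ orthogonal to $P^{k-1}(\tau)^2$, to a discrete trace-type inequality
\begin{equation*}
\|\bm{w}\|_{0,\tau}^2 \leq C \sum_{e \in \mathcal{F}_p \cap \partial\tau}\frac{|\tau|}{h_e}\|\bm{w}\cdot\bm{n}\|_{0,e}^2,
\end{equation*}
with constant depending only on $k$ and $\rho_S$ from Assumption (A).

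The main obstacle is proving this discrete trace inequality uniformly in the shape of $\tau$. Under Assumption (B) it follows from a standard reference-element scaling; without (B), the triangle $\tau$ may be arbitrarily thin (its $\mathcal{F}_u$ edge of length $\to 0$ while its $\mathcal{F}_p$ edges retain length $\sim \rho_S h_{S(\nu)}$ thanks to star-shapedness of $S(\nu)$), so the naive scaling constant degenerates. The dual pairing of the weights $h_e/|\tau|$ in $\|\cdot\|_Z$ and $|\tau|/h_e$ in $\|\cdot\|_{X'}$ is designed precisely so that the shape-dependent factors cancel in the local estimate, and the proof must exploit the finite-dimensional structure of the $L^2$-orthogonal complement of $P^{k-1}(\tau)^2$ in $P^k(\tau)^2$ together with the length lower bound on $\mathcal{F}_p$ edges granted by Assumption (A). This discrete trace inequality --- the one announced in the introduction and carried out in Section~\ref{sec:error} --- is the technical heart of the argument; once available, summing the local bounds over $\tau$ closes the estimate $\|\bm{v}_q\|_{X'} \lesssim \|q\|_Z$ and yields the inf-sup constant in~\eqref{eq:inf-sup}.
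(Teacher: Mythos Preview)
Your construction is correct but more elaborate than necessary. The paper takes the same constructive route---specify $\bm{v}$ through the degrees of freedom (VD1)--(VD2)---but with a simpler edge prescription: it sets $\langle\bm{v}\cdot\bm{n},p_k\rangle_e$ proportional to $\langle[q],p_k\rangle_e$ alone, with weight $\sum_{\tau\subset D(e)}\frac{h_e}{2|\tau|}$, and \emph{omits} your area-weighted flux average $\{\nabla q\cdot\bm{n}\}_w$ entirely. Since $\nabla q\in P^{k-1}(\tau)^2$, the interior moments already deliver $\sum_\tau(\bm{v},\nabla q)_\tau=\sum_\tau\|\nabla q\|_{0,\tau}^2$ directly (no integration by parts is involved), and the edge choice reproduces the jump contribution with exactly the $Z$-norm weight, so $b_h(\bm{v},q)=\|q\|_Z^2$ on the nose---no cross term, no Young inequality, no rescaling $\eta$. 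The $\{\nabla q\cdot\bm{n}\}_w$ term you add only generates work that must then be absorbed away.

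Where the two approaches coincide is the bound $\|\bm{v}\|_{X'}\leq C\|q\|_Z$. You correctly isolate this as the substantive step and reduce it to a discrete trace-type estimate for the $P^{k-1}$-orthogonal part of $\bm{v}$ on each sub-triangle, with constants uniform under Assumption~(A) alone; you also explain why the dual weights $h_e/|\tau|$ and $|\tau|/h_e$ are designed to make the shape dependence cancel. The paper dispatches this same step with the single phrase ``scaling arguments imply $\|\bm{v}\|_{X'}\leq C\|q\|_Z$,'' so your account of what is actually required---and why it survives the loss of Assumption~(B)---is in fact more explicit than the paper's.
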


\begin{proof}
    The proof for this lemma follows similar idea as Theorem~3.2 of \cite{ChungWave}, we simplify the proof by direct applications of the degrees of freedom (VD1)-(VD2).
    In addition, our proof here only relies on Assumption (A) thanks to the modified norm defined for $\|\cdot\|_Z$ and $\|\cdot \|_{X'}$.

    Let $q\in S_h$. It suffices to find $\bm{v}\in \bm{V}_h$ such that
    \begin{equation*}
        b_h(\bm{v},q)\geq C \|q\|_Z^2\quad \mbox{and} \quad \|\bm{v}\|_{X'}\leq C \|q\|_Z.
    \end{equation*}
    Recall that
    \begin{equation}
        b_h(\bm{v}, q) =- \sum_{e\in \mathcal{F}_p}\langle \bm{v}\cdot\bm{n},[q]\rangle_e+\sum_{\tau\in \mathcal{T}_h}(\bm{v},\nabla q)_\tau.\label{bh-recall}
    \end{equation}
    % We can define $\bm{v}$ for each triangle $\tau\in \mathcal{T}_h$ by using the degrees of freedom (VD1)-(VD2)
    % \begin{align*}
    %     -\langle \bm{v}\cdot \bm{n}, [q]\rangle_e & =\frac{h_e}{|\tau|}\|[q]\|_{0,e}^2\quad \forall e\in \mathcal{F}_p\cap \partial \tau, \\
    %     (\bm{v}, \nabla q)_\tau                   & =\|\nabla q\|_{0,\tau}^2,
    % \end{align*}
    We define $\bm{v}$ by using degrees of freedom (VD1)
    \begin{equation*}
        \langle \bm{v}\cdot \bm{n},p_k\rangle_e=\sum_{\substack{\tau\in\mathcal{T}_h\\\tau\subset D(e)}}\frac{h_e}{2|\tau|}\langle[q],p_k\rangle_e\quad\forall p_k\in P^k(e),\;e\in\mathcal{F}_p,
    \end{equation*}
    and (VD2)
    \begin{equation*}
        (\bm{v},\bm{p}_{k-1})_\tau = (\nabla q,\bm{p}_{k-1})_\tau\quad\forall\bm{p}_{k-1}\in P^{k-1}(\tau)^2,\;\tau\in\mathcal{T}_h,
    \end{equation*}
    which together with \eqref{bh-recall} yields
    \begin{equation*}
        b_h(\bm{v}, q)=- \sum_{e\in \mathcal{F}_p}\langle \bm{v}\cdot\bm{n},[q]\rangle_e+\sum_{\tau\in \mathcal{T}_h}(\bm{v},\nabla q)_\tau=\|q\|_{Z}^2.
    \end{equation*}
    On the other hand, scaling arguments imply
    \begin{equation*}
        \|\bm{v}\|_{X'}\leq C \|q\|_Z.
    \end{equation*}
    This completes the proof.

    %We will first find $\bm{v}_1\in \bm{V}_h$ such that $\bm{v}_1\cdot \bm{n}=0$ for all $e\in \mathcal{F}_p$ and
    %\begin{align*}
    %b_h(\bm{v}_1, v)\geq C
    %\end{align*}

    %\begin{align*}
    %b_h(\bm{u}_h, q_h)& =- \sum_{e\in \mathcal{F}_p}\langle \bm{u}_h\cdot\bm{n},[q_h]\rangle_e+\sum_{\tau\in \mathcal{T}_h}(\bm{u}_h,\nabla q_h)_\tau
    %\end{align*}

\end{proof}

Finally, we introduce the following interpolation operators, which play an important role in later analysis.
We define the interpolation operator $I_h: H^1(\Omega_B)\rightarrow S_h$ by
\begin{equation*}
    \begin{split}
        \langle I_h w-w,\psi\rangle_e&=0 \quad \forall \psi\in P^k(e),\;e\in \mathcal{F}_{u},\\
        \langle (I_hw-w)|_{\Omega_{B,i}},\psi\rangle_{e}&=0\quad \forall \psi\in P^k(e),\;e\in \mathcal{F}_h^\Gamma,\;i=1,2,\\
        (I_hw-w,\psi)_\tau&=0 \quad \forall \psi\in P^{k-1}(\tau),\;\tau\in \mathcal{T}_h
    \end{split}
\end{equation*}
and the interpolation operator $J_h: H^\delta(\Omega_B)^2\rightarrow \bm{V}_h,\delta>1/2$ by
\begin{equation*}
    \begin{split}
        \langle(J_h \bm{v}-\bm{v})\cdot \bm{n},\phi\rangle_e&=0 \quad \forall \phi\in P^k(e),\;e\in \mathcal{F}_{p},\\
        (J_h\bm{v}-\bm{v}, \bm{\phi})_\tau&=0 \quad \forall \bm{\phi}\in P^{k-1}(\tau)^2,\; \tau\in \mathcal{T}_h.
    \end{split}
\end{equation*}
The definition of the interpolation operators implies that
\begin{align*}
    b_h(\bm{u}-J_h\bm{u},w) & =0 \quad \forall w\in S_h,                 \\
    b_h^*(p-I_hp,\bm{v})    & =0 \quad \forall \bm{v}\in \bm{V}_h.
\end{align*}
Notice that if $w$ is continuous on $\Gamma$, then $I_hw$ is also continuous on $\Gamma$.
The interpolation operators $I_h$ and $J_h$ satisfy: (1) they are locally defined for each element $\tau\in \mathcal{T}_h$; (2) for $p_k\in P^k(\tau)$ and $\bm{p}_k\in P^{k}(\tau)^2$, we have $I_hp_k=p_k$ and $J_h\bm{p}_k = \bm{p}_k$.

The following error estimates are clearly satisfied on the reference element $\hat{\tau}$ by using (1) and (2) (see \cite{Ciarlet78}).
\begin{equation*}
    \begin{split}
        \|\hat{\bm{v}}-J_h\hat{\bm{v}}\|_{0,\hat{\tau}}&\leq C \|\hat{\bm{v}}\|_{k+1,\hat{\tau}},\\
        \|\hat{q}-I_h\hat{q}\|_{0,\hat{\tau}}&\leq C \|\hat{q}\|_{k+1,\hat{\tau}},\\
        \|\nabla (\hat{q}-I_h\hat{q})\|_{0,\hat{\tau}}&\leq C \|\hat{q}\|_{k+1,\hat{\tau}},
    \end{split}
\end{equation*}
where $\hat{\bm{v}}$ and $\hat{q}$ are the corresponding variables of $\bm{v}$ and $q$ on the reference element $\hat{\tau}$.
In addition, under Assumption (A), the maximum angles in $\mathcal{T}_h$ are uniformly bounded away from $\pi$ (although shape regularity is not guaranteed).
Then we can proceed as Theorem~2.1 of \cite{Apel99} to obtain the following anisotropic error estimates for $\tau\in \mathcal{T}_{h}$
\begin{equation}
    \begin{split}
        \|\bm{v}-J_h\bm{v}\|_{0,\tau}&\leq C h_\tau^{k+1}\|\bm{v}\|_{k+1,\tau}\quad \forall \bm{v}\in H^{k+1}(\tau)^2,\\
        \|q-I_hq\|_{0,\tau}&\leq C h_\tau^{k+1}\|q\|_{k+1,\tau}\quad \forall q\in H^{k+1}(\tau).
    \end{split}
    \label{eq:interpolationIhJh-local}
\end{equation}
%thereby summing over all the elements yields
%\begin{equation}
%    \begin{split}
%        \|\bm{v}-J_h\bm{v}\|_{0,\Omega_B}&\leq C h^{k+1}\|\bm{v}\|_{k+1,\Omega_B}\quad \forall \bm{v}\in H^{k+1}(\Omega_B)^2,\\
%        \|q-I_hq\|_{0,\Omega_B}&\leq C h^{k+1}\|q\|_{k+1,\Omega_B}\quad \forall q\in H^{k+1}(\Omega_B).
%    \end{split}
%    \label{eq:interpolationIhJh}
%\end{equation}
Here, generic constants $C$ possibly depend on $\rho_S$ in Assumption~(A) but not on $\rho_E$ in Assumption~(B).
We next introduce the standard nodal interpolation operator $\pi_h: H^1(\Gamma)\rightarrow W_h$, which satisfies for $q_\Gamma\in H^{k+1}(\Gamma)$
\begin{equation*}
    \begin{split}
        \|q_\Gamma-\pi_h q_\Gamma\|_{0,e}&\leq C h_e^{k+1}\|q_\Gamma\|_{k+1,e},\\
        \|\nabla_t (q_\Gamma-\pi_h q_\Gamma)\|_{0,e}&\leq C h_e^{k}\|q_\Gamma\|_{k+1,e},
    \end{split}
\end{equation*}
where $e\in \mathcal{F}_h^\Gamma$.
%\begin{equation*}
%    \begin{split}
%        \|q_\Gamma-\pi_h q_\Gamma\|_{0,\Gamma}&\leq C h^{k+1}\|q_\Gamma\|_{k+1,\Gamma},\\
%        \|\nabla_t (q_\Gamma-\pi_h q_\Gamma)\|_{0,\Gamma}&\leq C h^{k}\|q_\Gamma\|_{k+1,\Gamma}.
%    \end{split}
%\end{equation*}
%The following trace inequality will also be employed later and interested readers can refer to \cite{Beiraoda17} for a proof.
%\begin{equation}
%    \|v\|_{0,e}\leq C h_\tau^{-\frac{1}{2}}\|v\|_{0,\tau}+h_\tau^{\frac{1}{2}}\|\nabla v\|_{0,\tau}\quad \forall e\subset \partial \tau,\;\tau\in \mathcal{T}_h.\label{eq:trace}
%\end{equation}
%which can be combined with \eqref{eq:interpolationIhJh} implies
%\begin{equation}
%    \begin{split}
%        \|\bm{v}-J_h\bm{v}\|_{X'}&\leq C h^{k+1}\|\bm{v}\|_{k+1,\Omega_B}\quad \forall \bm{v}\in H^{k+1}(\Omega_B)^2,\\
%        \|q-I_hq\|_Z&\leq C h^k \|q\|_{k+1,\Omega_B}\quad \forall q\in H^{k+1}(\Omega_B).
%    \end{split}
%    \label{eq:interpolationE}
%\end{equation}

%for theory: prove $L^2$
%How about decoupled scheme?
%Robustness in terms of the coefficient  allows non-matching grids across the interface
%The bilinear form given above is consistent.

\section{Error analysis}\label{sec:error}

In this section, we present the unique solvability of the discrete system \eqref{eq:discrete} and the convergence estimates for all the variables involved under Assumption (A).
As $L^2$ error of $\bm{u}$ is coupled with energy error of $p_\Gamma$, it will yield sub-optimal convergence if standard interpolation operator for $p_\Gamma$ is exploited.
As such, we propose to employ the Ritz projection which enables us to achieve the optimal convergence estimates.

\begin{theorem}[stability]\label{thm:stability}
    Under Assumption (A), the discrete system \eqref{eq:discrete} admits a unique solution $(\bm{u}_h, p_h, p_{\Gamma,h})\in \bm{V}_h\times S_h\times W_h$.
    Furthermore, there exists a positive constant independent of $h$ but possibly depending on $\rho_S$ and the problem data such that
    \begin{equation}
        \begin{split}
            &\|K^{-\frac{1}{2}}\bm{u}_h\|_{0,\Omega_B}^2+\|p_h\|_{0,\Omega_B}^2+\sum_{e\in \mathcal{F}_h^\Gamma}\|\eta_\Gamma^{-\frac{1}{2}}[p_h]\|_{0,e}^2
            +\|K_\Gamma^{\frac{1}{2}}\nabla_t p_{\Gamma,h}\|_{0,\Gamma}^2+\sum_{e\in \mathcal{F}_h^\Gamma}\|\alpha_\Gamma^{-\frac{1}{2}}(\{p_h\}-p_{\Gamma,h})\|_{0,e}^2\\
            &\leq C \Big(\|f\|_{0,\Omega_B}^2+\|\ell_\Gamma f_\Gamma\|_{0,\Gamma}^2\Big).
        \end{split}
        \label{eq:stability}
    \end{equation}

\end{theorem}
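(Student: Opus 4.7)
The plan is to establish the a priori estimate \eqref{eq:stability}; once this is in hand, uniqueness follows, and since \eqref{eq:discrete} is a square linear system on a finite-dimensional space, existence follows automatically. So the work is entirely in deriving the bound.

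First I would test the consolidated formulation \eqref{eq:weak} with $(\bm{v},q,q_\Gamma) = (\bm{u}_h, p_h, p_{\Gamma,h})$. The adjoint identity \eqref{eq:adjoint} makes the cross terms $b_h^*(p_h,\bm{u}_h)$ and $-b_h(\bm{u}_h,p_h)$ cancel exactly, leaving the energy identity
\begin{equation*}
\begin{split}
&\|K^{-\frac{1}{2}}\bm{u}_h\|_{0,\Omega_B}^2 + \sum_{e\in\mathcal{F}_h^\Gamma}\|\eta_\Gamma^{-\frac{1}{2}}[p_h]\|_{0,e}^2 + \sum_{e\in\mathcal{F}_h^\Gamma}\|\alpha_\Gamma^{-\frac{1}{2}}(\{p_h\}-p_{\Gamma,h})\|_{0,e}^2 + \|K_\Gamma^{\frac{1}{2}}\nabla_t p_{\Gamma,h}\|_{0,\Gamma}^2\\
&\quad= (f,p_h)_{\Omega_B} + \langle \ell_\Gamma f_\Gamma, p_{\Gamma,h}\rangle_\Gamma.
\end{split}
\end{equation*}
All five terms on the left-hand side are non-negative (note $\xi > 1/2$ makes $\alpha_\Gamma > 0$), so what remains is to dominate the two linear functionals on the right by a small multiple of the left plus a constant times $\|f\|_{0,\Omega_B}^2 + \|\ell_\Gamma f_\Gamma\|_{0,\Gamma}^2$.

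For the fracture term, $W_h \subset H^1_0(\Gamma)$ supports a standard Poincaré inequality $\|p_{\Gamma,h}\|_{0,\Gamma} \le C\|\nabla_t p_{\Gamma,h}\|_{0,\Gamma}$, and with the uniform lower bound on $K_\Gamma$ the term $\langle \ell_\Gamma f_\Gamma, p_{\Gamma,h}\rangle_\Gamma$ is absorbed into $\|K_\Gamma^{1/2}\nabla_t p_{\Gamma,h}\|_{0,\Gamma}^2$ via Cauchy-Schwarz and Young. For the bulk term I would invoke the inf-sup condition \eqref{eq:inf-sup} together with the first equation of \eqref{eq:discrete}: for every $\bm{v}\in \bm{V}_h$, using the adjoint identity, $b_h(\bm{v},p_h) = b_h^*(p_h,\bm{v}) = -(K^{-1}\bm{u}_h,\bm{v})_{\Omega_B}$, whence $|b_h(\bm{v},p_h)| \le C\|K^{-\frac{1}{2}}\bm{u}_h\|_{0,\Omega_B}\|\bm{v}\|_{X'}$ (using $\|\bm{v}\|_{0,\Omega_B}\le\|\bm{v}\|_{X'}$ and the upper bound on $K$). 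Dividing by $\|\bm{v}\|_{X'}$, taking the supremum, and applying \eqref{eq:inf-sup} yields $\|p_h\|_Z \le C\|K^{-\frac{1}{2}}\bm{u}_h\|_{0,\Omega_B}$. A discrete Poincaré inequality on $S_h$, which is available because $p_h$ vanishes on $\partial\Omega_B$ and $\|\cdot\|_Z$ already controls the nonconforming jumps on $\mathcal{F}_p$ (and jumps across $\mathcal{F}_h^\Gamma$ are separately controlled by the second left-hand-side term), then gives $\|p_h\|_{0,\Omega_B} \le C\|p_h\|_Z$, and Young's inequality produces $(f,p_h)_{\Omega_B} \le \varepsilon\|K^{-\frac{1}{2}}\bm{u}_h\|_{0,\Omega_B}^2 + C_\varepsilon\|f\|_{0,\Omega_B}^2$.

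Choosing $\varepsilon$ small enough to absorb both right-hand side contributions into the energy identity yields control of $\|K^{-\frac{1}{2}}\bm{u}_h\|_{0,\Omega_B}$, $\|K_\Gamma^{\frac{1}{2}}\nabla_t p_{\Gamma,h}\|_{0,\Gamma}$, and the two fracture seminorms by $\|f\|_{0,\Omega_B}^2 + \|\ell_\Gamma f_\Gamma\|_{0,\Gamma}^2$; feeding the resulting bound on $\bm{u}_h$ back into the inf-sup chain above recovers $\|p_h\|_{0,\Omega_B}$, completing \eqref{eq:stability}. The main obstacle is the bulk-pressure step: one must ensure that both the inf-sup constant of \eqref{eq:inf-sup} and the discrete Poincaré constant depend only on $\rho_S$ and not on any edge-to-diameter ratio, so that the whole estimate is truly independent of the presence of arbitrarily small edges (i.e.\ does not implicitly require Assumption (B)); this is precisely what the redesigned norms $\|\cdot\|_Z$ and $\|\cdot\|_{X'}$ are engineered to deliver.
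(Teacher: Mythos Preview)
Your proposal is correct and follows essentially the same route as the paper: test \eqref{eq:weak} with the solution to obtain the energy identity, control $\|p_{\Gamma,h}\|_{0,\Gamma}$ by the standard Poincar\'e inequality on $H^1_0(\Gamma)$, control $\|p_h\|_{0,\Omega_B}$ via the chain inf-sup $\Rightarrow$ $\|p_h\|_Z\le C\|K^{-1}\bm{u}_h\|_{0,\Omega_B}$ combined with a discrete Poincar\'e--Friedrichs inequality, and close with Young's inequality. The only point the paper makes more explicit is that the discrete Poincar\'e step $\|p_h\|_{0,\Omega_B}\le C\|p_h\|_Z$ on the anisotropic sub-mesh is taken from \cite{DuanTan11}, which supplies exactly the $\rho_S$-only dependence you flag as the main obstacle.
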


\begin{proof}

    Since \eqref{eq:weak} is a square linear system, existence follows from uniqueness, thus, it suffices to show uniqueness. Taking $\bm{v}=\bm{u}_h,q=p_h,q_{\Gamma}=p_{\Gamma,h}$ in \eqref{eq:weak} yields
    %\begin{align*}
    %&(K \bm{u}_h, \bm{u}_h)+\langle \frac{1}{\alpha_\Gamma}(\{p_h\}-p_{\Gamma,h}),\{p_h\}-p_{\Gamma,h}\rangle_\Gamma+\langle \frac{1}{\eta_\Gamma}[p_h],[p_h]\rangle_\Gamma+\langle K_\Gamma\nabla p_{\Gamma,h}, \nabla p_{\Gamma,h}\rangle_\Gamma\\
    %&=(f, p_h)+\langle\ell_\Gamma f_\Gamma, p_{\Gamma,h}\rangle_\Gamma+(f_v,\bm{u}_h),
    %\end{align*}
    %which yields
    \begin{align*}
        & \|K^{-\frac{1}{2}}\bm{u}_h\|_{0,\Omega_B}^2+\|K_\Gamma^{\frac{1}{2}}\nabla_t p_\Gamma\|_{0,\Gamma}^2
        +\sum_{e\in \mathcal{F}_h^\Gamma}\|\alpha_\Gamma^{-\frac{1}{2}}(\{p_h\}-p_{\Gamma,h})\|_{0,e}^2
        +\sum_{e\in \mathcal{F}_h^\Gamma}\|\eta_\Gamma^{-\frac{1}{2}}[p_h]\|_{0,e}^2 \\
        & \leq C \Big(\|f\|_{0,\Omega_B}\|p_h\|_{0,\Omega_B}+\|\ell_\Gamma f_\Gamma\|_{0,\Gamma}\|p_{\Gamma,h}\|_{0,\Gamma}\Big).
    \end{align*}
    On the other hand, an application of the discrete Poincar\'{e}-Friedrichs inequality on anisotropic meshes (cf. \cite{DuanTan11}) leads to
    \begin{equation*}
        \|p_h\|_{0,\Omega_B}\leq C \|p_h\|_Z.
    \end{equation*}
    In view of the inf-sup condition \eqref{eq:inf-sup}, the discrete adjoint property \eqref{eq:adjoint},  and \eqref{eq:discrete}, we have
    \begin{equation*}
        C||p_h||_{0,\Omega_B}\leq C\|p_h\|_Z\leq \sup_{\bm{v}_h\in \bm{V}_h}\frac{b_h(\bm{v}_h, p_h)}{\|\bm{v}_h\|_{0,\Omega_B}}=\sup_{\bm{v}_h\in \bm{V}_h} \frac{b_h^*(p_h,\bm{v}_h)}{\|\bm{v}_h\|_{0,\Omega_B}}=\sup_{\bm{v}\in V_h}\frac{(K^{-1}\bm{u}_h,\bm{v}_h)}{\|\bm{u}_h\|_{0,\Omega_B}}\leq \|K^{-1}\bm{u}_h\|_{0,\Omega_B}.
    \end{equation*}
    Moreover, $p_{\Gamma,h}\in H^1_0(\Gamma)$ and the Poincar\'{e} inequality imply that
    \begin{equation*}
        \|p_{\Gamma,h}\|_{0,\Gamma}\leq C \|\nabla_t p_{\Gamma,h}\|_{0,\Gamma}.
    \end{equation*}
    Combining the above estimates with Young's inequality, we can infer that
    \begin{align*}
         & \|K^{-\frac{1}{2}}\bm{u}_h\|_{0,\Omega_B}^2+\|p_h\|_{0,\Omega_B}+\|K_\Gamma^{\frac{1}{2}}\nabla_t p_{\Gamma,h}\|_{0,\Gamma}^2
        +\sum_{e\in \mathcal{F}_h^\Gamma}\|\eta_\Gamma^{-\frac{1}{2}}[p_h]\|_{0,e}^2+\sum_{e\in \mathcal{F}_h^\Gamma}\|\alpha_\Gamma^{-\frac{1}{2}}(\{p_h\}-p_{\Gamma,h})\|_{0,e}^2 \\
         & \leq C \Big(\|f\|_{0,\Omega_B}^2+\|\ell_\Gamma f_\Gamma\|_{0,\Gamma}^2\Big),
    \end{align*}
    which gives the desired estimate \eqref{eq:stability}.
    Here, $C$ depends on the permeability $K$ and $K_\Gamma$.
    The uniqueness follows immediately by setting $f=f_\Gamma=0$.
\end{proof}

Here, we introduce the Ritz projection $\Pi_h^p p_\Gamma\in W_h$, which is defined by
\begin{equation}
    \langle K_\Gamma\nabla_t \Pi_h^pp_\Gamma, \nabla_t q_{\Gamma,h}\rangle_\Gamma
    = \langle K_\Gamma\nabla_t p_\Gamma, \nabla_t q_{\Gamma,h}\rangle_\Gamma
    \quad \forall q_{\Gamma,h}\in W_h.\label{eq:ritz-projection}
\end{equation}
It is well-posed by the Riesz representation theorem.
Then taking $q_{\Gamma,h}=\pi_hp_\Gamma-\Pi_hp_\Gamma$ in \eqref{eq:ritz-projection} yields
\begin{equation*}
    \|K_\Gamma^{1/2}\nabla_t (p_\Gamma-\Pi_h^pp_\Gamma)\|_{0,\Gamma}^2
    =\langle K_\Gamma \nabla_t ( p_\Gamma-\Pi_h^pp_\Gamma), \nabla_t (p_\Gamma-\pi_hp_\Gamma)\rangle_\Gamma,
\end{equation*}
which implies
\begin{equation*}
    \|K_\Gamma^{\frac{1}{2}}\nabla_t (p_\Gamma-\Pi_h^pp_\Gamma)\|_{0,\Gamma}
    \leq \|K_\Gamma^{\frac{1}{2}}\nabla_t (p_\Gamma-\pi_hp_\Gamma)\|_{0,\Gamma}
    \leq C \Big(\sum_{e\in \mathcal{F}_h^\Gamma}h_e^{2k}\|K_{\Gamma}^{\frac{1}{2}}p_\Gamma\|_{k+1,e}^2\Big)^{1/2}.
\end{equation*}
%where $K_{\Gamma,e}$ denotes the restriction of $K_\Gamma$ to edge $e\in \mathcal{F}_h^\Gamma$}.
Next, we show the $L^2$ error estimate for $\|p_\Gamma-\Pi_h^pp_\Gamma\|_{0,\Gamma}$. Consider the dual problem
\begin{equation}
    \begin{aligned}
        -\nabla_t \cdot (K_\Gamma\nabla_t \phi )
        &= p_\Gamma-\Pi_h^p p_\Gamma
        &&\mbox{on} \;\Gamma,\\
        \phi
        &=0
        &&\mbox{on}\;\partial \Gamma,
    \end{aligned}\label{eq:dual-poissonfracture1}
\end{equation}
which satisfies the following elliptic regularity estimate (cf. \cite{ChuHou10})
\begin{align*}
    (\sum_{e\in \mathcal{F}_h^\Gamma}\|K_\Gamma\phi\|_{2,e}^2)^{1/2}\leq C \|p_\Gamma-\Pi_h^p p_\Gamma\|_{0,\Gamma}.
\end{align*}
Multiplying \eqref{eq:dual-poissonfracture1} by $p_\Gamma-\Pi_h^p p_\Gamma$ and integration by parts, we can obtain
\begin{align*}
\|p_\Gamma-\Pi_h^p p_\Gamma\|_{0,\Gamma}^2= \langle K_\Gamma \nabla_t \phi, \nabla_t (p_\Gamma-\Pi_h^p p_\Gamma)\rangle_\Gamma.
\end{align*}
Owing to \eqref{eq:ritz-projection}, we can bound the above equation by
\begin{align*}
    \|p_\Gamma-\Pi_h^p p_\Gamma\|_{0,\Gamma}^2
    &= \langle K_\Gamma\nabla_t (\phi-\pi_h\phi), \nabla_t (p_\Gamma-\Pi_h^p p_\Gamma)\rangle_\Gamma\\
    &\leq \|K_\Gamma\nabla_t (\phi-\pi_h\phi)\|_{0,\Gamma}\|\nabla_t (p_\Gamma-\Pi_h^p p_\Gamma)\|_{0,\Gamma}\\
    &\leq C h(\sum_{e\in \mathcal{F}_h^\Gamma}\|K_{\Gamma}\phi\|_{2,e}^2)^{\frac{1}{2}}\|\nabla_t (p_\Gamma-\Pi_h^p p_\Gamma)\|_{0,\Gamma}\\
    &\leq C h\|p_\Gamma-\Pi_h^p p_\Gamma\|_{0,\Gamma}\|\nabla_t (p_\Gamma-\Pi_h^p p_\Gamma)\|_{0,\Gamma}.
\end{align*}
Thus
\begin{align}
\|p_\Gamma-\Pi_h^p p_\Gamma\|_{0,\Gamma}\leq C K_{\Gamma,\min}^{-\frac{1}{2}}h^{k+1}\Big(\sum_{e\in \mathcal{F}_h^\Gamma}\|K_{\Gamma}^{\frac{1}{2}}p_\Gamma\|_{k+1,e}^2\Big)^{1/2}.\label{eq:dual-L2f}
\end{align}
%Then in the following, we use $\Pi_h^p$ to replace $\pi_h$. Then we can get $h^{k+1}$ for the next Theorem. Namely there is superconvergence for $\|K_\Gamma^{\frac{1}{2}}\nabla_t (\Pi_h^p p_\Gamma-p_{\Gamma,h})\|_{0,\Gamma}$.
With help of the Ritz projection, we derive \textit{a priori} error estimates.
Note that the following theorem states the optimal convergence for $L^2$ error of the flux, $||K^{-\frac{1}{2}}(\bm{u}-\bm{u}_h)||_{0,\Omega}$, and superconvergence for semi-$H^1$ error of the pressure on the fracture, $||K^{\frac{1}{2}}_\Gamma\nabla_t(\Pi_h^pp_\Gamma-p_{\Gamma,h})||_{0,\Gamma}$.

\begin{theorem}\label{thm:energyError}
    %Assume that $(\bm{u},p,p_{\Gamma})\in H^{k+1}(\Omega_B)^2\times H^{k+1}(\Omega_B)\times H^{k+1}(\Gamma)$.
    Under Assumption (A), there exists a positive constant $C$ independent of $h$ and of the problem data, but possibly depending on $\rho_S$ such that
    \begin{equation*}
        \begin{aligned}
         & \|K^{-\frac{1}{2}}(J_h \bm{u}-\bm{u}_h)\|_{0,\Omega_B}+\|K_\Gamma^{\frac{1}{2}}\nabla_t (\Pi_h^p  p_\Gamma-p_{\Gamma,h})\|_{0,\Gamma}\\
         &+\Big(\sum_{e\in \mathcal{F}_h^\Gamma}\|\eta_\Gamma^{-\frac{1}{2}}[I_hp-p_h]\|_{0,e}^2\Big)^{\frac{1}{2}}+\Big(\sum_{e\in \mathcal{F}_h^\Gamma}\|\alpha_\Gamma^{-\frac{1}{2}}(\{I_hp-p_h\}-(\Pi_h^p  p_\Gamma-p_{\Gamma,h}))\|_{0,e}^2\Big)^{\frac{1}{2}}\\
         & \;\leq C \Big(\|K^{-\frac{1}{2}}(\bm{u}-J_h\bm{u})\|_{0,\Omega_B}^2
        +\|\alpha_\Gamma^{-\frac{1}{2}}(p_\Gamma-\Pi_h^p p_\Gamma)\|_{0,\Gamma}^2\Big).
        \end{aligned}
    \end{equation*}
\end{theorem}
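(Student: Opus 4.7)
\medskip
\noindent\textbf{Proof proposal.} The plan is the classical Galerkin-orthogonality energy argument, with the twist that the Ritz projection $\Pi_h^p$ has been engineered precisely so that the $\nabla_t$-coupling between bulk and fracture decouples. First I would subtract the discrete system \eqref{eq:weak} from its continuous counterpart (the exact solution $(\bm{u},p,p_\Gamma)$ satisfies the same identity, as one verifies by retracing the derivation of \eqref{eq:discrete}) to obtain the error equation in the variables $\bm{u}-\bm{u}_h,\;p-p_h,\;p_\Gamma-p_{\Gamma,h}$. Then I split
\begin{equation*}
\bm{u}-\bm{u}_h=(\bm{u}-J_h\bm{u})+(J_h\bm{u}-\bm{u}_h),\quad p-p_h=(p-I_hp)+(I_hp-p_h),\quad p_\Gamma-p_{\Gamma,h}=(p_\Gamma-\Pi_h^p p_\Gamma)+(\Pi_h^p p_\Gamma-p_{\Gamma,h}),
\end{equation*}
and introduce the abbreviations $E_u:=J_h\bm{u}-\bm{u}_h,\;E_p:=I_hp-p_h,\;E_{p_\Gamma}:=\Pi_h^p p_\Gamma-p_{\Gamma,h}$.

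Next I would use the already-recorded orthogonalities $b_h(\bm{u}-J_h\bm{u},q)=0$ and $b_h^*(p-I_hp,\bm{v})=0$, together with the Ritz identity \eqref{eq:ritz-projection}, to kill three of the projection-error contributions. The remaining projection-error terms on the fracture edges involve $\{p-I_hp\}$ and $[p-I_hp]$ tested against $(\{q\}-q_\Gamma)$ and $[q]$; here I would invoke the defining moment conditions (SD3) of $I_h$ on each side $i=1,2$, which give $\langle (p-I_hp)|_{\Omega_{B,i}},\psi\rangle_e=0$ for every $\psi\in P^k(e)$ and $e\in\mathcal{F}_h^\Gamma$. Since any $q\in S_h$ and $q_\Gamma\in W_h$ is piecewise polynomial of degree $k$ on $\mathcal{F}_h^\Gamma$, both $\{q\}-q_\Gamma$ and $[q]$ lie in $P^k(e)$ and $\alpha_\Gamma,\eta_\Gamma$ are piecewise constants, so these two edge contributions also vanish. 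Consequently the error equation reduces to
\begin{equation*}
\begin{split}
&(K^{-1}E_u,\bm{v})_{\Omega_B}+b_h^*(E_p,\bm{v})-b_h(E_u,q)+\sum_{e\in\mathcal{F}_h^\Gamma}\bigl\langle\tfrac{1}{\alpha_\Gamma}(\{E_p\}-E_{p_\Gamma}),\{q\}-q_\Gamma\bigr\rangle_e\\
&+\sum_{e\in\mathcal{F}_h^\Gamma}\bigl\langle\tfrac{1}{\eta_\Gamma}[E_p],[q]\bigr\rangle_e+\langle K_\Gamma\nabla_t E_{p_\Gamma},\nabla_t q_\Gamma\rangle_\Gamma\\
&=-(K^{-1}(\bm{u}-J_h\bm{u}),\bm{v})_{\Omega_B}+\sum_{e\in\mathcal{F}_h^\Gamma}\bigl\langle\tfrac{1}{\alpha_\Gamma}(p_\Gamma-\Pi_h^p p_\Gamma),\{q\}-q_\Gamma\bigr\rangle_e.
\end{split}
\end{equation*}

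Now I would test with $(\bm{v},q,q_\Gamma)=(E_u,E_p,E_{p_\Gamma})$. The adjoint identity \eqref{eq:adjoint} makes the $b_h$ and $b_h^*$ terms cancel exactly, so the left-hand side collapses to
\begin{equation*}
\|K^{-1/2}E_u\|_{0,\Omega_B}^2+\sum_{e\in\mathcal{F}_h^\Gamma}\|\alpha_\Gamma^{-1/2}(\{E_p\}-E_{p_\Gamma})\|_{0,e}^2+\sum_{e\in\mathcal{F}_h^\Gamma}\|\eta_\Gamma^{-1/2}[E_p]\|_{0,e}^2+\|K_\Gamma^{1/2}\nabla_t E_{p_\Gamma}\|_{0,\Gamma}^2,
\end{equation*}
which is precisely the squared left-hand side of the claim. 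Applying Cauchy--Schwarz and Young's inequality to the two right-hand side terms, and absorbing a factor $\tfrac{1}{2}\|K^{-1/2}E_u\|^2$ and $\tfrac{1}{2}\sum\|\alpha_\Gamma^{-1/2}(\{E_p\}-E_{p_\Gamma})\|^2$ into the left, then yields the asserted bound (I read the exponents on the right-hand side of the theorem as squared, matching the natural output of the energy argument).

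The main obstacle is the fracture-edge bookkeeping: one must verify carefully that the bulk pressure projection error $\delta_p=p-I_hp$ interacts cleanly with both $[q]$ and $\{q\}-q_\Gamma$ on $\mathcal{F}_h^\Gamma$. This hinges critically on the choice of degrees of freedom (SD3), where $I_h$ is defined \emph{side-by-side} on the fracture (the two trace moments on $\Omega_{B,1}$ and $\Omega_{B,2}$ are prescribed independently) rather than by the continuous convention (SD1). Without this two-sided interpolation there would be a leftover residual on $\mathcal{F}_h^\Gamma$ that the Ritz projection cannot absorb, and the estimate would lose its sharpness (and its robustness in $\alpha_\Gamma$). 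Once this algebraic reduction is in place, the rest of the argument is essentially an exercise in Cauchy--Schwarz, with the $K$- and $\alpha_\Gamma$-weighted norms delivering the heterogeneity/anisotropy robustness advertised in the abstract.
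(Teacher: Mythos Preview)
Your proposal is correct and follows essentially the same approach as the paper: derive the consistency/error equations, test with $(J_h\bm{u}-\bm{u}_h,\,I_hp-p_h,\,\Pi_h^p p_\Gamma-p_{\Gamma,h})$, use the orthogonalities of $J_h$, $I_h$ (including the two-sided fracture moments (SD3)) and the Ritz identity \eqref{eq:ritz-projection} to eliminate all projection-error terms except the two you isolate on the right, and finish with Cauchy--Schwarz and Young. Your emphasis on the role of (SD3) in killing the $\{p-I_hp\}$ and $[p-I_hp]$ contributions on $\mathcal{F}_h^\Gamma$ is exactly the mechanism the paper invokes when it writes ``the properties of $I_h$ and $J_h$''.
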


\begin{proof}
    % Replacing $\bm{u}_h, p_h,p_{\Gamma,h}$ by $\bm{u},p,p_\Gamma$ and performing integration by parts on \eqref{eq:discrete}, we can obtain the following error equations
    \Red{Our discrete formulation \eqref{eq:discrete} is consistent due to its derivation.}
    Thereby we can obtain the following error equations
    \begin{align}
        (K^{-1}(\bm{u}-\bm{u}_h), \bm{v})_{\Omega_B}+b_h^*(p-p_h,\bm{v})
         & =0,\label{eq:err1}   \\
        -b_h(\bm{u}-\bm{u}_h, q)
        +\sum_{e\in \mathcal{F}_h^\Gamma}\langle \frac{1}{\alpha_\Gamma}(\{p-p_h\}-(p_\Gamma-p_{\Gamma,h})),\{q\}\rangle_e
        +\sum_{e\in \mathcal{F}_h^\Gamma}\langle \frac{1}{\eta_\Gamma}[p-p_h],[q]\rangle_e
         & =0\;,\label{eq:err2} \\
        \langle K_\Gamma \nabla_t (p_\Gamma-p_{\Gamma,h}), \nabla_t q_{\Gamma}\rangle_\Gamma
        -\sum_{e\in \mathcal{F}_h^\Gamma}\langle\frac{1}{\alpha_\Gamma}\{p-p_h\}, q_{\Gamma}\rangle_e
        +\sum_{e\in \mathcal{F}_h^\Gamma}\langle\frac{1}{\alpha_\Gamma}( p_\Gamma-p_{\Gamma,h}), q_{\Gamma}\rangle_e
         & =0,\label{eq:err3}   \\
        \forall (\bm{v}, q, q_{\Gamma})\in \bm{V}_h\times S_h\times W_h.
    \end{align}

    Taking $\bm{v}=J_h \bm{u}-\bm{u}_h, q= I_hp-p_h, q_{\Gamma} = \Pi_h^p p_\Gamma-p_{\Gamma,h}$ in \eqref{eq:err1}-\eqref{eq:err3} and adding the resulting equations, we can obtain
    \begin{equation}
        \begin{aligned}
             & (K^{-1} (J_h\bm{u}-\bm{u}_h),J_h\bm{u}-\bm{u}_h)_{\Omega_B}
            +\langle K_\Gamma \nabla_t (\Pi_h^p  p_\Gamma-p_{\Gamma,h}),\nabla_t (\Pi_h^p  p_\Gamma-p_{\Gamma,h})\rangle_\Gamma \\
             & \;+\sum_{e\in \mathcal{F}_h^\Gamma}\langle\frac{1}{\eta_\Gamma}[p-p_h],[I_hp-p_h]\rangle_e
            +\sum_{e\in \mathcal{F}_h^\Gamma}\langle\frac{1}{\alpha_\Gamma}(\{p-p_h\}-(p_\Gamma-p_{\Gamma,h})),
            \{I_hp-p_h\}-(\Pi_h^p p_\Gamma-p_{\Gamma,h})\rangle_e                                                       \\
             & \qquad=(K^{-1}(J_h\bm{u}-\bm{u}),J_h\bm{u}-\bm{u}_h)_{\Omega_B}
            +\langle K_\Gamma \nabla_t (\Pi_h^p  p_\Gamma-p_{\Gamma}),\nabla_t (\Pi_h^p  p_\Gamma-p_{\Gamma,h})\rangle_\Gamma.
        \end{aligned}
        \label{eq:error-divide}
    \end{equation}
    It then follows from the Cauchy-Schwarz inequality and the properties of $I_h$ nd $J_h$ that
    \begin{align*}
         & \|K^{-\frac{1}{2}}(J_h \bm{u}-\bm{u}_h)\|_{0,\Omega_B}^2
        +\|K_\Gamma^{\frac{1}{2}}\nabla_t (\Pi_h^p  p_\Gamma-p_{\Gamma,h})\|_{0,\Gamma}^2
        +\sum_{e\in \mathcal{F}_h^\Gamma}\|\eta_\Gamma^{-\frac{1}{2}}[I_hp-p_h]\|_{0,e}^2                                             \\
         & \;+\sum_{e\in \mathcal{F}_h^\Gamma}\|\alpha_\Gamma^{-\frac{1}{2}}(\{I_hp-p_h\}-(\Pi_h^p  p_\Gamma-p_{\Gamma,h}))\|_{0,e}^2 \\
         & =(K^{-1}(J_h\bm{u}-\bm{u}),J_h\bm{u}-\bm{u}_h)_{\Omega_B}
        +\langle K_\Gamma\nabla_t (\Pi_h^p  p_\Gamma- p_{\Gamma}),\nabla_t (\Pi_h^p  p_\Gamma-p_{\Gamma,h})\rangle_\Gamma                \\
         & \;+\sum_{e\in \mathcal{F}_h^\Gamma}\langle \frac{1}{\alpha_\Gamma}(p_\Gamma-\Pi_h^p  p_\Gamma),
        \{I_hp-p_h\}-(\Pi_h^p  p_\Gamma-p_{\Gamma,h})\rangle_e                                                                          \\
         & \leq C \Big(\|K^{-\frac{1}{2}}(J_h\bm{u}-\bm{u})\|_{0,\Omega_B} \|K^{-\frac{1}{2}}(J_h\bm{u}-\bm{u}_h)\|_{0,\Omega_B}   \\
         & \;+\sum_{e\in \mathcal{F}_h^\Gamma}\|\alpha_\Gamma^{-\frac{1}{2}}(\{I_hp-p_h\}-(\Pi_h^p  p_\Gamma-p_{\Gamma,h}))\|_{0,e}
        \|\alpha_\Gamma^{-\frac{1}{2}}(p_\Gamma-\Pi_h^p  p_\Gamma)\|_{0,e}\Big).
    \end{align*}
    Combined with Young's inequality, this leads to
    \begin{align*}
         & \|K^{-\frac{1}{2}}(J_h \bm{u}-\bm{u}_h)\|_{0,\Omega_B}^2
        +\|K_\Gamma^{\frac{1}{2}}\nabla_t (\Pi_h^p  p_\Gamma-p_{\Gamma,h})\|_{0,\Gamma}^2
        +\sum_{e\in \mathcal{F}_h^\Gamma}\|\eta_\Gamma^{-\frac{1}{2}}[I_hp-p_h]\|_{0,e}^2                                        \\
         & \;+\sum_{e\in \mathcal{F}_h^\Gamma}\|\alpha_\Gamma^{-\frac{1}{2}}(\{I_hp-p_h\}-(\Pi_h^p  p_\Gamma-p_{\Gamma,h}))\|_{0,e}^2 \\
         & \;\leq C \Big(\|K^{-\frac{1}{2}}(\bm{u}-J_h\bm{u})\|_{0,\Omega_B}^2
        +\|\alpha_\Gamma^{-\frac{1}{2}}(p_\Gamma-\Pi_h^p p_\Gamma)\|_{0,\Gamma}^2\Big).
    \end{align*}
    Therefore, the proof is completed.
    %Exploiting \eqref{eq:interpolationIhJh}-\eqref{eq:interpolationF} completes the proof.
\end{proof}

\begin{corollary}\label{cor:L2err}
     \Red{Assume that $(\bm{u}\mid_\tau, p\mid_\tau,p_{\Gamma}\mid_e)\in H^{k+1}(\tau)^2\times H^{k+1}(\tau)\times H^{k+1}(e)$ for $\tau\in \mathcal{T}_h$ and $e\in \mathcal{F}_h^\Gamma$}. Then under the assumption of Theorem~\ref{thm:energyError}, there exists a positive constant $C$ independent of $h$ and of the problem data, but possibly depending on $\rho_S$ such that
    \begin{align*}
     \|K^{-\frac{1}{2}}(\bm{u}-\bm{u}_h)\|_{0,\Omega_B}&\leq C   \Big(\sum_{\tau\in \mathcal{T}_h} K_\tau^{-1}h_\tau^{2(k+1)}\|\bm{u}\|_{k+1,\tau}^2 +\alpha_1^{-1}K_{\Gamma,\min}^{-1}h^{2(k+1)}\Big(\sum_{e\in \mathcal{F}_h^\Gamma}\|K_{\Gamma}^{\frac{1}{2}}p_\Gamma\|_{k+1,e}^2\Big)\Big)^{\frac{1}{2}},\\
     \|p_\Gamma-p_{\Gamma,h}\|_{0,\Gamma}&\leq C\Big(K_{\Gamma,\min}^{-\frac{1}{2}}\Big(\sum_{\tau\in \mathcal{T}_h} K_\tau^{-1}h_\tau^{2(k+1)}\|\bm{u}\|_{k+1,\tau}^2+
        \alpha_1^{-1}\sum_{e\in\mathcal{F}_h^\Gamma}h_e^{2(k+1)}\|p_\Gamma\|_{k+1,e}^2\Big)^{\frac{1}{2}}\\
        &\;+K_{\Gamma,\min}^{-\frac{1}{2}}h^{k+1}\Big(\sum_{e\in \mathcal{F}_h^\Gamma}\|K_{\Gamma}^{\frac{1}{2}}p_\Gamma\|_{k+1,e}^2\Big)^{\frac{1}{2}}\Big)
     \end{align*}
    and
    \begin{align*}
        \|p-p_h\|_{0,\Omega_B}&\leq C \Big(K_1^{-1}\Big(\sum_{\tau\in \mathcal{T}_h} K_\tau^{-1}h_\tau^{2(k+1)}\|\bm{u}\|_{k+1,\tau}^2 +\alpha_1^{-1}\sum_{e\in\mathcal{F}_h^\Gamma}h_e^{2(k+1)}\|p_\Gamma\|_{k+1,e}^2\Big)+\sum_{\tau\in \mathcal{T}_h}h_\tau^{2(k+1)}\|p\|_{k+1,\tau}^2\Big)^{\frac{1}{2}},
    \end{align*}
    where $\alpha_1:=\min\{\alpha_\Gamma\}$, $K_{\Gamma,\min}:=\min\{K_\Gamma\}$ and $K_\tau$ is the smallest eigenvalue of $K\mid_\tau$.
    In addition, we also have the following superconvergent results
    \begin{align*}
        \|I_hp-p_h\|_Z\leq C K_1^{-\frac{1}{2}}  \Big(\sum_{\tau\in \mathcal{T}_h} K_\tau^{-1}h_\tau^{2(k+1)}\|\bm{u}\|_{k+1,\tau}^2 +\alpha_1^{-1}K_{\Gamma,\min}^{-1}h^{2(k+1)}\Big(\sum_{e\in \mathcal{F}_h^\Gamma}\|K_{\Gamma}^{\frac{1}{2}}p_\Gamma\|_{k+1,e}^2\Big)\Big)^{\frac{1}{2}}.
    \end{align*}

\end{corollary}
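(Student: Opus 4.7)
The plan is to assemble the four bounds from the energy estimate of Theorem~\ref{thm:energyError} together with the Ritz projection $L^2$ estimate \eqref{eq:dual-L2f}, the local interpolation estimates \eqref{eq:interpolationIhJh-local}, the inf-sup condition \eqref{eq:inf-sup}, and the discrete Poincar\'e--Friedrichs inequality. The arguments proceed by triangle inequalities around the interpolants $J_h\bm{u}$, $I_hp$, and the Ritz projection $\Pi_h^p p_\Gamma$, and the main bookkeeping task is to track the constants $K_1$, $K_\tau$, $K_{\Gamma,\min}$, $\alpha_1$ so that the resulting bounds are robust with respect to the permeability.

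First I would derive the flux estimate. By the triangle inequality
\begin{equation*}
\|K^{-\frac{1}{2}}(\bm{u}-\bm{u}_h)\|_{0,\Omega_B}
\leq \|K^{-\frac{1}{2}}(\bm{u}-J_h\bm{u})\|_{0,\Omega_B}
+\|K^{-\frac{1}{2}}(J_h\bm{u}-\bm{u}_h)\|_{0,\Omega_B}.
\end{equation*}
The first term is controlled element-wise by the anisotropic interpolation estimate \eqref{eq:interpolationIhJh-local} with the factor $K_\tau^{-1/2}$, while Theorem~\ref{thm:energyError} reduces the second term to the same interpolation error plus $\|\alpha_\Gamma^{-1/2}(p_\Gamma-\Pi_h^p p_\Gamma)\|_{0,\Gamma}$, which is estimated via \eqref{eq:dual-L2f} using $\alpha_\Gamma\geq \alpha_1$. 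This already contains the $\alpha_1^{-1}K_{\Gamma,\min}^{-1}h^{2(k+1)}$ contribution that appears in the corollary.

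Next, for the fracture pressure I would split
\begin{equation*}
\|p_\Gamma-p_{\Gamma,h}\|_{0,\Gamma}
\leq \|p_\Gamma-\Pi_h^p p_\Gamma\|_{0,\Gamma}+\|\Pi_h^p p_\Gamma-p_{\Gamma,h}\|_{0,\Gamma}.
\end{equation*}
The first term is exactly \eqref{eq:dual-L2f}. For the second, since $\Pi_h^p p_\Gamma-p_{\Gamma,h}\in H^1_0(\Gamma)$, the Poincar\'e inequality gives
$\|\Pi_h^p p_\Gamma-p_{\Gamma,h}\|_{0,\Gamma}\leq C K_{\Gamma,\min}^{-1/2}\|K_\Gamma^{1/2}\nabla_t(\Pi_h^p p_\Gamma-p_{\Gamma,h})\|_{0,\Gamma}$, and the right-hand side is bounded by Theorem~\ref{thm:energyError} combined with interpolation and \eqref{eq:dual-L2f}.

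For the superconvergent estimate $\|I_hp-p_h\|_Z$, the inf-sup condition \eqref{eq:inf-sup} reduces the task to bounding $b_h(\bm{v},I_hp-p_h)$. Since $b_h^*(p-I_hp,\bm{v})=0$ for all $\bm{v}\in\bm{V}_h$, the adjoint property \eqref{eq:adjoint} yields $b_h(\bm{v},I_hp-p)=0$, so by the error equation \eqref{eq:err1},
\begin{equation*}
b_h(\bm{v},I_hp-p_h)=b_h(\bm{v},p-p_h)=b_h^*(p-p_h,\bm{v})=-(K^{-1}(\bm{u}-\bm{u}_h),\bm{v})_{\Omega_B},
\end{equation*}
whence $\|I_hp-p_h\|_Z\leq C K_1^{-1/2}\|K^{-1/2}(\bm{u}-\bm{u}_h)\|_{0,\Omega_B}$ and the flux estimate above gives the stated superconvergence. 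Finally, for the bulk pressure I would write $\|p-p_h\|_{0,\Omega_B}\leq \|p-I_hp\|_{0,\Omega_B}+\|I_hp-p_h\|_{0,\Omega_B}$, apply \eqref{eq:interpolationIhJh-local} to the first summand, and combine the discrete Poincar\'e--Friedrichs inequality on anisotropic meshes (used in the proof of Theorem~\ref{thm:stability}) with the superconvergent $Z$-bound just obtained to handle the second. The main obstacle in carrying this out is purely bookkeeping: threading $K^{-1/2}$, $K_{\Gamma,\min}^{-1/2}$, and $\alpha_1^{-1/2}$ through each application of Cauchy--Schwarz and Young's inequality so that the final constants genuinely depend only on $\rho_S$ and the stated norms of the permeability.
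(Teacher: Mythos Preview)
Your proposal is correct and follows essentially the same route as the paper: triangle inequalities around $J_h\bm{u}$, $\Pi_h^p p_\Gamma$, and $I_hp$, combined with Theorem~\ref{thm:energyError}, the Ritz projection estimate \eqref{eq:dual-L2f}, the Poincar\'e inequality on $H^1_0(\Gamma)$, and the inf-sup condition \eqref{eq:inf-sup} together with \eqref{eq:err1} and the interpolation property $b_h^*(p-I_hp,\bm{v})=0$ for the superconvergent $Z$-bound, followed by the discrete Poincar\'e--Friedrichs inequality for $\|I_hp-p_h\|_{0,\Omega_B}$. Your write-up is in fact slightly more explicit than the paper's in justifying the step $b_h(\bm{v},I_hp-p_h)=-(K^{-1}(\bm{u}-\bm{u}_h),\bm{v})_{\Omega_B}$, but the argument is the same.
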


\begin{proof}
    Since $\Pi_h^p p_\Gamma-p_{\Gamma,h}$ belongs to $H^1_0(\Gamma)$, we have from Poincar\'{e} inequality and Theorem~\ref{thm:energyError} that
    % \begin{align*}
    %     \|K_\Gamma^{\frac{1}{2}}(\Pi_h^p p_\Gamma-p_{\Gamma,h})\|_{0,\Gamma}&\leq C\|K_\Gamma^{\frac{1}{2}}\nabla (\Pi_h^p p_\Gamma-p_{\Gamma,h})\|_{0,\Gamma}\\
    %     &\leq \Big(\sum_{\tau\in \mathcal{T}_h} K_\tau^{-1}h_\tau^{2(k+1)}\|\bm{u}\|_{k+1,\tau}^2 +\alpha_1^{-1}\sum_{e\in\mathcal{F}_h^\Gamma} h_e^{2(k+1)}\|p_\Gamma\|_{k+1,e}^2\Big)^{\frac{1}{2}},
    % \end{align*}
    \begin{align*}
       \|\Pi_h^pp_\Gamma-p_{\Gamma,h}\|_{0,\Gamma}\leq C\|\nabla_t(\Pi_h^pp_\Gamma-p_{\Gamma,h})\|_{0,\Gamma}\leq CK_{\Gamma,\min}^{-\frac{1}{2}}\|K_\Gamma^{\frac{1}{2}}\nabla_t(\Pi_h^pp_\Gamma-p_{\Gamma,h})\|_{0,\Gamma},
    \end{align*}
    which together with \eqref{eq:dual-L2f} and Theorem~\ref{thm:energyError} implies
    \begin{align*}
        \|p_\Gamma-p_{\Gamma,h}\|_{0,\Gamma}&\leq C\Big(K_{\Gamma,\min}^{-\frac{1}{2}}\Big(\sum_{\tau\in \mathcal{T}_h} K_\tau^{-1}h_\tau^{2(k+1)}\|\bm{u}\|_{k+1,\tau}^2+
        \alpha_1^{-1}\sum_{e\in\mathcal{F}_h^\Gamma}h_e^{2(k+1)}\|p_\Gamma\|_{k+1,e}^2\Big)^{\frac{1}{2}}\\
        &\;+K_{\Gamma,\min}^{-\frac{1}{2}}h^{k+1}\Big(\sum_{e\in \mathcal{F}_h^\Gamma}\|K_{\Gamma}^{\frac{1}{2}}p_\Gamma\|_{k+1,e}^2\Big)^{1/2}\Big).
    \end{align*}
    In addition, we also have from \eqref{eq:interpolationIhJh-local} and Theorem~\ref{thm:energyError} that
    \begin{align*}
        \|K^{-\frac{1}{2}}(\bm{u}-\bm{u}_h)\|_{0,\Omega_B}\leq C   \Big(\sum_{\tau\in \mathcal{T}_h} K_\tau^{-1}h_\tau^{2(k+1)}\|\bm{u}\|_{k+1,\tau}^2 +\alpha_1^{-1}K_{\Gamma,\min}^{-1}h^{2(k+1)}\Big(\sum_{e\in \mathcal{F}_h^\Gamma}\|K_{\Gamma}^{\frac{1}{2}}p_\Gamma\|_{k+1,e}^2\Big)\Big)^{\frac{1}{2}}.
    \end{align*}
    Finally, it follows from the inf-sup condition \eqref{eq:inf-sup} and \eqref{eq:err1} that
    \begin{align*}
        \|I_hp-p_h\|_Z&\leq C\|K^{-1}(\bm{u}-\bm{u}_h)\|_0\\
        &\leq C K_1^{-\frac{1}{2}}  \Big(\sum_{\tau\in \mathcal{T}_h} K_\tau^{-1}h_\tau^{2(k+1)}\|\bm{u}\|_{k+1,\tau}^2 +\alpha_1^{-1}K_{\Gamma,\min}^{-1}h^{2(k+1)}\Big(\sum_{e\in \mathcal{F}_h^\Gamma}\|K_{\Gamma}^{\frac{1}{2}}p_\Gamma\|_{k+1,e}^2\Big)\Big)^{\frac{1}{2}},
    \end{align*}
    which can be combined with the discrete Poincar\'{e} inequality yields
    \begin{align*}
        \|I_hp-p_h\|_{0,\Omega_B}&\leq C \|I_hp-p_h\|_Z\\
        &\leq C K_1^{-\frac{1}{2}}  \Big(
            \sum_{\tau\in \mathcal{T}_h} K_\tau^{-1}h_\tau^{2(k+1)}\|\bm{u}\|_{k+1,\tau}^2
            +\alpha_1^{-1}K_{\Gamma,\min}^{-1}h^{2(k+1)}\Big(\sum_{e\in \mathcal{F}_h^\Gamma}\|K_{\Gamma}^{\frac{1}{2}}p_\Gamma\|_{k+1,e}^2\Big)\Big)^{\frac{1}{2}}.
    \end{align*}
    Thus
    \begin{align*}
        \|p-p_h\|_{0,\Omega_B}&\leq C \Big(K_1^{-1}  \Big(\sum_{\tau\in \mathcal{T}_h} K_\tau^{-1}h_\tau^{2(k+1)}\|\bm{u}\|_{k+1,\tau}^2 +\alpha_1^{-1}K_{\Gamma,\min}^{-1}h^{2(k+1)}\Big(\sum_{e\in \mathcal{F}_h^\Gamma}\|K_{\Gamma}^{\frac{1}{2}}p_\Gamma\|_{k+1,e}^2\Big)\Big)\\
        &\;+
        \sum_{\tau\in\mathcal{T}_h}h_\tau^{2(k+1)}\|p\|_{k+1,\tau}^2\Big)^{\frac{1}{2}}.
    \end{align*}
\end{proof}

\begin{remark}
\rm{
The introduction of the Ritz projection is important to deliver optimal convergence estimates. Our methodology is based on the observation that the second term on the right side of \eqref{eq:error-divide} is actually the troublemaker, which can be cancelled if the Ritz projection is exploited. On the other hand, thanks to the Ritz projection we can achieve the optimal convergence estimates in $L^2$ errors of bulk pressure and fracture pressure without resort to duality argument as usually required by standard methods.
Alternatively, we obtain optimal convergence estimates for all the variables, which are fully robust with respect to the heterogeneity of $K$ and $K_\Gamma$, and the anisotropy of the bulk permeability. In addition, our analysis weakens the usual assumption on the polygonal mesh. All these desirable features make our method a good candidate for practical applications.
}
\end{remark}

%\begin{remark}
%    \rm{
%        In this section, we prove convergence estimates for $p,p_\Gamma$ and $\bm{u}$.
%        %
%        Under Assumption (A) we can prove the optimal convergence rates for $p$ and $p_\Gamma$ and a suboptimal convergence rate for $\bm{u}$.
%        %
%        In fact, the optimal convergence rate for $L^2$ error of $\bm{u}$ is not trivial due to the coupling of flux error with the energy norm of $p_\Gamma$, which leads to one order lower convergence for $\bm{u}$.
%        %
%        To recover the optimal convergence, we first show a discrete trace inequality (cf. Lemma~\ref{lemma:trace}), then with the help of $L^2$ errors of $p$ and $p_\Gamma$, we are able to achieve the optimal convergence for $\bm{u}$ at a price of employing shape regularity assumption.
%    }
%\end{remark}

\section{Numerical experiments}\label{sec:numerical}

In this section, we will present several numerical experiments to confirm the validity of the \textit{a priori} error estimates that we have derived for our method.
To demonstrate the robustness of our method with respect to general meshes, we employ regular polygonal grids, polygonal grids with small edges and anisotropic grids.
In addition, to further verify that our method can handle more complicated problems, we test the performances of our method on the case that the background grid is not aligned with the fracture.
%We will outline the steps required to modify elements with curved boundaries.
%Since for curved elements we can not directly generate the mesh nodes which can exactly lie on the curved boundaries, some essential steps will be needed to adjust the mesh vertices so that they lie on the curved boundaries. The following three steps are essential in correlating the mesh nodes with the curved boundaries: First, we identify the element edges that need to be curved; Second, we ensure the vertices and edge nodes forming these element edges are pushed onto the curvilinear boundary; Third, we update the volume node locations in the elements that have vertices on the curvilinear boundary. The above three steps will enable us to achieve the element with desired node location which correlate with the curved boundaries, see Figure~\ref{fig:curved-nodes} for an illustration of $k=4$ with properly assigned nodes. As for the integration on the curved element including volume integration and edge integration, we will exploit the iso-parametric mapping and cubature based integration, more discussions in this direction can be referred to \cite{Hesthaven-NodalDG}.
%
%
%\begin{figure}[H]
%\centering
%\includegraphics[width=7cm]{figs/curved_triangle.eps}
%\caption{Schematic of nodes in a curved triangle for $k=4$: the red dots denote the deformed nodes to a circle and the green line denotes part of a circle.}
%\label{fig:curved-nodes}
%\end{figure}

\subsection{Convergence test}\label{subsec:example1}

\begin{figure}[t]
    \centering
    \includegraphics[width=0.45\textwidth]{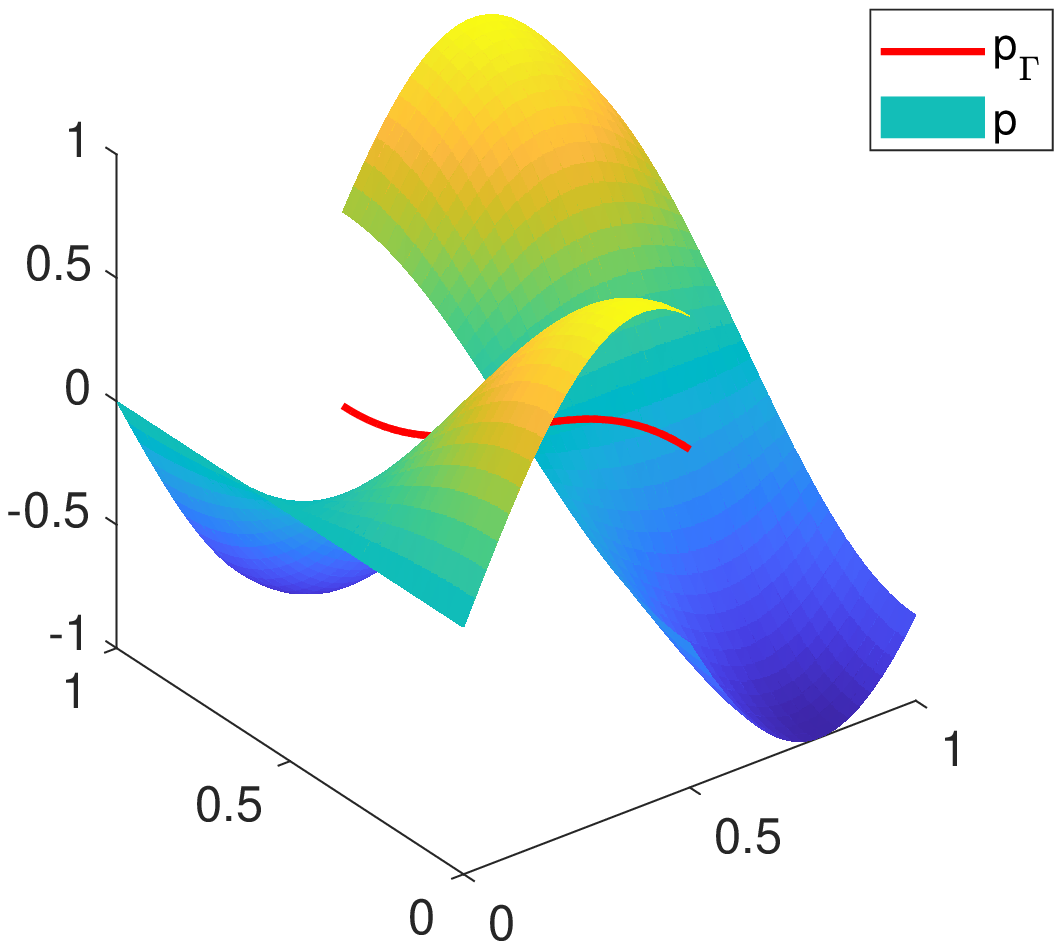}
    \includegraphics[width=0.45\textwidth]{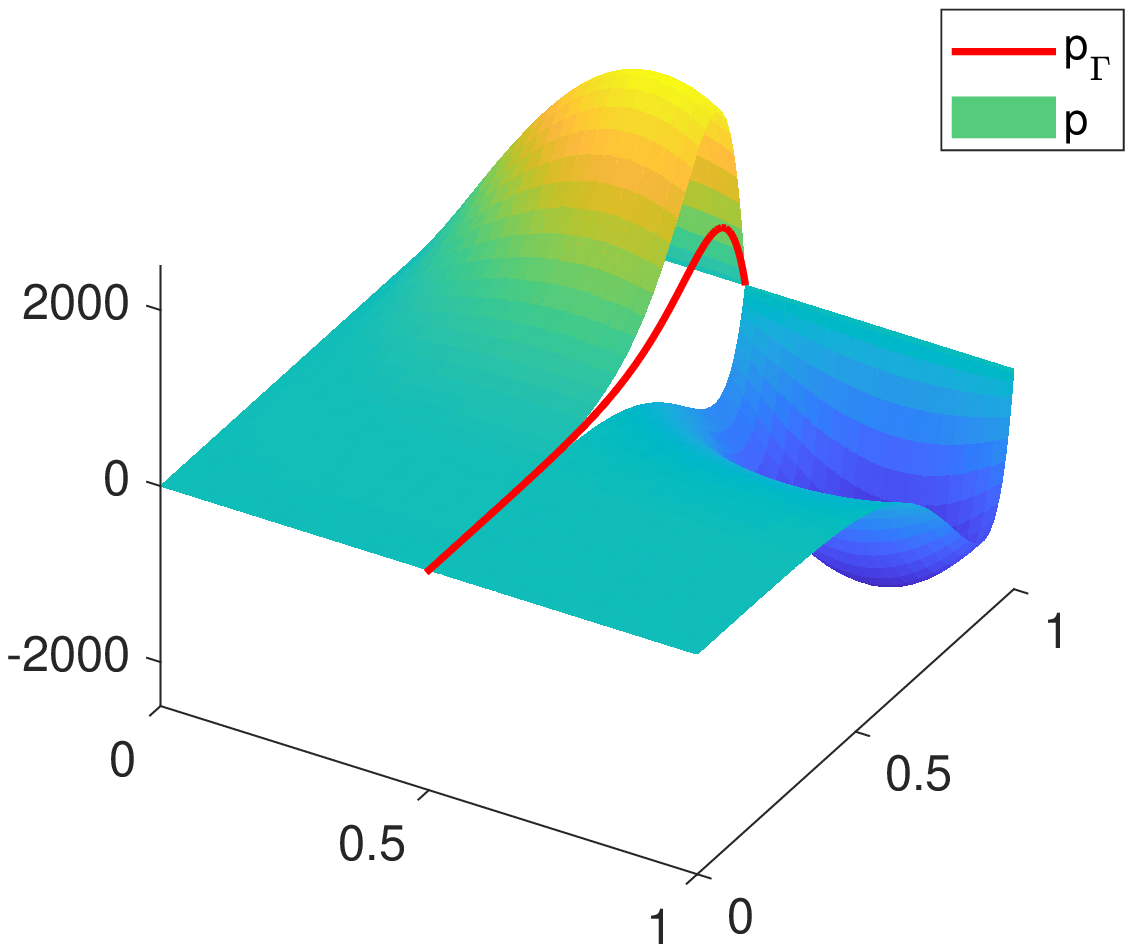}
    % \includegraphics[width=0.45\textwidth]{figs/ex2_sol.eps}
    % \caption{Graphs of solutions $p$ (surface plot) and $p_\Gamma$ (red line) for Example 1 (left) and Example 2 (right).}
    \caption{Graphs of solutions $p$ and $p_\Gamma$ for Example~\ref{subsec:example1} (left) and Example~\ref{subsec:example3} (right).}
    \label{fig:solshape}
\end{figure}

\begin{figure}[t]
    \centering
    \includegraphics[width=0.32\textwidth]{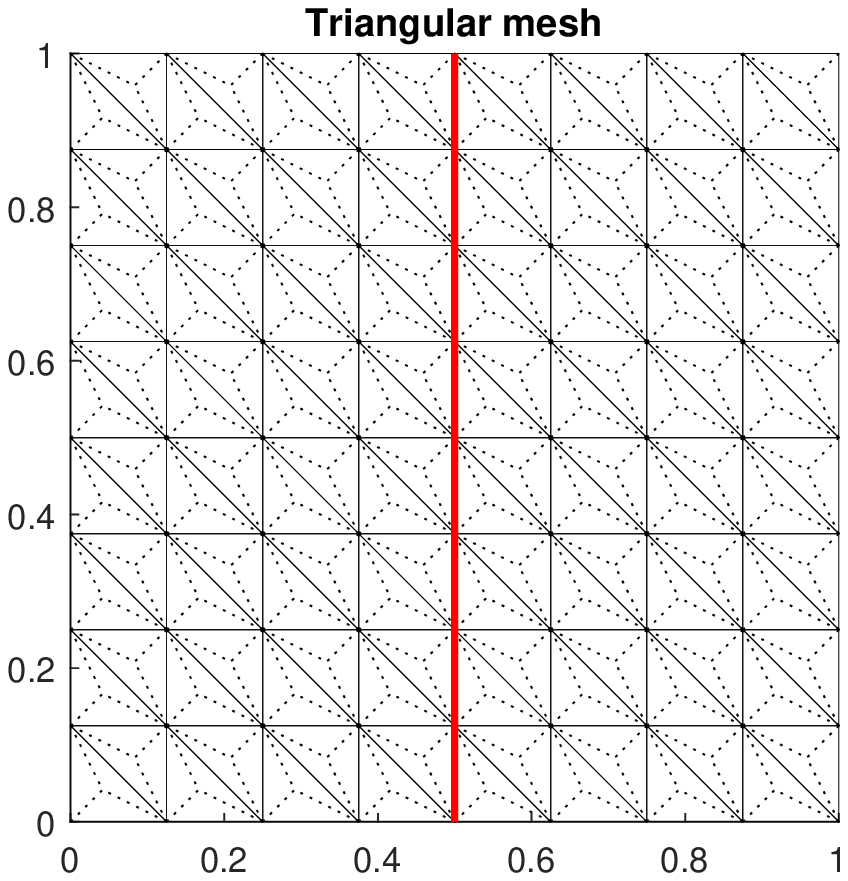}
    \includegraphics[width=0.32\textwidth]{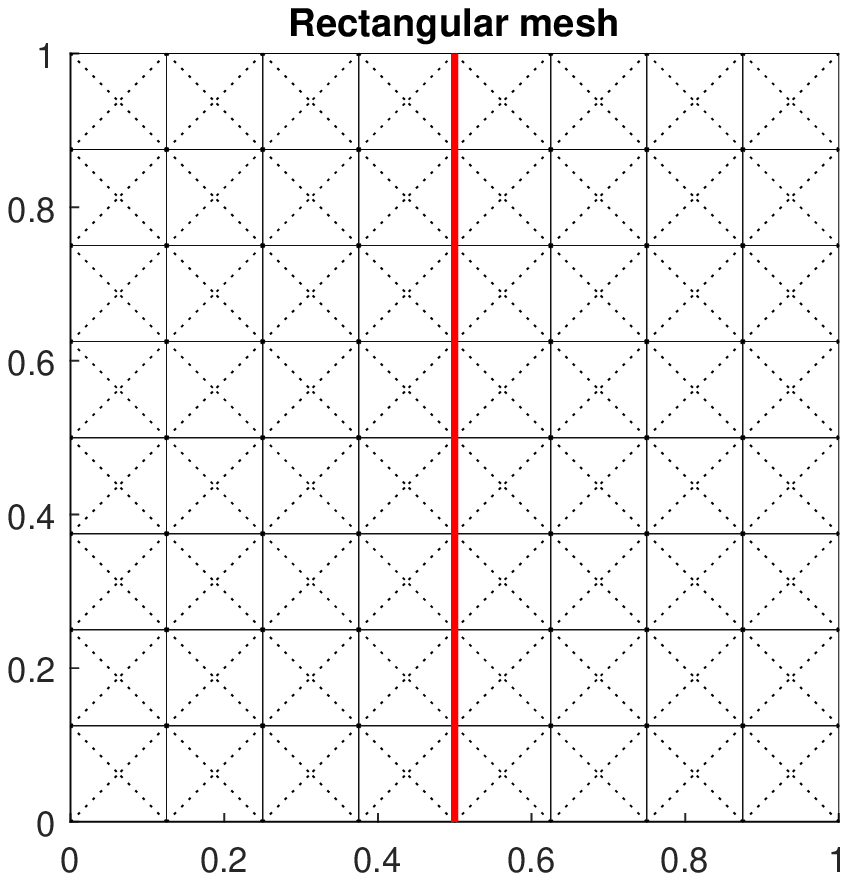}%\\
    \includegraphics[width=0.32\textwidth]{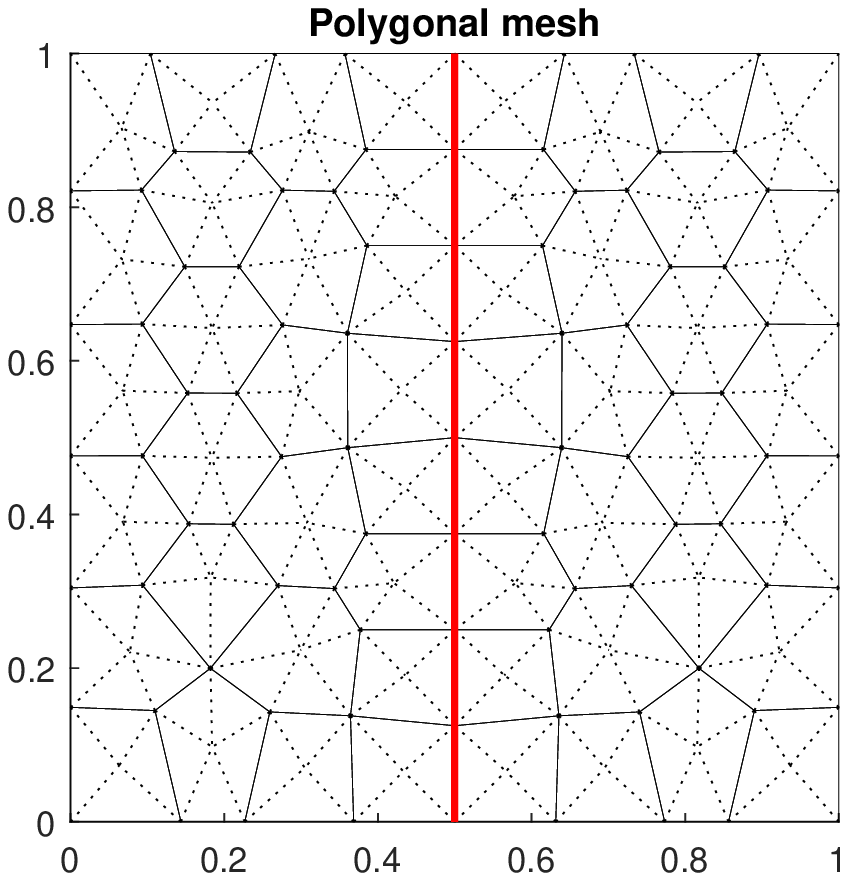}%\\
    % \includegraphics[width=0.4\textwidth]{figs/ex2_trimesh.eps}
    % \includegraphics[width=0.4\textwidth]{figs/ex2_polymesh.eps}
    % \caption{Underlying triangular mesh (left) and polygonal mesh (right) with fracture (red line) for Example 1 (top row) and Example 2 (bottom row). Here, $h=2^{-4}$.}
    \caption{Uniform triangular (left), rectangular (center), polygonal (right) meshes with comparable mesh sizes for Example~\ref{subsec:example1}. Here, dashed lines represent dual edges and red lines are the fracture $\Gamma$.}
    \label{fig:mesh}
\end{figure}
% Numerical tests are performed with $k=1,2,3$.
% Convergence history of $||p-p_h||_{0,\Omega_B}$, $||u-u_h||_{0,\Omega_B}$ and $||p_\Gamma-p_{\Gamma,h}||_{0,\Gamma}$ with respect to degrees of freedom are reported in Figure~\ref{fig:convhist}.
% The convergence rates remain optimal even when the curved elements are considered, see Figure~\ref{fig:convhist}.
% %
% Again, compared to triangular meshes, polygonal meshes lead to more accurate results.
\begin{figure}[t]
    \centering
    \includegraphics[width=0.32\textwidth]{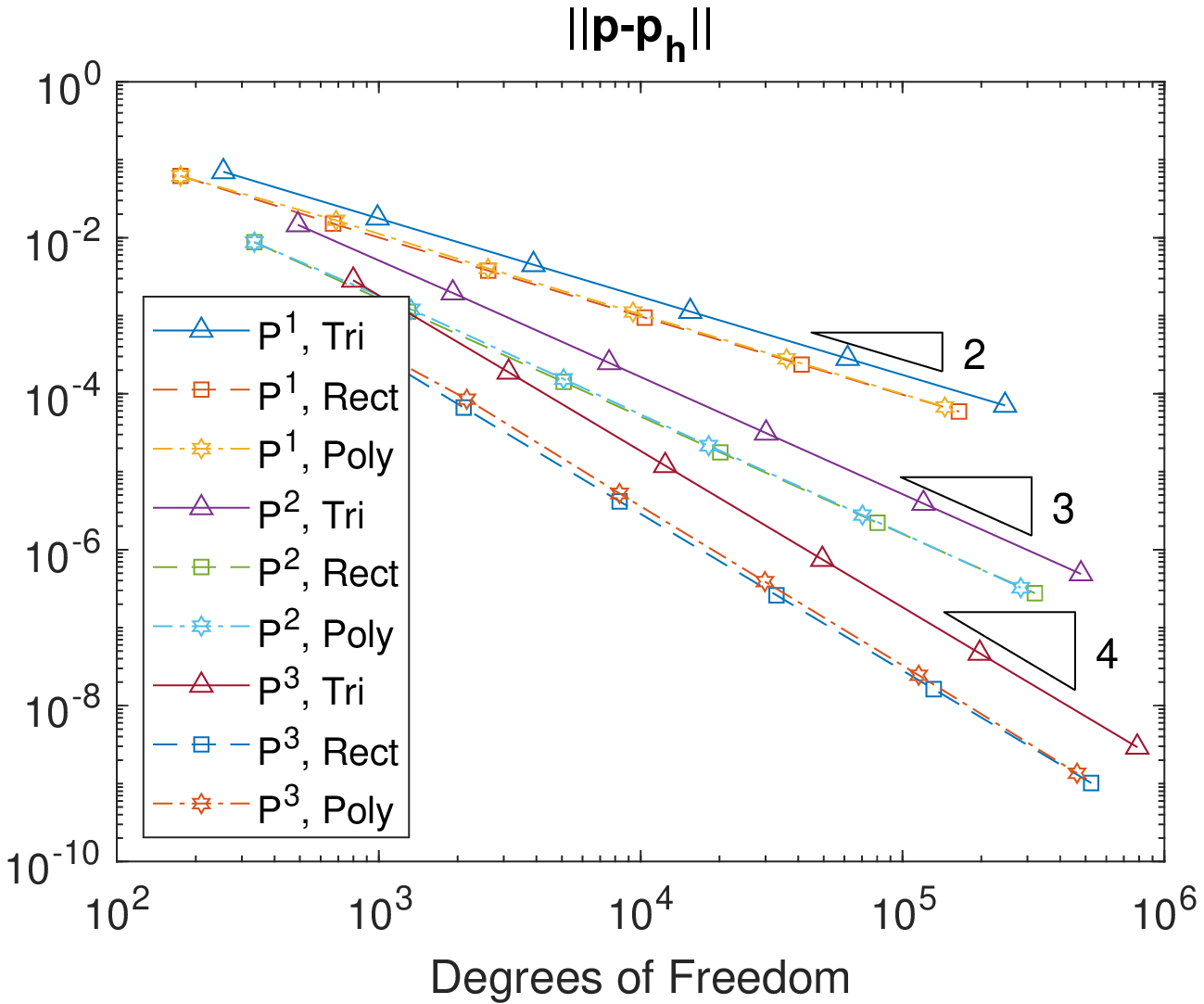}
    \includegraphics[width=0.32\textwidth]{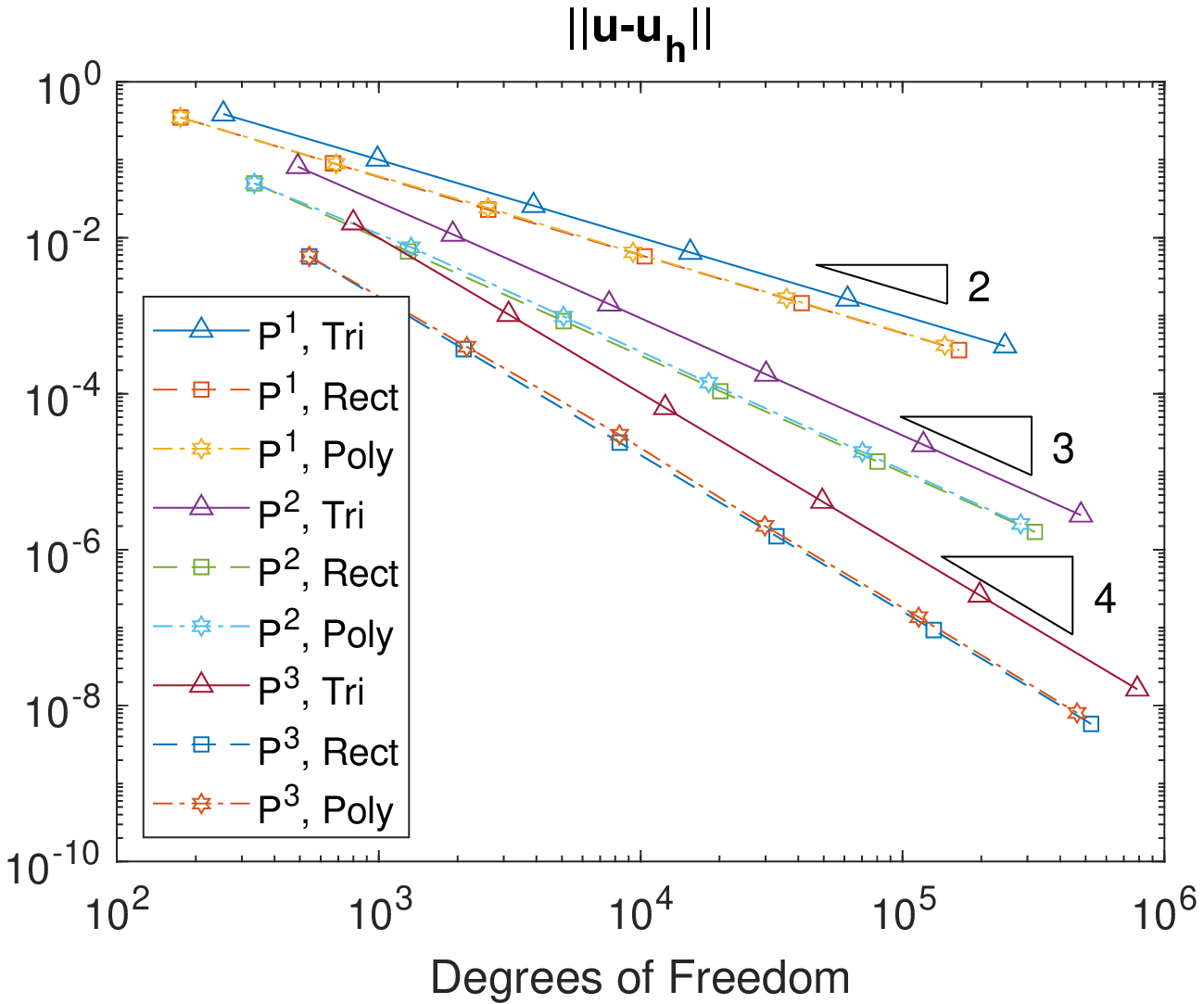}
    \includegraphics[width=0.32\textwidth]{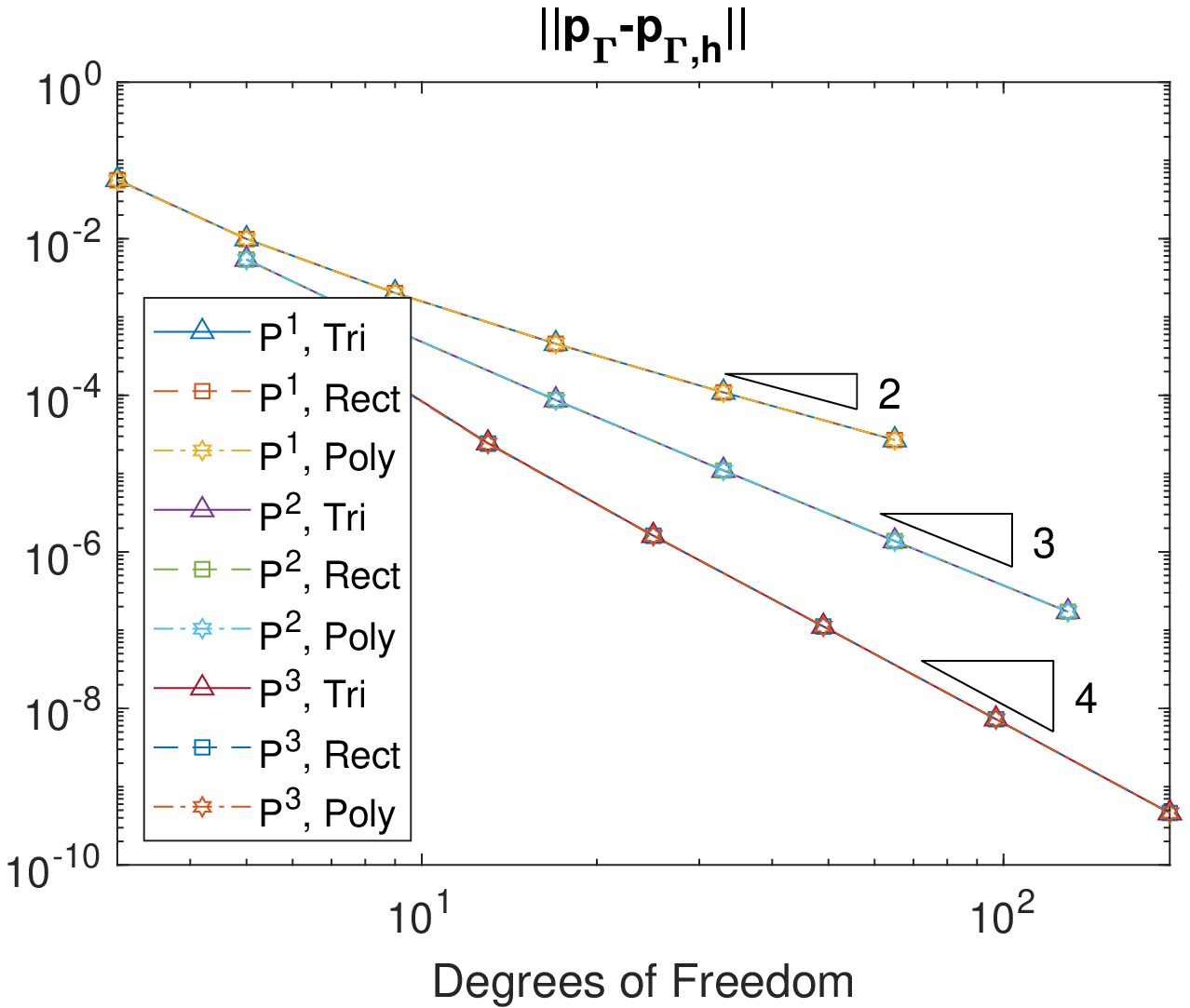}
    \includegraphics[width=0.32\textwidth]{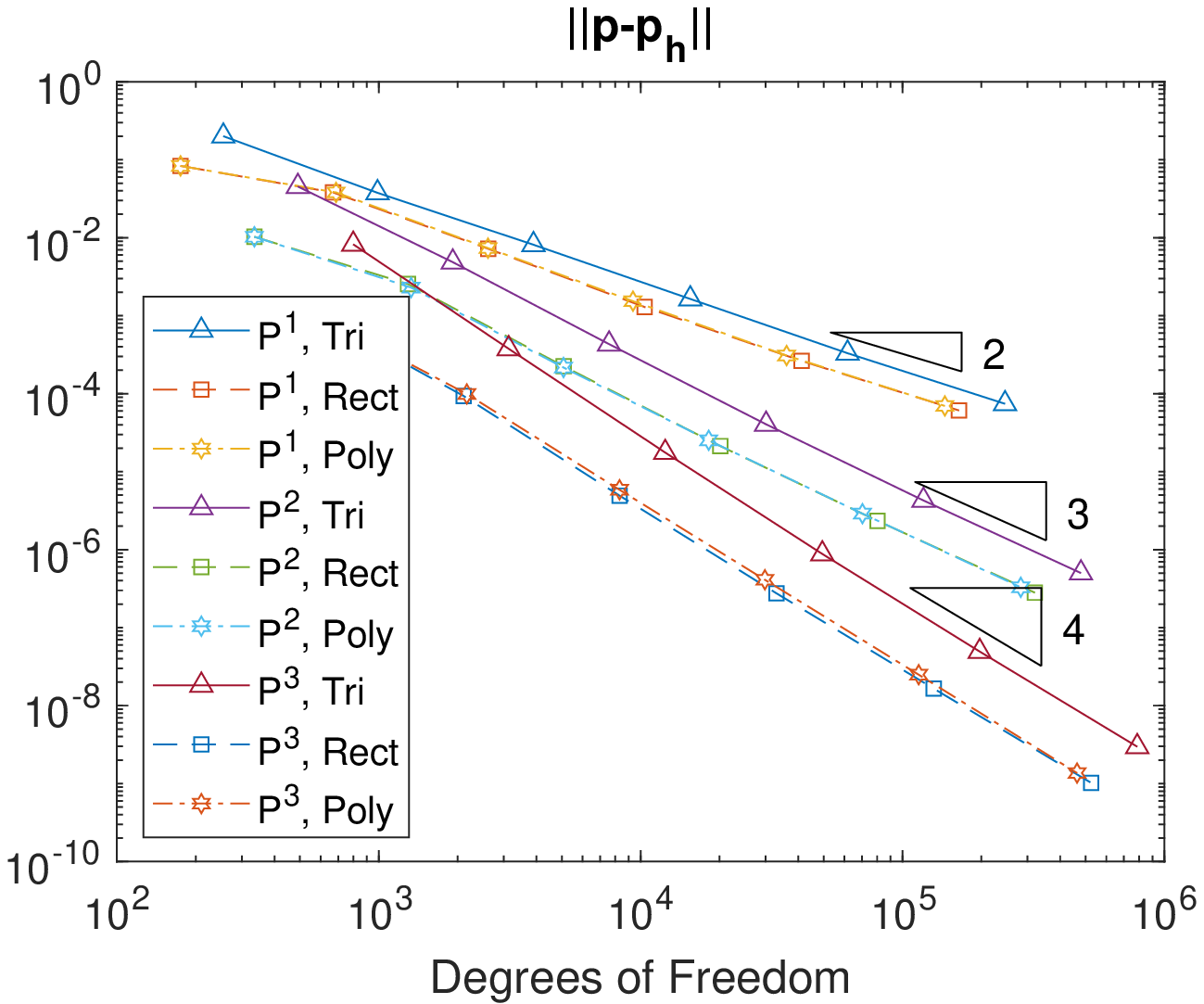}
    \includegraphics[width=0.32\textwidth]{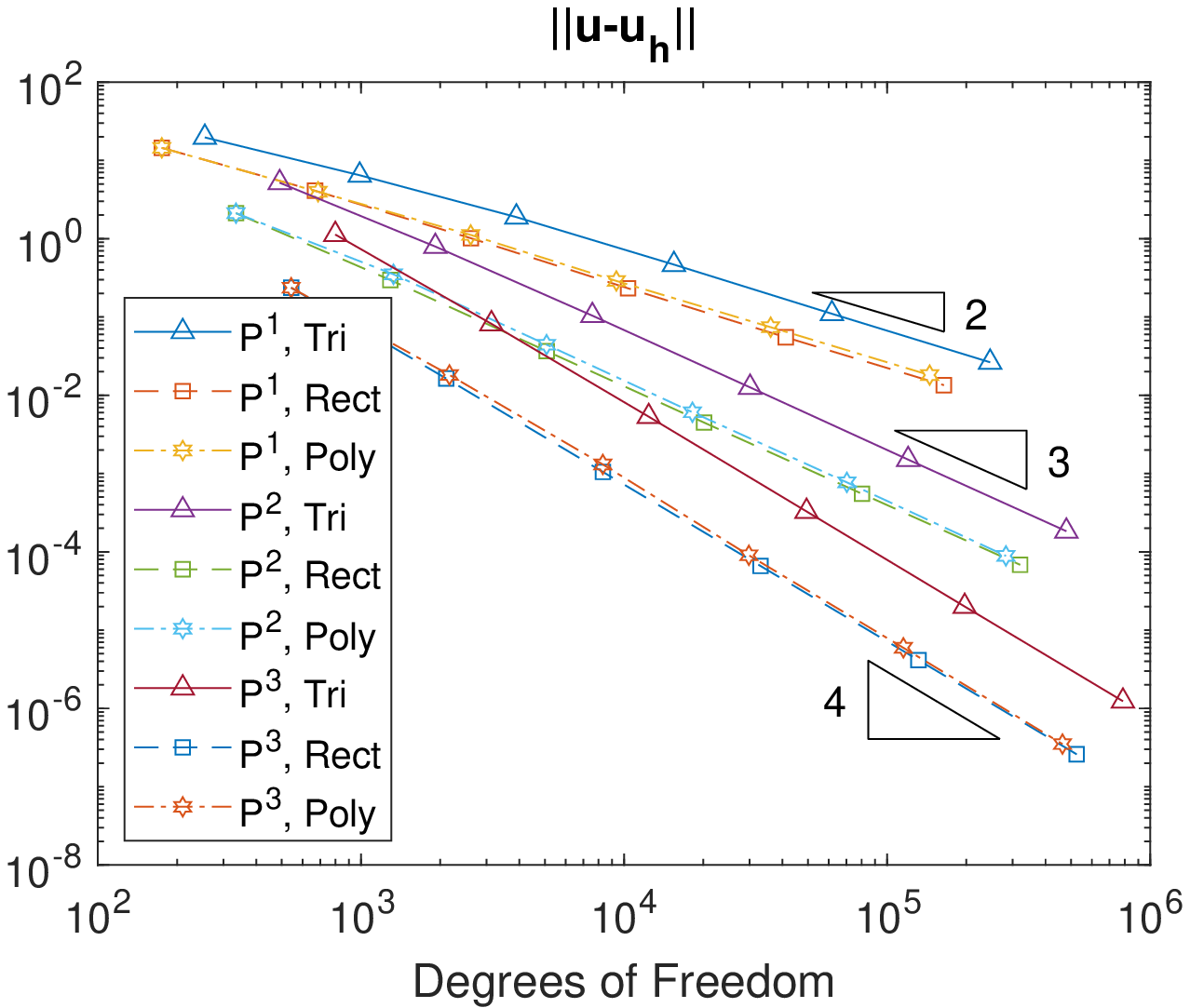}
    \includegraphics[width=0.32\textwidth]{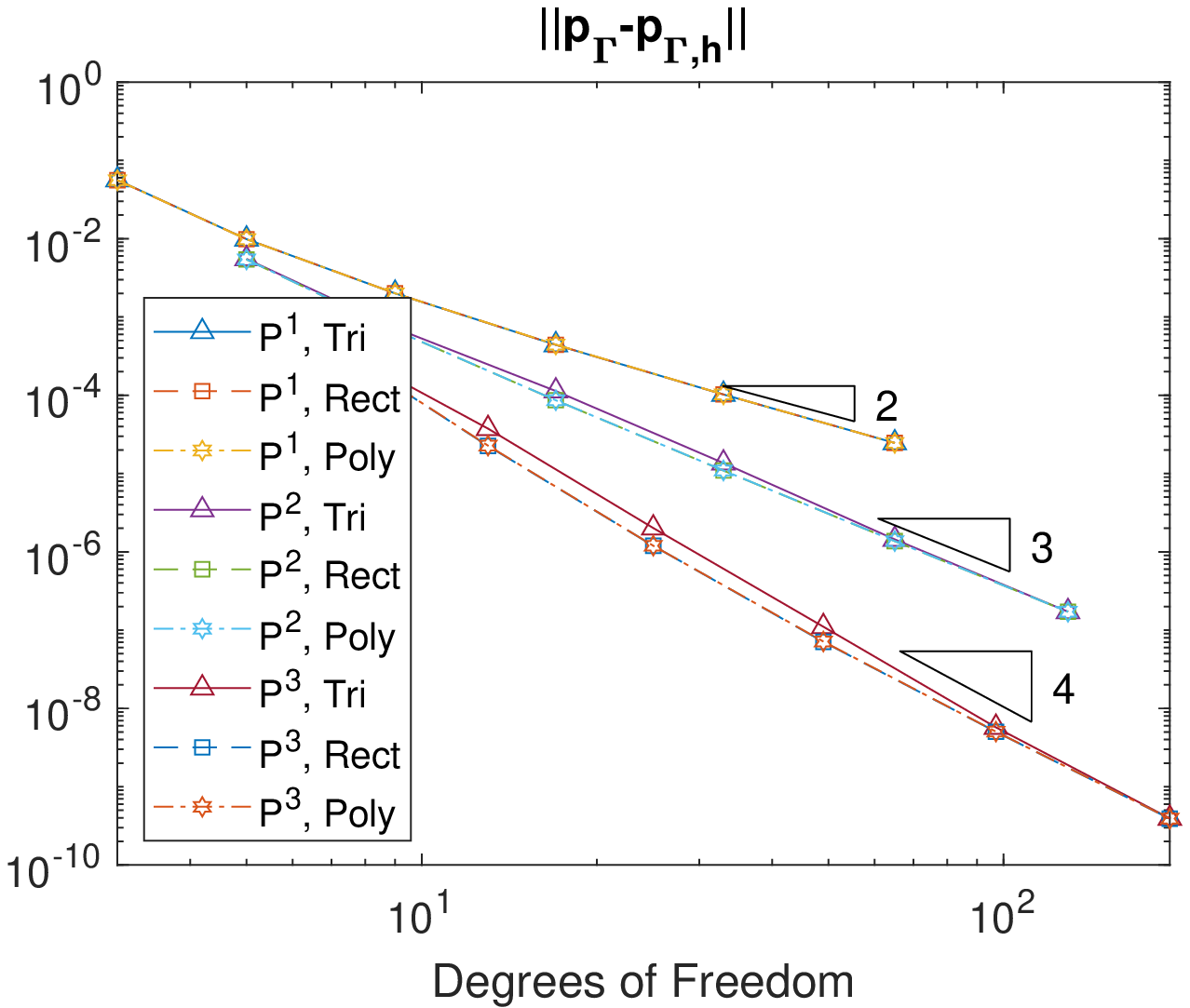}
    \caption{Convergence history for the isotropic case (top row) and the anisotropic case (bottom row) of Example~\ref{subsec:example1} with $k=1,2,3$.
        Right triangles indicate theoretical convergence rates.
        Solid lines, dotted lines, and dashed lines are error with triangular, rectangular, and polygonal meshes, respectively.}
    \label{fig:convhist}
\end{figure}
In this example, we verify the theoretical convergence order given in Corollary~\ref{cor:L2err}.
Let $\Omega=(0,1)\times(0,1)$ be the unit square in $\mathbb{R}^2$ with a fracture $\Gamma = \{x=0.5\}\times (0,1)$ at the middle.
Then, $\Omega_{B,1}=(0,0.5)\times(0,1)$ and $\Omega_{B,2}=(0.5,1)\times(0,1)$.
The solution $p$ and $p_\Gamma$ are defined by
\begin{equation*}
    p=\begin{cases}
        \sin(4x)\cos(\pi y) & \mbox{when }(x,y)\in\Omega_{B,1}, \\
        \cos(4x)\cos(\pi y) & \mbox{when }(x,y)\in\Omega_{B,2},
    \end{cases}\quad p_\Gamma = \frac{3}{4}\cos(\pi y)(\cos(2)+\sin(2)),
\end{equation*}
and the profiles for $p$ nd $p_G$ are depicted in Figure~\ref{fig:solshape}.
% Physical parameters are chosen as $\xi=3/4$, $\ell_\Gamma=0.01$, $\kappa_\Gamma^n=0.02$, and $K_\Gamma=1$, and $K=I_{2\times 2}$.
To demonstrate that our method handles anisotropic permeability, we consider two values of $\kappa_\Gamma^n$
\begin{equation*}
    \kappa_\Gamma^n=\begin{cases}0.01&\text{for isotropic case},\\1&\text{for anisotropic case}.\end{cases}
\end{equation*}
Other physical parameters are chosen as $\xi=3/4$, $\ell_\Gamma=0.01$, $K_\Gamma=1$ and
\begin{equation*}
    K = \left(
          \begin{array}{cc}
            \kappa_\Gamma^n/(2\ell_\Gamma) & 0\\
            0 & 1 \\
          \end{array}
        \right).
\end{equation*}
The boundary conditions and source terms can be derived from the definition.
Numerical tests are performed with three mesh types: Uniform triangular; uniform rectangular; quasi-uniform polygonal meshes, see Figure~\ref{fig:mesh}.
Quasi-uniform polygonal meshes are centroidal Voronoi tessellation (CVT) generated by the Lloyd algorithm with target mesh size $h$ (cf. \cite{Talischi12}).
We pre-allocate fixed generators for Voronoi cells near the fracture so that the resulting mesh aligns with the fracture $\Gamma$.
% Figure~\ref{fig:mesh} shows an example of mesh grid employed in this set of experiments.
In Figure~\ref{fig:convhist}, we report the convergence history for $\|p-p_h\|_{0,\Omega_B}, \|\bm{u}-\bm{u}_h\|_{0,\Omega_B}$ and $\|p_\Gamma-p_{\Gamma,h}\|_{0,\Gamma}$ against the number of degrees of freedom for polynomial order $k=1,2,3$ for both isotropic and anisotropic cases. We can observe optimal convergence rates $\mathcal{O}(h^{k+1})$ regardless of the choice of $K$, which confirms the theoretical findings.
In addition, the polygonal meshes and rectangular meshes outperform triangular meshes in terms of accuracy.
This and the geometrical flexibility of polygonal meshes reveal that polygonal meshes are better suited to the simulation of physical problems under consideration.

\subsection{Robustness to small edges}\label{subsec:example2}
\begin{figure}[t]
    \centering
    \includegraphics[width=0.32\textwidth]{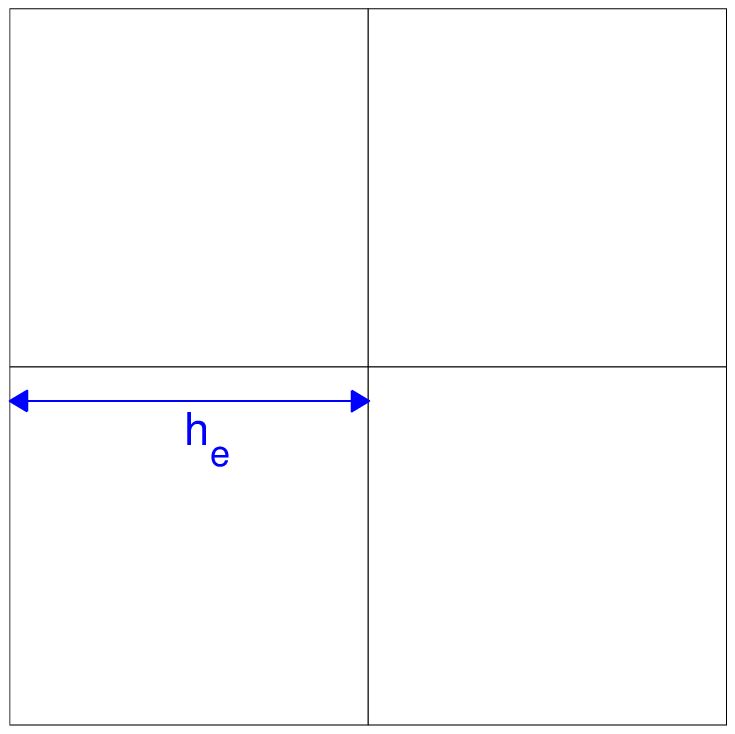}
    \includegraphics[width=0.32\textwidth]{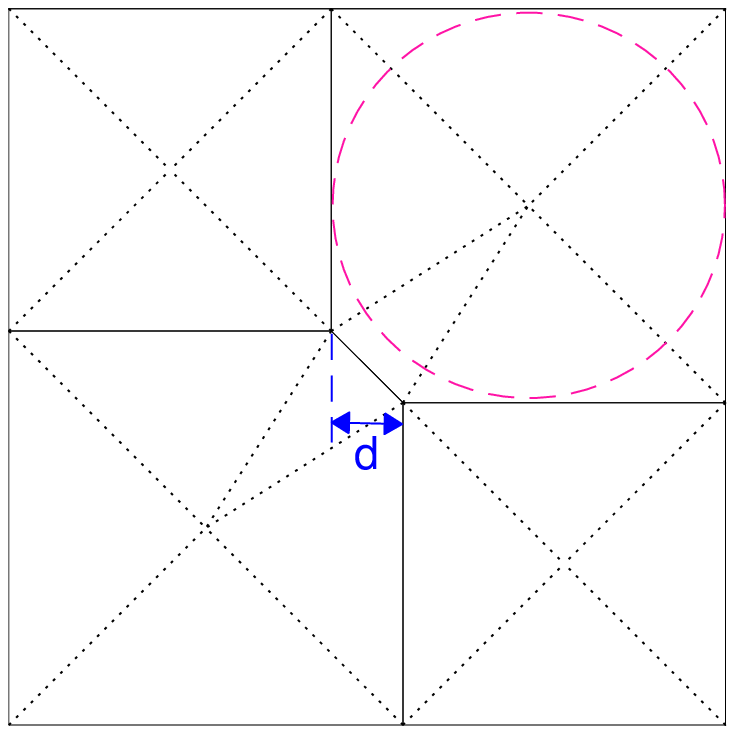}
    \includegraphics[width=0.32\textwidth]{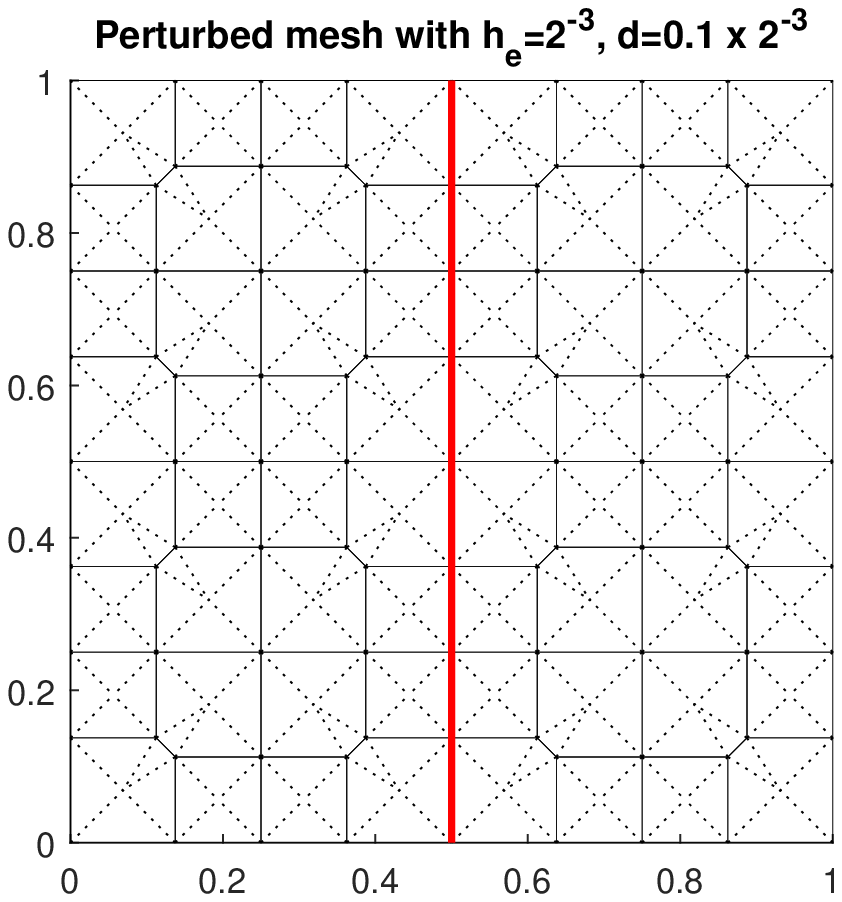}
    \caption{Schematic of perturbation. $2\times2$ squares (left), two rectangles and two pentagons after perturbation with $d=0.1\times h_e$ (center), and a resulting mesh from a uniform rectangular mesh with $h_e=2^{-3}$ and $d=0.1\times h_e$.
    The dashed circle is the ball, described in Assumption (A), of an pentagon.}
    \label{fig:mesh_smallEdge}
\end{figure}

\begin{figure}[t]
    \centering
    \includegraphics[width=0.32\textwidth]{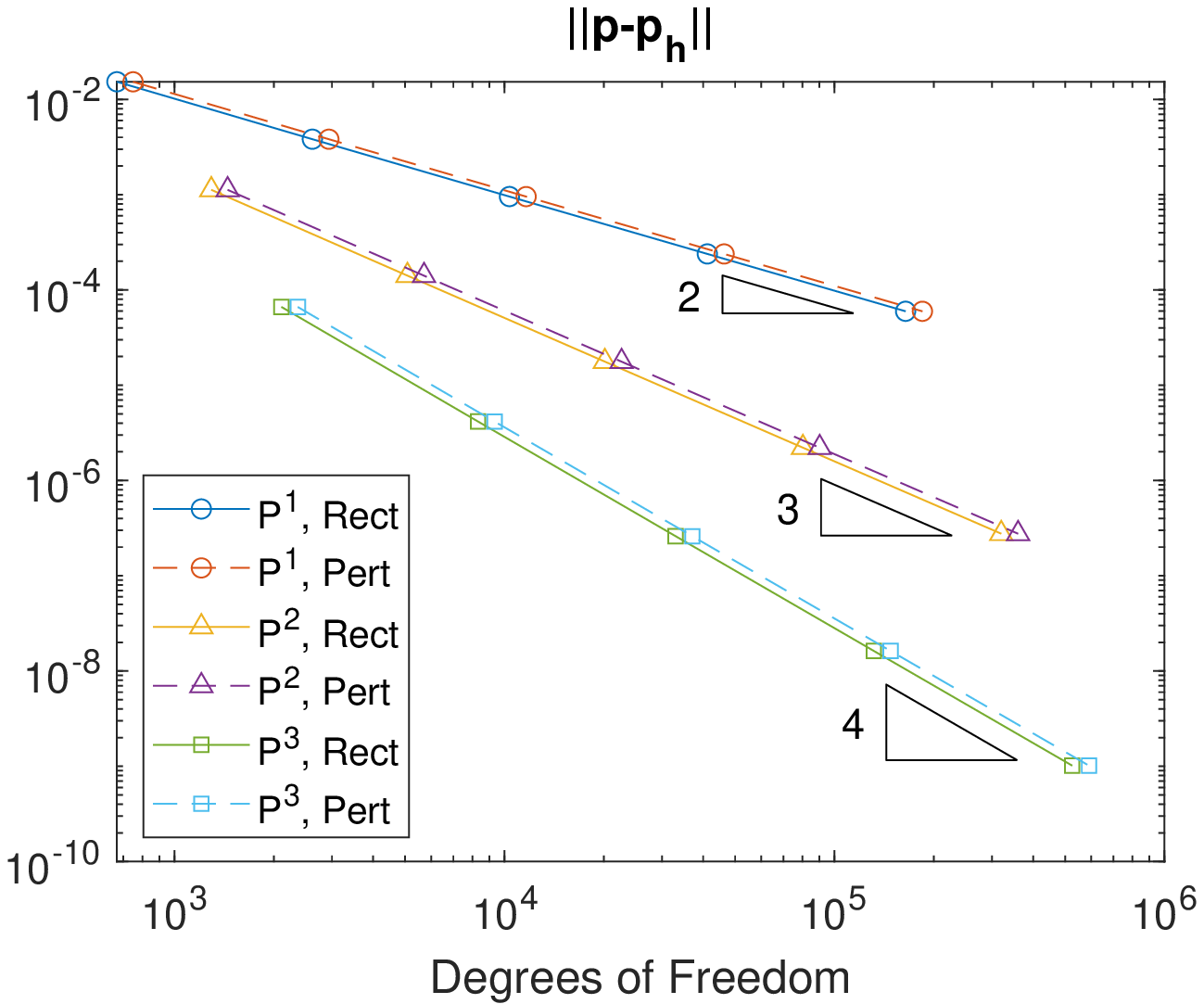}
    \includegraphics[width=0.32\textwidth]{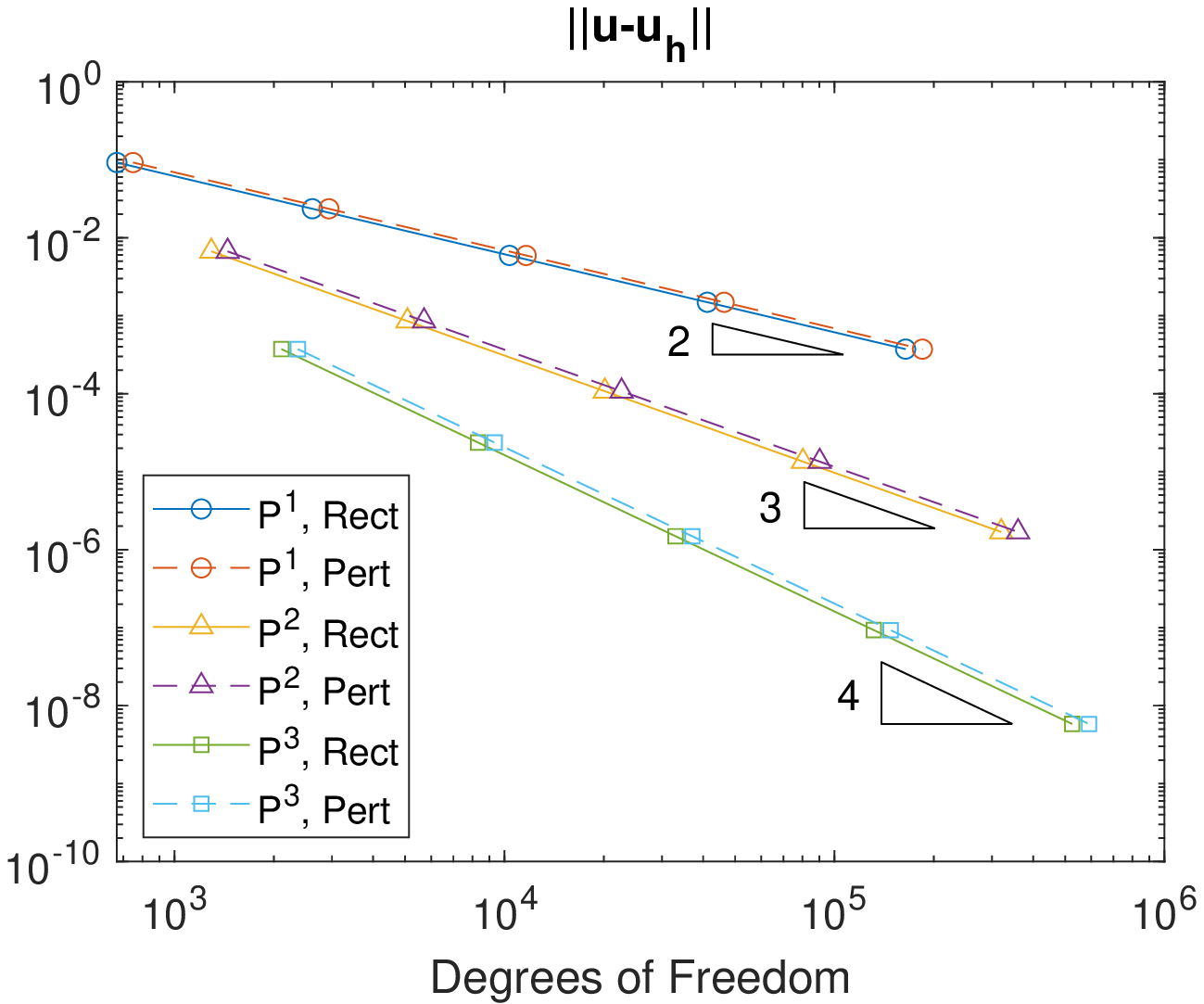}
    \includegraphics[width=0.32\textwidth]{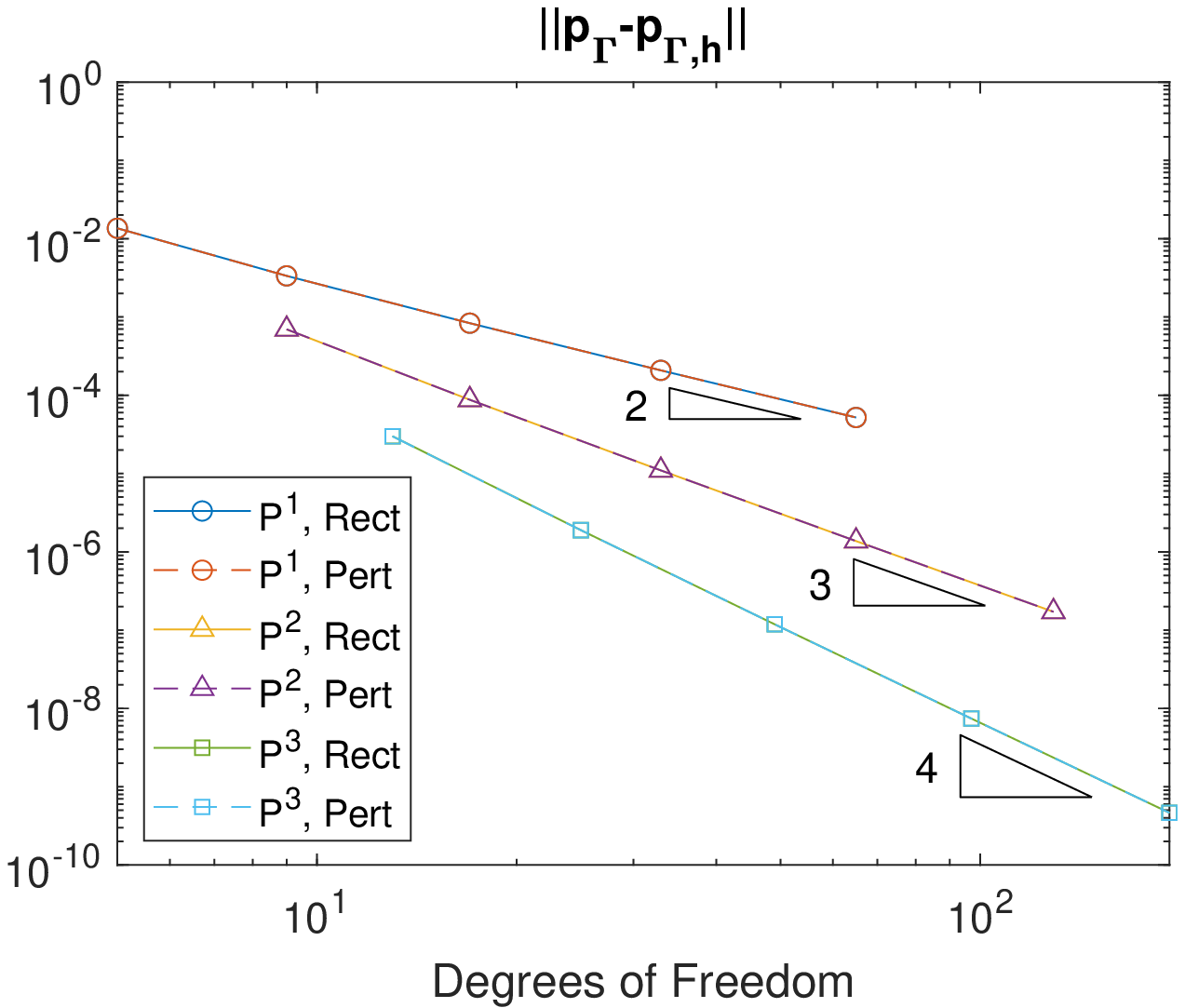}
    \caption{Convergence history with uniform rectangular meshes (solid lines) and perturbed meshes with $d=0.001\times h_e$ (dashed lines)}
    \label{fig:convhist_smallEdge}
\end{figure}
In Corollary~\ref{cor:L2err}, \textit{a priori} error estimates for $||u-u_h||_{0,\Omega_B}$, $||p-p_h||_{0,\Omega_B}$ and $||p_\Gamma-p_{\Gamma,h}||_{0,\Gamma}$ are derived without Assumption (B).
Therefore, the accuracy of all three variables in $L^2$ error should be independent of the existence of small edges.
To demonstrate this, we design a mesh by perturbing a uniform rectangular mesh.
For each $2\times2$ squares, we replace the common vertex of four squares by a small edge with length $\sqrt{2}d$ so that we obtain two squares and two pentagons, see Figure~\ref{fig:mesh_smallEdge}.
% More precisely, we start from $2\times 2$ rectangular elements with side length $h_e$.
% Then, we put a small edge with length $\sqrt{2}d$ at the center of the rectangular elements parallel to a diagonal of rectangles.
% The two rectangles in the direction of the new edge shrink to rectangles with side length $h_e-d/2$.
% The remaining two rectangles are replaced by two pentagons which consist of four original edges and the new edge.
% By perturbing the midpoint in both directions of a diagonal of a rectangle by $d$, we make two pentagons and rectangles from four rectangles.\Blue{If I do not look at the figure, I do not understand this description, use different words and more details?}
% This procedure is illustrated in Figure~\ref{fig:mesh_smallEdge} with relatively large $d$ for demonstrative purposes.
By taking $d$ small enough, the differences between the mesh sizes of uniform rectangular mesh and its perturbed mesh are negligible.
While the perturbation preserves $\rho_S$ in Assumption (A), $\rho_E$ in Assumption (B) is changed from $1/\sqrt{2}$ to $d/h_e$ for the perturbed mesh.

For numerical tests, we consider the same solution used in Example~\ref{subsec:example1}.
The convergence history against the number of degrees of freedom are reported in Figure~\ref{fig:convhist_smallEdge} for polynomial order $k=1,2,3$.
Here, we use the uniform rectangular mesh and perturbed mesh with $d=0.001\times h_e$.
We can observe that the difference in accuracy between the numerical solutions with uniform rectangular meshes and perturbed meshes for all the three variables are negligible.
Also, noting that the perturbation introduces additional degrees of freedom due to the small edges, the difference can be attributed to the increased degrees of freedom.
Therefore, we conclude that our method is free from Assumption (B).
% This suggests that, while we used Assumption (B) to derive \textit{a priori} error estimates for $||u-u_h||_{0,\Omega_B}$, our method is actually free from Assumption (B).

\subsection{Anisotropic meshes}\label{subsec:example3}
\begin{figure}[t]
    \centering
    \includegraphics[width=0.32\textwidth]{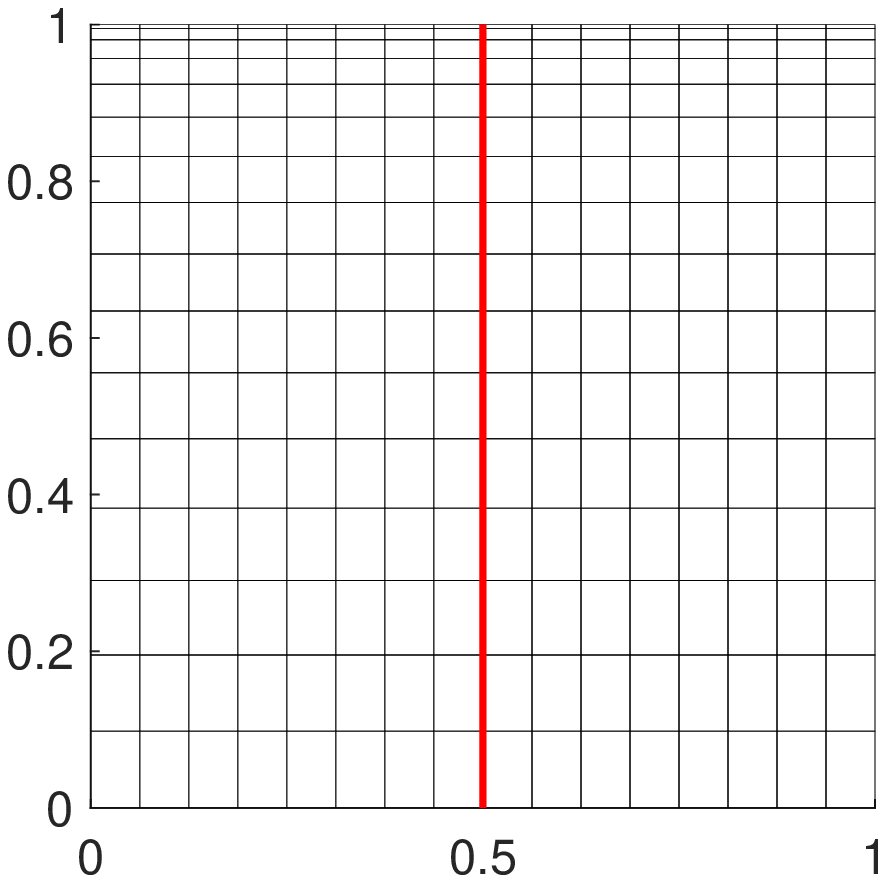}
    \includegraphics[width=0.32\textwidth]{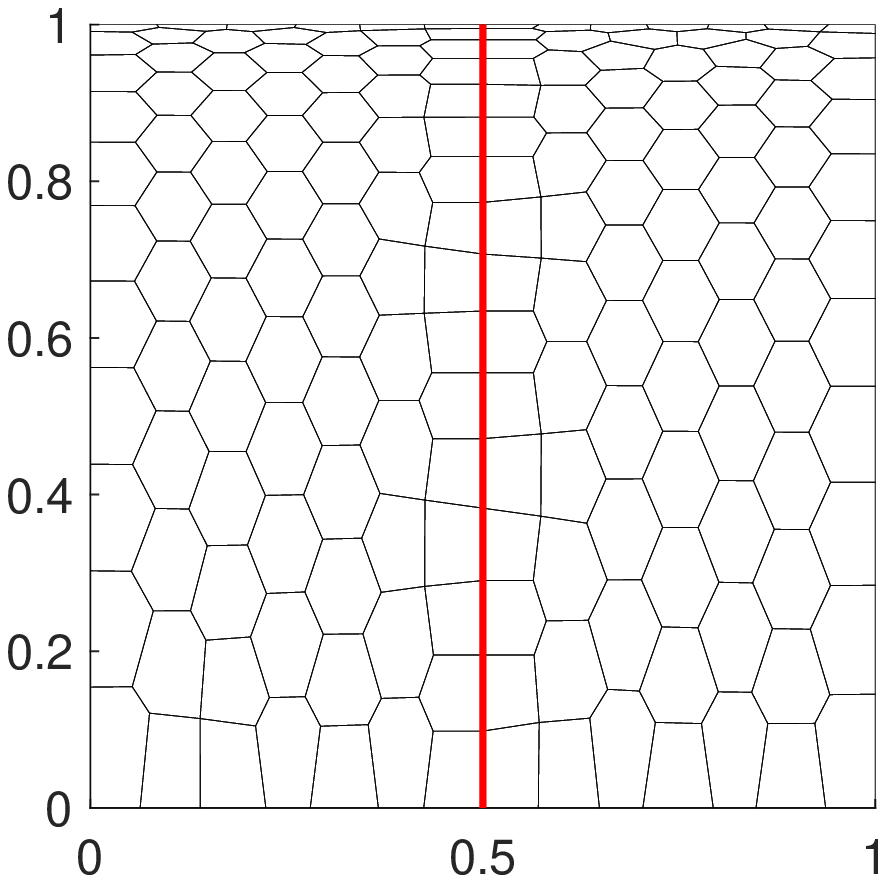}
    \caption{Mapped rectangular (left) and polygonal (right) meshes from (quasi-)uniform meshes used in Example~\ref{subsec:example3}. Polygons near $y=1$ are highly anisotropic so that the Assumption~(A) is not satisfied.}
\end{figure}
\begin{figure}[t]
    \centering
    \includegraphics[width=0.32\textwidth]{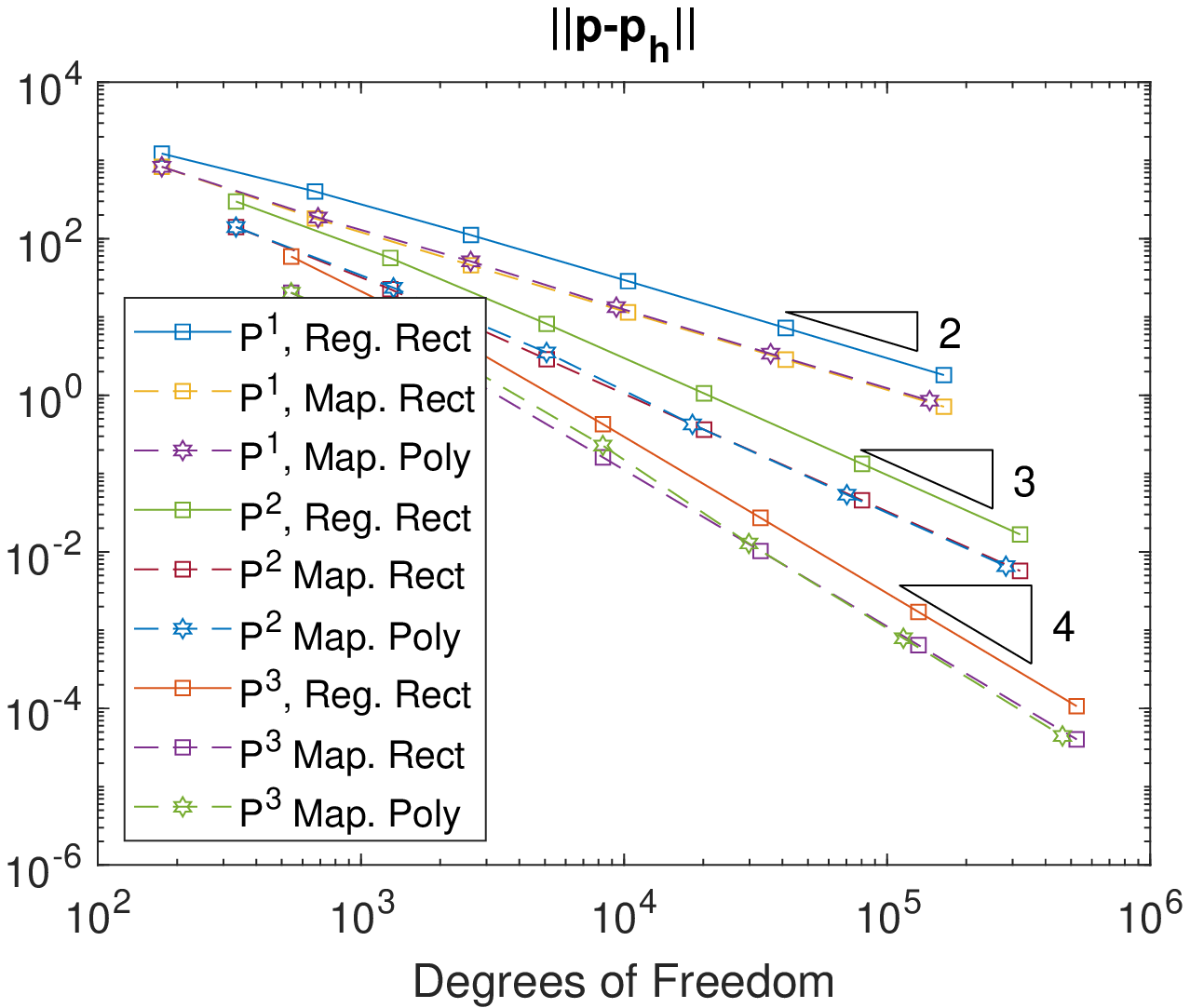}
    \includegraphics[width=0.32\textwidth]{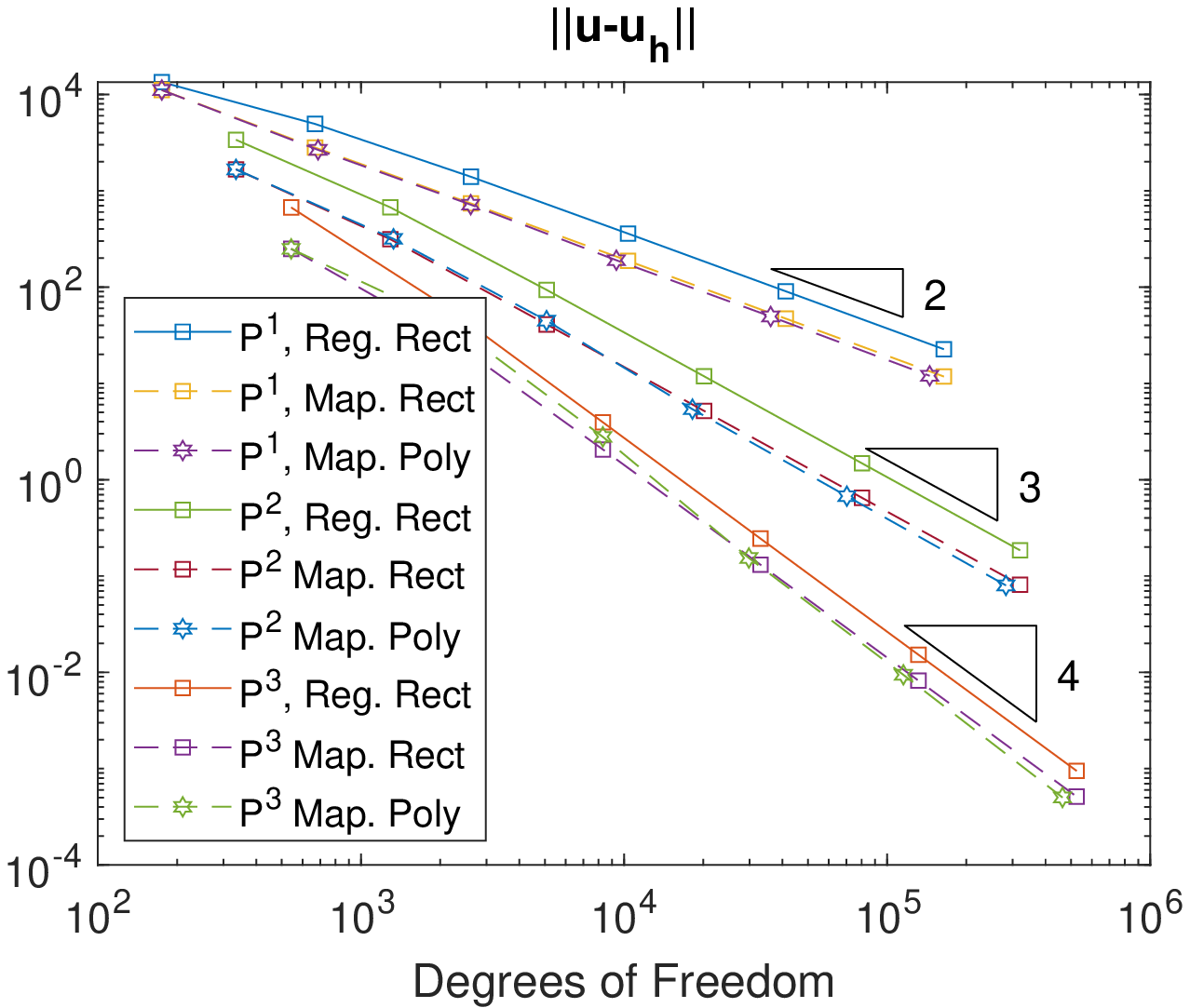}
    \includegraphics[width=0.32\textwidth]{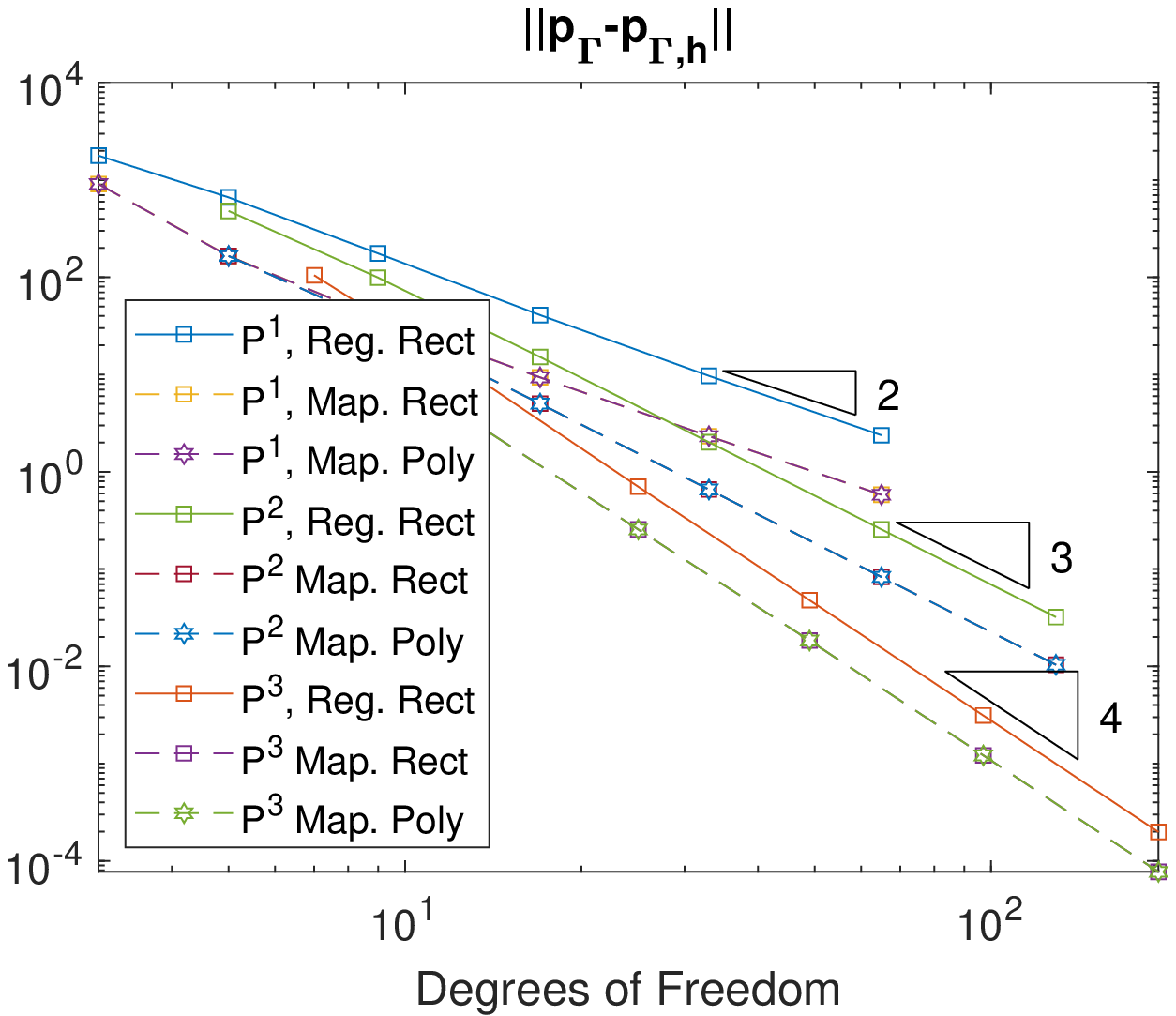}
    \caption{Convergence history with uniform rectangular meshes (solid lines) and anisotropic meshes (dashed lines) for $k=1,2,3$. Reg. and Map. in legends are abbreviation of regular and mapped meshes, respectively.}
    \label{fig:convhist_anisotropic}
\end{figure}
% \begin{figure}[t]
%     \centering
%     \includegraphics[width=0.34\textwidth]{figs/err_unif.eps}
%     \includegraphics[width=0.34\textwidth]{figs/err_anis.eps}
%     \includegraphics[width=0.30\textwidth]{figs/mesh_anis.eps}
%     \caption{Error distribution of $p$ with a uniform rectangular mesh (left) and an anisotropic mesh (center) with its underlying anisotropic mesh (right).
%     The data is obtained with $h_e=2^{-6}$ and $k=3$.}
%     \label{fig:errdist}
% \end{figure}
In this example, we investigate reliability of the proposed method when it is used with anisotropic meshes.
Consider the solution $p$ and $p_\Gamma$ defined by
\begin{equation*}
    p=\begin{cases}
        \exp(10y)\sin(4x)\sin(\pi y) & \mbox{when }(x,y)\in\Omega_{B,1}, \\
        \exp(10y)\cos(4x)\sin(\pi y) & \mbox{when }(x,y)\in\Omega_{B,2},
    \end{cases}\quad p_\Gamma = \frac{3}{4}\exp(10y)\sin(\pi y)(\cos(2)+\sin(2)).
\end{equation*}
The domain $\Omega$, fracture $\Gamma$, and other physical constants are chosen as for the isotropic case of Example~\ref{subsec:example1}.
Notice that the solutions $p$ and $p_\Gamma$ exhibit boundary layer on $y=1$, see Figure~\ref{fig:solshape}.

Consider a mapping $A:(0,1)\times(0,1)\rightarrow(0,1)\times(0,1)$ by $(x,y)\mapsto (x,\sin(\pi y/2))$.
This maps a regular mesh on $(0,1)\times(0,1)$ to a highly anisotropic mesh near $y=1$.
The resulting mapped mesh violates Assumption (A) since the radius of the ball converges to 0 near $y=1$ as $h$ converges to 0.
Again, we report the convergence history against the number of degrees of freedom on mapped rectangular and polygonal meshes for $k=1,2,3$, see Figure~\ref{fig:convhist_anisotropic}.
For a reference, we also include numerical results with uniform rectangular meshes.
We can observe optimal convergence for all the three variables even when highly anisotropic meshes are employed.
Also, due to densely distributed mesh near the boundary layer, the anisotropic meshes give more accurate results than uniform meshes.
Moreover, the loss of accuracy can be also attributed to the non-uniform distribution of elements.

\subsection{Unfitted general meshes}\label{subsec:example4}

\begin{figure}[t]
    \centering
    \includegraphics[width=0.32\textwidth]{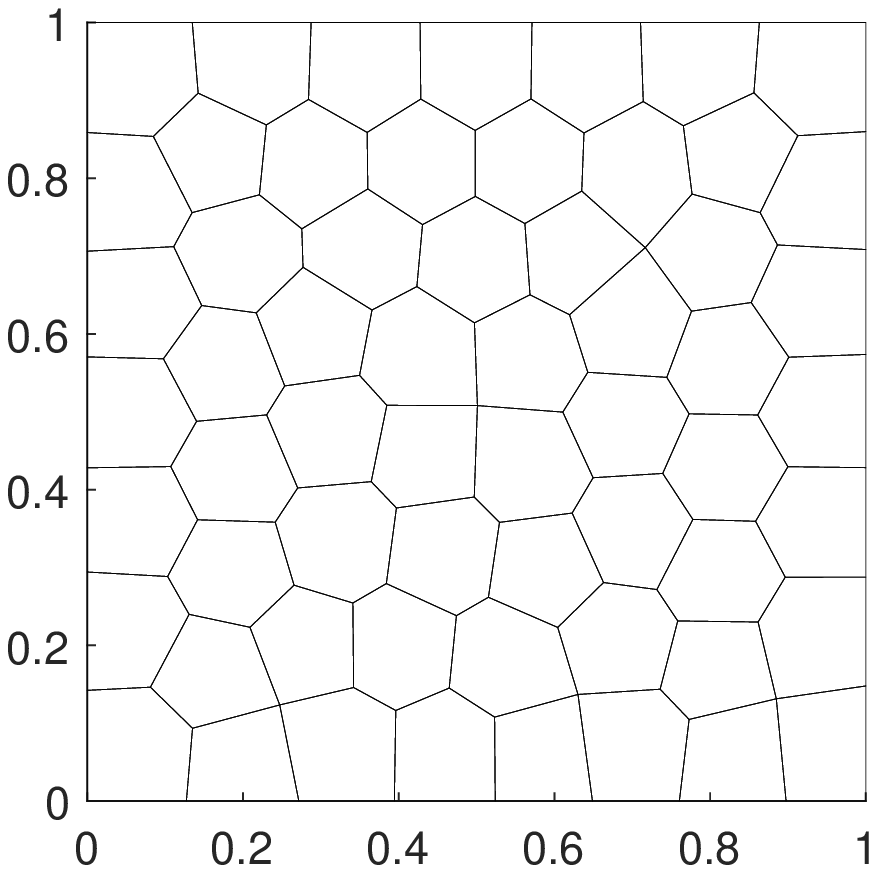}
    \includegraphics[width=0.32\textwidth]{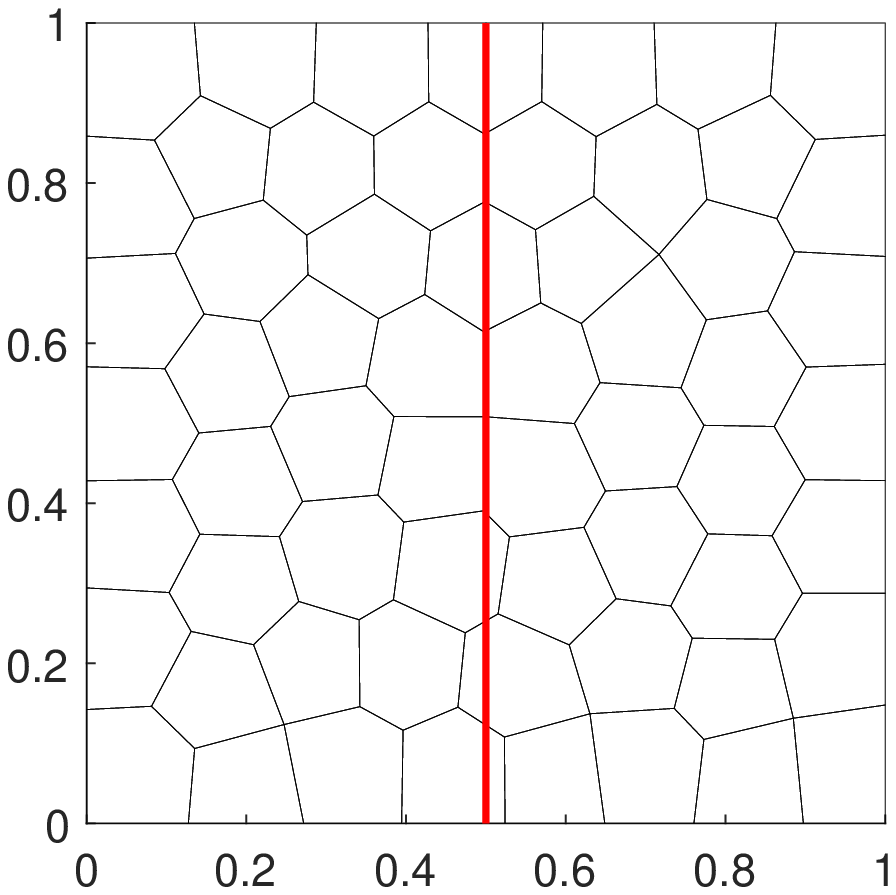}
    \includegraphics[width=0.32\textwidth]{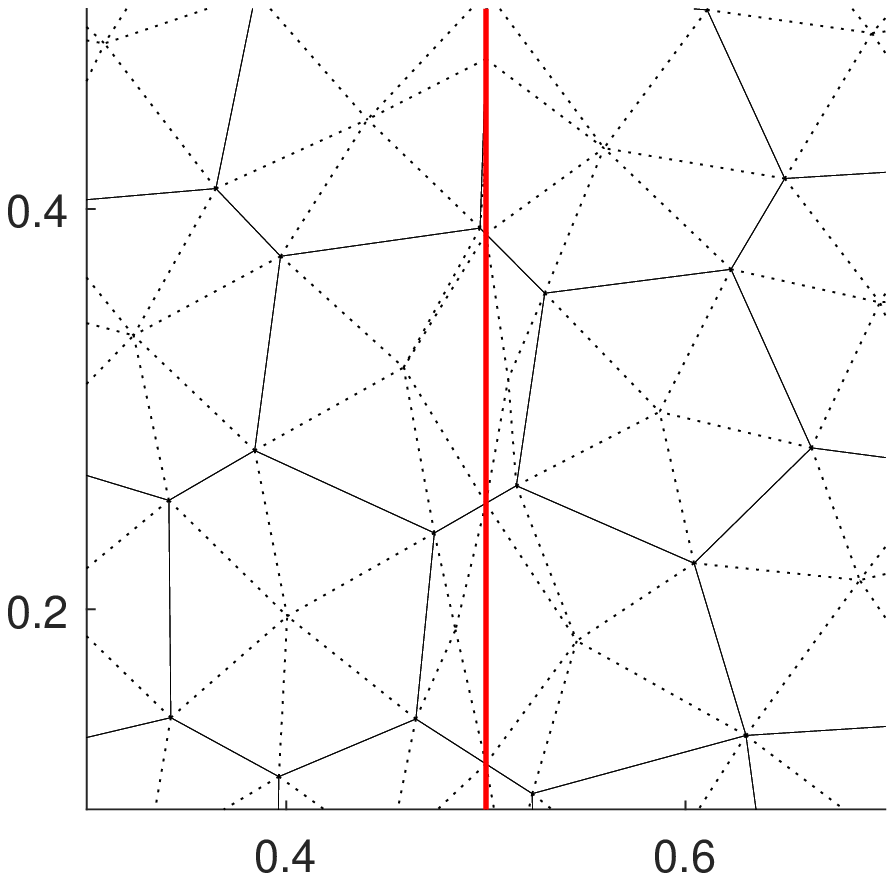}
    \caption{Underlying polygonal mesh ($\mathcal{T}_u$, left), modified mesh ($\tilde{\mathcal{T}}_u$) (center) and its magnified view with dual edges (right). The modified mesh contains both sliver elements and small edges.}
    \label{fig:mesh_unfitted}
\end{figure}

\begin{figure}[t]
    \centering
    \includegraphics[width=0.32\textwidth]{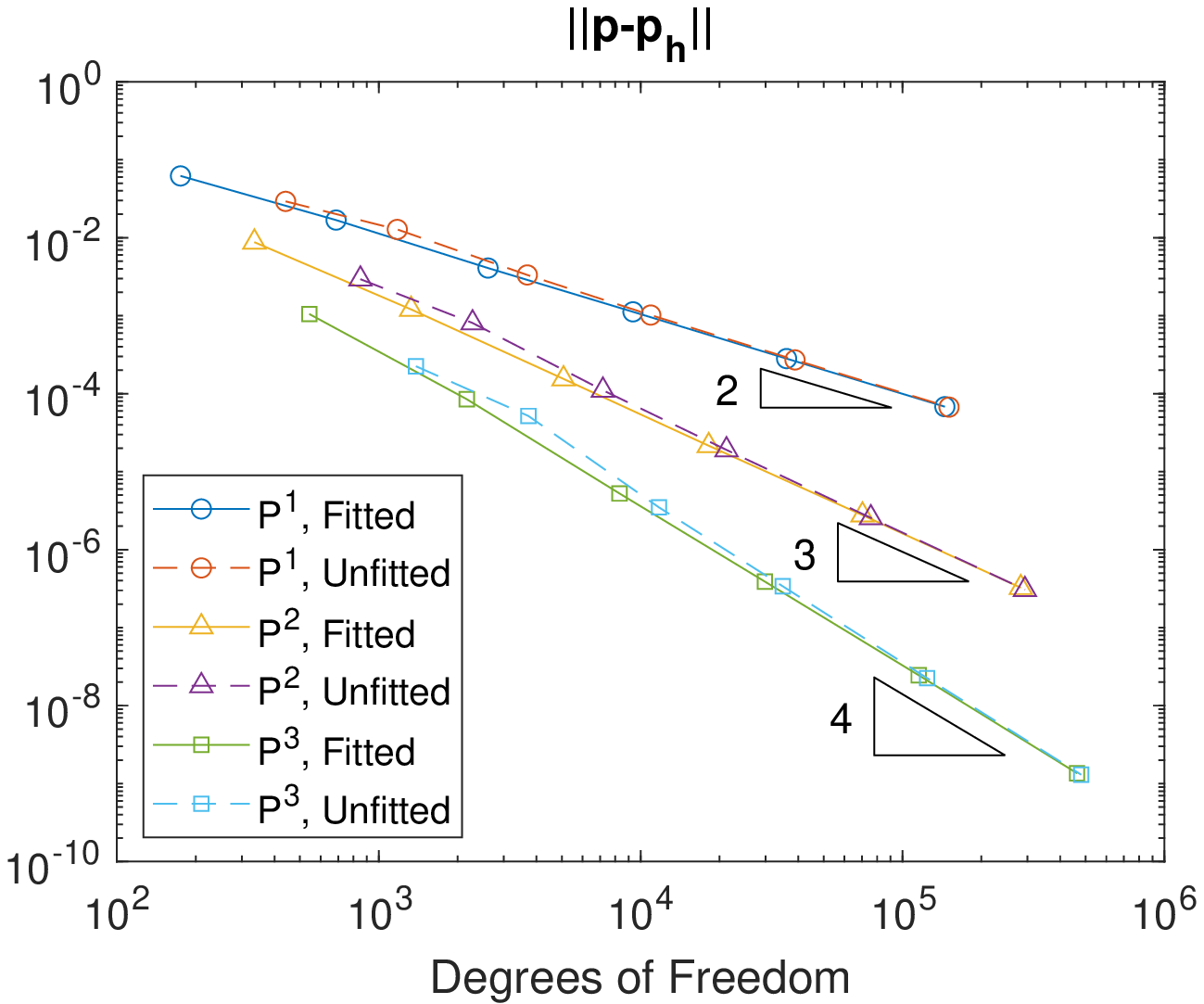}
    \includegraphics[width=0.32\textwidth]{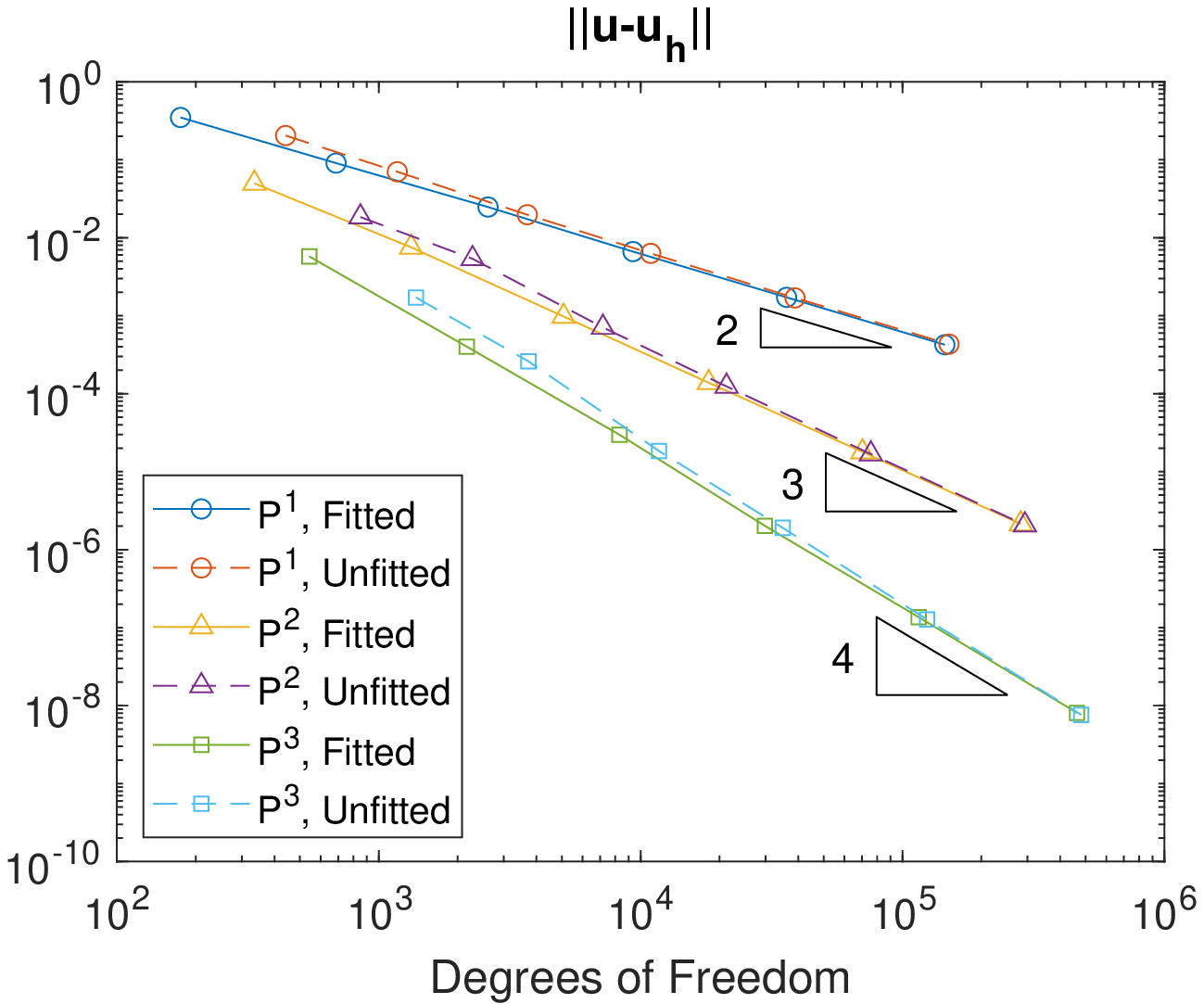}
    \includegraphics[width=0.32\textwidth]{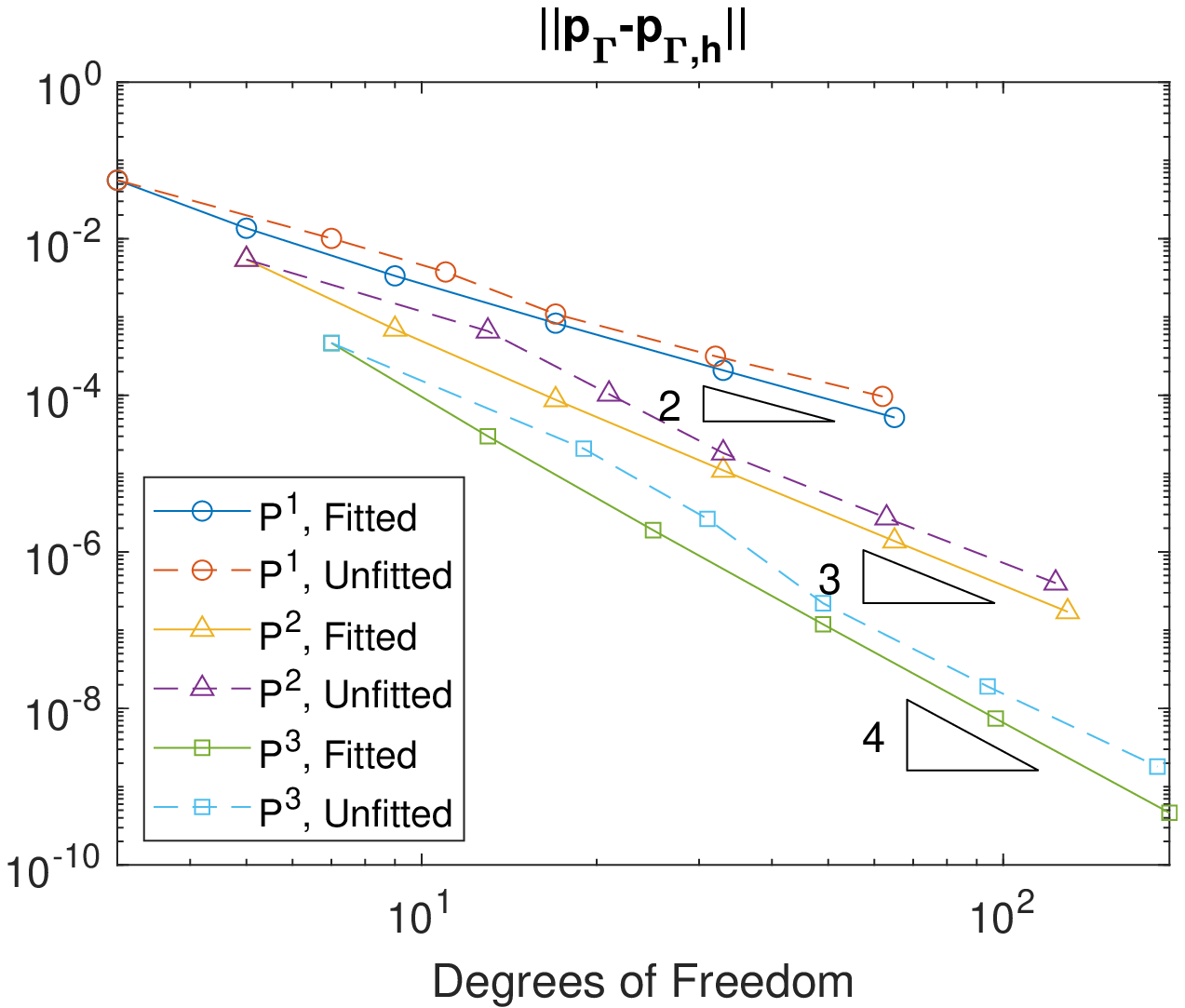}
    \caption{Convergence history with fitted (solid lines) and unfitted (dashed lines).}
    \label{fig:convhist_unfitted}
\end{figure}
In Example~\ref{subsec:example1}, we generate fitted polygonal meshes with pre-fixed generators near $\Gamma$.
However, this method is not applicable in practice because of complex, or even unknown \text{a priori}, geometry of the fracture.
Therefore, numerical methods utilizing unfitted meshes are preferred.
Example~\ref{subsec:example2} and Example~\ref{subsec:example3} suggest that the proposed method is reliable and accurate even when polygonal meshes with small edges or sliver elements are employed.
This allow us to consider unfitted meshes for our method without additional treatment such as mesh aggregation \cite{badia2018} or removal of small edges.

Let $\mathcal{T}_u$ be a polygonal mesh on $\Omega$ which is generated independent of $\Gamma$.
For each polygon $T\in \mathcal{T}_u$ with $T\cap \Gamma\neq \emptyset$, we split $T$ into $\{T_i\}$ so that $T_i\subset \Omega_{B,i}$ for each $i$.
% For each interface element $S(\nu)\in \mathcal{T}_u$ such that $S(\nu)\cap \Gamma\neq \emptyset$, we split $S(\nu)$ into $\{S_i(\nu)\}$ so that $S_i(\nu)\subset \Omega_{B,i}$ for each $i$.
This induces a new mesh $\tilde{\mathcal{T}}_u$ where there is one and only one $\Omega_{B,i}$ for each $T\in\tilde{\mathcal{T}}_u$ such that $T\subset \Omega_{B,i}$.
Figure~\ref{fig:mesh_unfitted} shows an example of background mesh $\mathcal{T}_u$ and updated mesh $\tilde{\mathcal{T}}_u$ with its induced simplicial sub-meshes $\mathcal{T}_h$.
The convergence history with unfitted mesh is depicted in Figure~\ref{fig:convhist_unfitted}.
We also report convergence history with quasi-uniform polygonal meshes which is generated with pre-fixed generators as in Figure~\ref{fig:mesh}.
As expected from the observation made in Example~\ref{subsec:example2} and \ref{subsec:example3}, the proposed method gives optimally convergent numerical approximations for all the three variables.
Moreover, the accuracy of numerical approximations with unfitted meshes are similar to that with fitted meshes for bulk variables $p_h$ and $u_h$.
While the accuracy of $p_{\Gamma,h}$ with unfitted meshes is slightly lower than that with fitted meshes, considering the flexibility of the method, the difference is moderate.

\subsection{Quarter five-spot problem}\label{subsec:qfs}
\begin{figure}[t]
    \centering
    \includegraphics[width=0.45\textwidth]{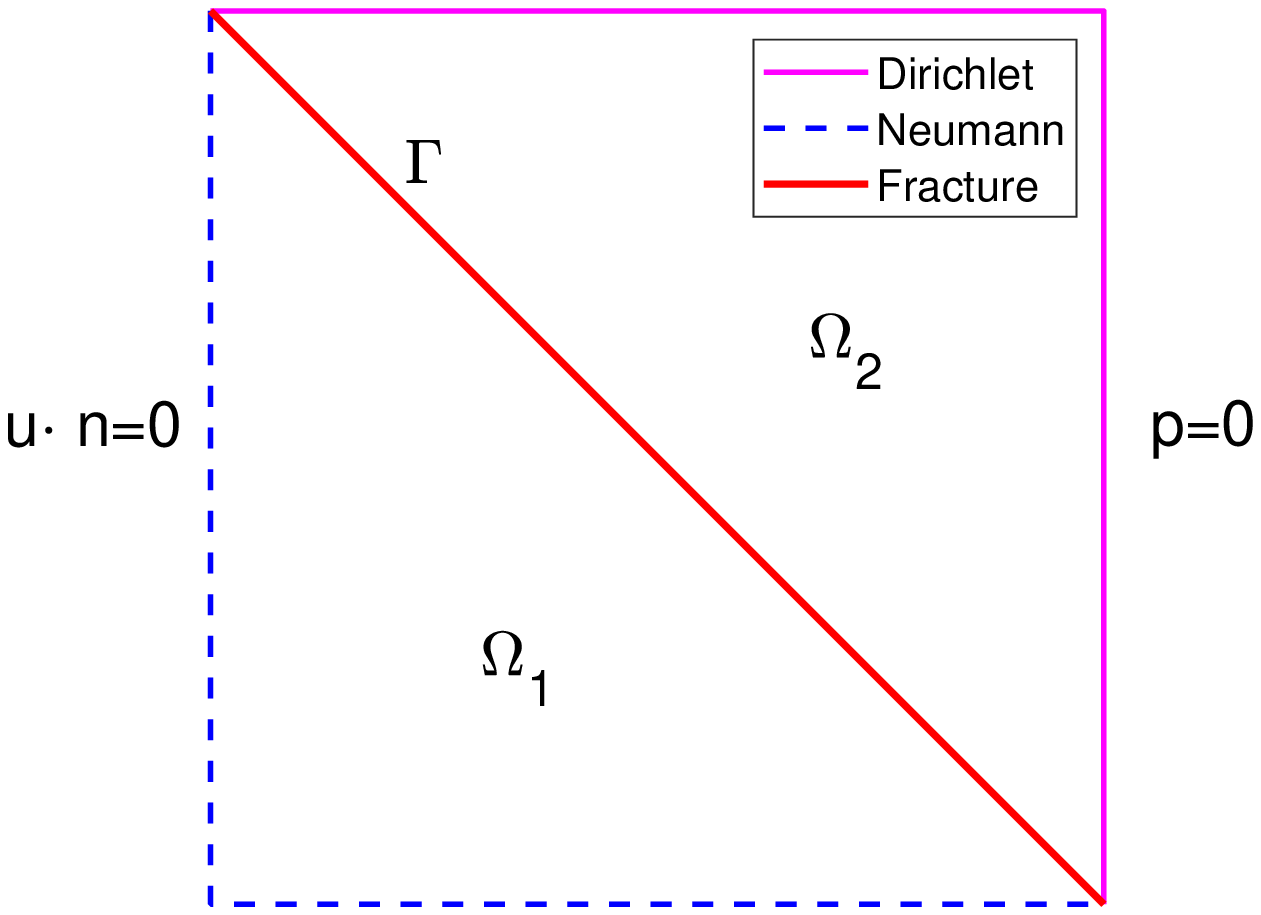}
    \includegraphics[width=0.45\textwidth]{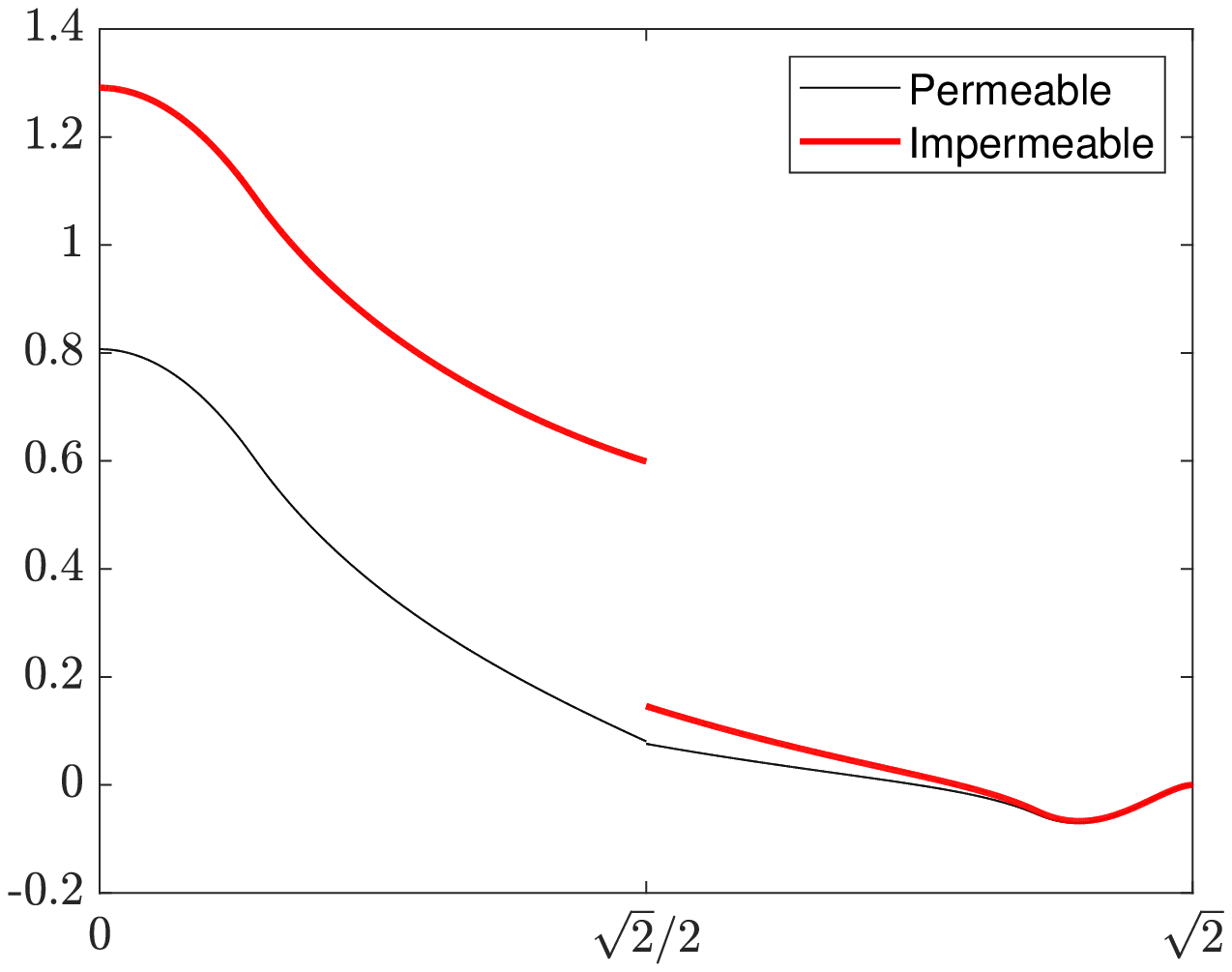}
    \caption{Domain configuration (left) and pressure profile along $x=y$ for Example~\ref{subsec:qfs}.\label{fig:qfs_prob}}
\end{figure}
\begin{figure}[t]
    \centering
    \includegraphics[width=0.45\textwidth]{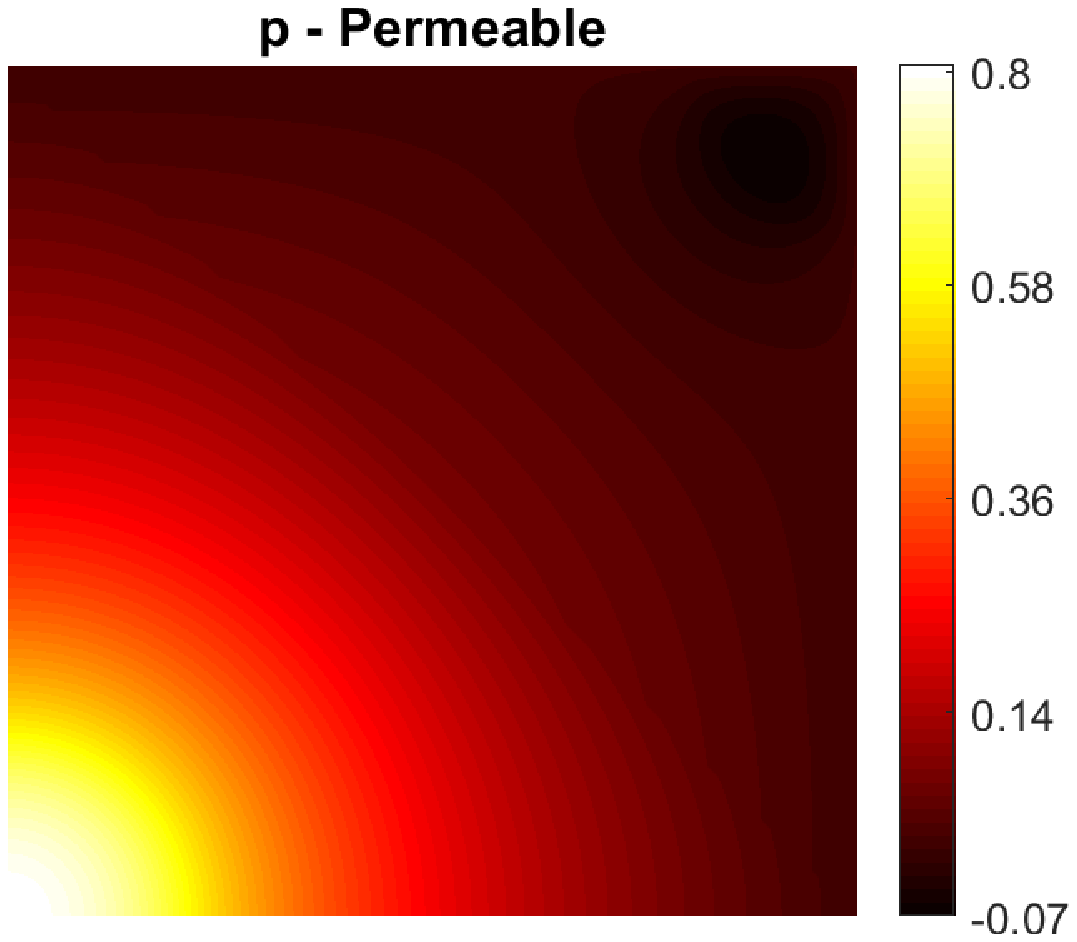}
    \includegraphics[width=0.45\textwidth]{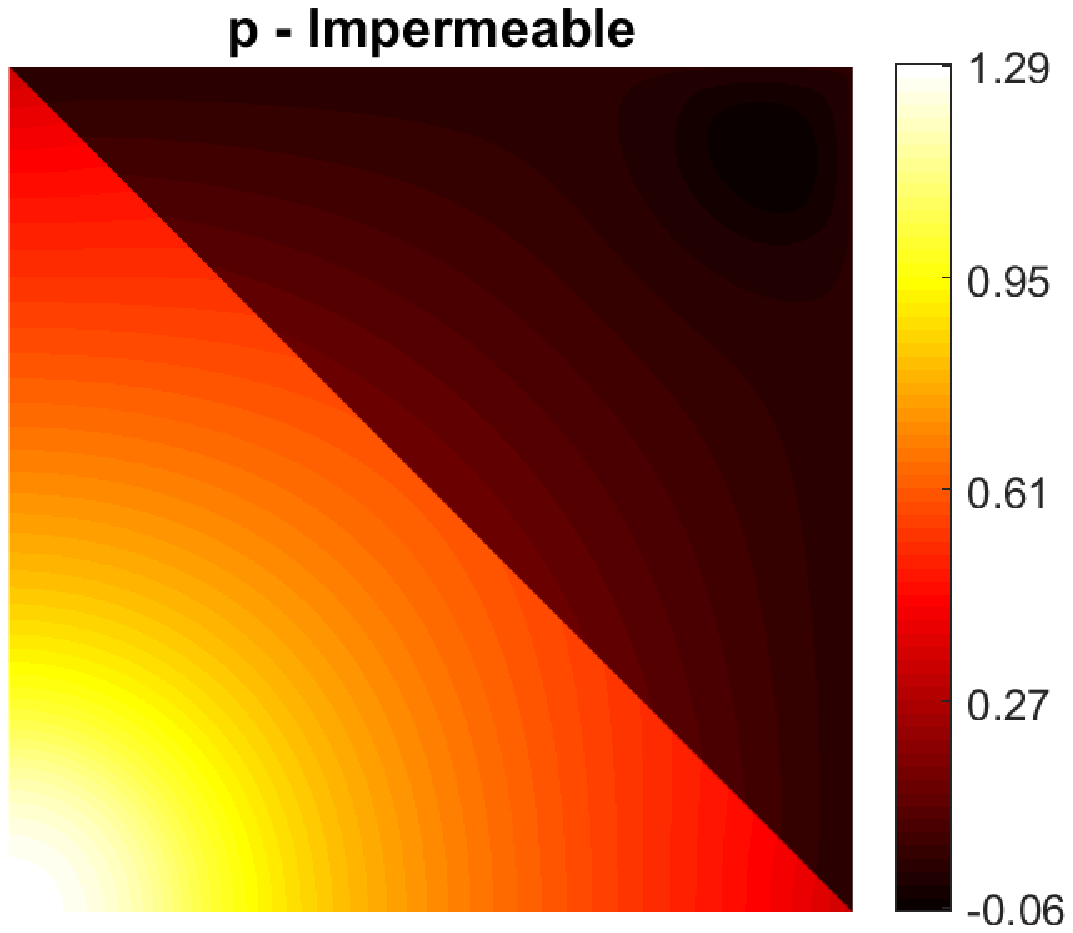}
    \caption{Pressure profile for Example~\ref{subsec:qfs} with permeable (left) and impermeable (right) fracture.\label{fig:qfs_p}}
\end{figure}
We conclude this section with a quarter five-spot problem.
A quarter five-spot problem emerges from petroleum engineering \cite{chen1998,petipjeans1999} and is frequently used to validate numerical algorithms \cite{Chave18, Antonietti19}.
Consider a unit square domain $\Omega$ with a diagonal fracture $\Gamma=\{(x,y):x+y=1\}$ with thickness $\ell_\Gamma=0.01$, see Figure~\ref{fig:qfs_prob}.
We set the boundary condition
\begin{equation*}
    \bm{u}\cdot\bm{n}=0\text{ on }\partial\Omega_1\backslash\Gamma,\quad p=0\text{ on }\partial\Omega_2\backslash\Gamma.
\end{equation*}
We model the injection and production by the source term
\begin{equation*}
    f = 10.1\Big(\tanh\left(200(0.2 - (x^2+y^2)^{\frac{1}{2}})\right)-\tanh\left(200(0.2-((x-1)^2+(y-1)^2)^{\frac{1}{2}})\right)\Big)
\end{equation*}
so that we have an injection well at $(0,0)$ and a production well at $(1,1)$.
Permeability for the bulk domain is chosen as $K=\bm{I}_{2\times 2}$.
As in \cite{Antonietti19}, we perform two numerical experiments: (1) Permeable fracture, $\kappa_\Gamma^n=1$ and $\kappa_\Gamma^*=100$; (2) Impermeable fracture, $\kappa_\Gamma^n=10^{-2}$ and $\kappa_\Gamma^*=1$.
The background mesh is chosen as a uniform rectangular mesh with $h\approx2^{-6}$ and we use cubic polynomials.
The bulk pressures are depicted in Figure~\ref{fig:qfs_p} and their profiles along the line $x=y$ are displayed in Figure~\ref{fig:qfs_prob}.

Both pressures with permeable and impermeable fractures have the largest value at the injection well $(0,0)$ and the smallest value at the projection well $(1,1)$.
The difference between the two pressure profiles are pronounced near the fracture.
Compared to the permeable fracture case, the impermeable fracture produces significant jump, see Figure~\ref{fig:qfs_prob} and \ref{fig:qfs_p}.
The profile produced with our method is qualitatively similar to that from \cite{Antonietti19}.

\section{Conclusion}\label{sec:conclusion}

In this paper we propose and analyze a staggered DG method combined with a standard conforming finite element method for the bulk and fracture allowing general polygonal elements even with arbitrarily small edges.
We impose the interface condition by replacing the average and jump terms with respect to the flux by the corresponding pressure term, which can guarantee the stability of the method.
The novel contributions of this paper are twofold.
First, convergence analysis allowing arbitrarily small edges is delivered, which sheds novel light on the analysis of staggered DG method for other physical problems.
Second, optimal flux $L^2$ error robust with respect to the heterogeneity and anisotropy of the permeability coefficients can be proved with the help of the Ritz projection. The numerical experiments presented indicate that our method is accurate, efficient and can handle anisotropic meshes without losing convergence order.
The proposed method has the flexibility of treating general meshes, which can be naturally adapted to solve problems on unfitted background grids.
%Furthermore, it can handle curved fracture well, and we will explore the convergence of our proposed method by considering the approximation of curve by polygonal line in our future work.

\section*{Acknowledgements}

The research of Eric Chung is partially supported by the Hong Kong RGC General Research Fund (Project numbers 14304217 and 14302018), CUHK Faculty of Science Direct Grant 2018-19 and NSFC/RGC Joint Research Scheme (Project number HKUST620/15). The research of Eun-Jae Park is supported by
NRF-2015R1A5A1009350 and NRF-2019R1A2C2090021.

%Impose the continuity of $L-pI$ and $u$ via the introduction of lagrange multiplier
%\begin{align*}
%(L_h,B_h)+(\bm{u}_h,\nabla B_h)-\sum_{e\in \mathcal{F}_{pr}}\int_e [u_h B \bm{n}]\;ds+\langle \lambda, [B n]\rangle_{\mathcal{F}_{dl}}=0,\\
%-(L, \nabla \bm{v})+\sum_{e\in \mathcal{F}_{dl}}\int_e [L\bm{n} \bm{v}]\;ds+(p,\nabla \cdot \bm{v})-\sum_{e\in \mathcal{F}_{dl}}\int_e [p \bm{v}\cdot\bm{n}]\;ds+\int_{e\in \mathcal{F}_{pr}}\int_e(\bm{n}\cdot\bm{n}_e)\bm{\chi}\cdot [\bm{v}_h]\;ds=(\bm{f},v),\\
%-(\bm{u},\nabla q)+\sum_{e\in \mathcal{F}_{pr}}[\bm{u}_h\cdot \bm{n} q]\;ds-(\lambda \cdot \bm{n},[q])_{\mathcal{F}_{dl}}=0
%\end{align*}

\end{document}